\newtheorem{thm}{Theorem}[section]
\newtheorem{prop}[thm]{Proposition}
\newtheorem{lem}[thm]{Lemma}
\theoremstyle{definition}
\theoremstyle{remark}
\newtheorem{rem}[thm]{Remark}
\newcommand{\K}{{\mathbb K}}
\newcommand{\D}{\text{D}}
\newcommand{\mapright}[1]{%
 \smash{\mathop{%
  \hbox to 1cm{\rightarrowfill}}\limits_{#1}}}
\newcommand{\maprightd}[2]{%
 \smash{\mathop{%
  \hbox to 1.2cm{\rightarrowfill}}\limits^{#1}\limits_{#2}}}
\newcommand{\mapleft}[1]{%
 \smash{\mathop{%
  \hbox to 1cm{\leftarrowfill}}\limits_{#1}}}
\newcommand{\mapleftu}[1]{%
 \smash{\mathop{%
  \hbox to 0.8cm{\leftarrowfill}}\limits^{#1}}}
\newcommand{\maprightu}[1]{%
 \smash{\mathop{%
  \hbox to 1cm{\rightarrowfill}}\limits^{#1}}}
\newcommand{\maprightud}[2]{%
 \smash{\mathop{%
  \hbox to 1cm{\rightarrowfill}}\limits^{#1}_{#2}}}
\newcommand{\mapleftud}[2]{%
 \smash{\mathop{%
  \hbox to 1cm{\leftarrowfill}}\limits^{#1}_{#2}}}
\newcounter{eqn}[section]
\def\theeqn{\textnormal{(\thesection.\arabic{eqn})}}
\def\eqnlabel#1{%
  \refstepcounter{eqn}%
  \label{#1}%
  \leqno{\theeqn}}
\begin{document}

\title[Behavior of the EMSS]{Behavior of the Eilenberg-Moore spectral sequence in derived string topology 
}

\footnote[0]{{\it 2010 Mathematics Subject Classification}: 55P35, 55T20 \\
{\it Key words and phrases.} String topology, relative loop homology, Poincar\'e duality space, Eilenberg-Moore spectral sequence. 


Department of Mathematical Sciences, 
Faculty of Science,  
Shinshu University,   
Matsumoto, Nagano 390-8621, Japan   
e-mail:{\tt kuri@math.shinshu-u.ac.jp} 

D\'epartement de Math\'ematiques
Facult\'e des Sciences,
Universit\'e d'Angers,
49045 Angers,
France 
e-mail:{\tt luc.menichi@univ-angers.fr}

Department of Mathematical Sciences, 
Faculty of Science,  
Shinshu University,   
Matsumoto, Nagano 390-8621, Japan   
e-mail:{\tt naito@math.shinshu-u.ac.jp} 
}

\author{Katsuhiko KURIBAYASHI, Luc MENICHI and Takahito Naito}
\date{}
\maketitle


\begin{abstract}
The purpose of this paper is to give applications of the Eilenberg-Moore type spectral sequence converging to 
the relative loop homology algebra of a Gorenstein space, which is introduced in the previous paper due to the authors. 
Moreover, it is proved that the spectral sequence is functorial 
on the category of simply-connected Poincar\'e duality spaces over a space.   
\end{abstract}

\section{Introduction}
This is a sequel to the paper \cite{K-M-N} due to the same three authors.  
In the previous paper, we have developed general theory of derived string topology, namely string topology on Gorenstein spaces due to 
F\'elix and Thomas \cite{F-T}. One of machinery in derived string topology is the Eilenberg-Moore spectral sequence (EMSS) converging 
the loop homology of a Gorenstein space.  
This paper aims at making explicitly computations of relative loop homology algebras 
of Poincar\'e duality spaces by employing the EMSS.  Moreover, we establish functoriality of the EMSS on appropriate categories. 

In what follows, the coefficients of the (co)homology 
and the singular cochain algebra of a space are in a field $\K$ unless otherwise explicitly stated. Moreover, 
it is assumed that a space has the homotopy type 
of a CW-complex whose homology with coefficients in an underlying field is of finite type. 

Let $f: N \to M$ be a map. By definition, the relative loop space $L_fM$, for which we may write $L_NM$, 
is fits into the pullback diagram
$$
\xymatrix@C30pt@R20pt{ 
L_fM \ar[r]^{} \ar[d] & M^I \ar[d]^{(ev_0,  ev_1)} \\
N \ar[r]_(0.4){(f, f)} & M\times M , 
}
$$
where $ev_t$ stands for the evaluation map at $t$. Suppose that $N$ is a simply-connected Poincar\'e duality space.   
Then the so-called loop product on $H_*(L_NM)$ makes the shifted homology ${\mathbb H}_*(L_NM) :=H_{*+\dim N}(L_NM)$ 
into an associative and unital algebra; see \cite[Remark 2.6 and Proposition 2.7]{K-M-N} and Proposition \ref{prop:unital} below. 
We denote by ${\mathbb H}_*(LM)$ the relative loop homology ${\mathbb H}_*(L_MM)$ if $f : M \to M$ is the identity map.  
Observe that ${\mathbb H}_*(LM)$ is nothing but the loop homology due to Chas and Sullivan \cite{C-S} 
when $M$ is a closed oriented manifold; see \cite{F-T}.   We see that the product on ${\mathbb H}_*(LM)$ is an 
extension of the {\it intersection product} on the shifted homology ${\mathbb H}_*(M):=H_{*+d}(M)$ even if $M$ is a 
Poincar\'e duality space; see Proposition \ref{prop:mor_alg} and the argument at the beginning of the Section 3.

The following theorem is a particular version of  \cite[Theorem 2.11]{K-M-N}. 

\begin{thm} 
\label{thm:loop_homology_ss} 
Let $N$ be a simply-connected Poincar\'e duality space of dimension $d$. 
Let $f : N\rightarrow M$ be a continuous map to a simply-connected space $M$.
Then the Eilenberg-Moore spectral sequence is a right-half plane cohomological
spectral sequence $\{{\mathbb E}_r^{*,*}, d_r\}$ converging to the Chas-Sullivan loop homology 
${\mathbb H}_*(L_NM)$  as an algebra with 
$$
{\mathbb E}_2^{*,*} \cong HH^{*, *}(H^*(M); {H}^*(N))
$$
as a bigraded algebra; that is, there exists a
decreasing filtration $\{F^p{\mathbb H}_*(L_N M)\}_{p\geq 0}$ of 
${\mathbb H}_*(L_N M)$ such that 
${\mathbb E}_\infty^{*,*} \cong Gr^{*,*}{\mathbb H}_*(L_N M)$ as a bigraded algebra, where 
$$
Gr^{p,q}{\mathbb H}_*(L_N M) = F^p{\mathbb
  H}_{-(p+q)}(L_NM)/F^{p+1}{\mathbb H}_{-(p+q)}(L_N M).
$$ 
Here $HH^{*,*}(H^*(M), H^*(N))$ denotes the Hochschild cohomology with the cup product.  
\end{thm}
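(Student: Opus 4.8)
The plan is to obtain the statement as a specialization of the general Eilenberg--Moore spectral sequence constructed in \cite[Theorem 2.11]{K-M-N}. Since a simply-connected Poincar\'e duality space of dimension $d$ is in particular a Gorenstein space of dimension $d$, the hypotheses of that theorem are met by the pullback square defining $L_NM$, and one obtains a multiplicative right-half plane cohomological spectral sequence converging to the relative loop homology algebra $\mathbb{H}_*(L_NM)$. As $M$ is simply connected, so is the base $M\times M$ of the path fibration $(ev_0, ev_1): M^I \to M\times M$, so the classical convergence theorem applies and yields the decreasing filtration $\{F^p\mathbb{H}_*(L_NM)\}$ together with strong convergence $\mathbb{E}_\infty^{*,*}\cong Gr^{*,*}\mathbb{H}_*(L_NM)$.

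First I would identify the $E_2$-term additively. Over the field $\K$ the K\"unneth map is an isomorphism, so the coefficient algebra is $H^*(M\times M)\cong H^*(M)\otimes H^*(M)$; because $H^*(M)$ is graded commutative this is precisely the enveloping algebra $H^*(M)^e$. Under this identification $H^*(M^I)\cong H^*(M)$ carries the standard bimodule structure induced by $(ev_0,ev_1)$, while $(f,f)$ equips $H^*(N)$ with the module structure for which $a\otimes b$ acts by multiplication by $f^*(a)f^*(b)$. Hence the $E_2$-term of the cohomology Eilenberg--Moore spectral sequence is $\operatorname{Tor}_{H^*(M)^e}(H^*(M), H^*(N))\cong HH_*(H^*(M); H^*(N))$. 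To reach the loop homology and the asserted Hochschild \emph{cohomology}, I would invoke the Poincar\'e duality of $N$: cap product with the fundamental class makes $H^*(N)$ a self-dual $H^*(M)$-bimodule up to a shift by $d$, so dualizing over $\K$ converts $HH_*(H^*(M); H^*(N))$ into $HH^{*,*}(H^*(M); H^*(N))$ and, simultaneously, the abutment $H^*(L_NM)$ into $\mathbb{H}_*(L_NM)=H_{*+d}(L_NM)$. This shift is exactly what the indexing $Gr^{p,q}\mathbb{H}_{-(p+q)}(L_NM)$ records.

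Next I would treat the multiplicative structure. The product carried by the spectral sequence comes from the diagonal of $N$ together with the Gorenstein (Poincar\'e duality) umkehr map used in \cite{K-M-N} to define the loop product; since this pairing respects the Eilenberg--Moore filtration, the abutment is $\mathbb{H}_*(L_NM)$ with the Chas--Sullivan loop product. On $E_2$ the induced product is the standard product on $\operatorname{Tor}$ over $H^*(M)^e$ determined by the algebra structures of $H^*(M)$ and $H^*(N)$, and the goal is to identify it with the Gerstenhaber cup product on $HH^{*,*}(H^*(M); H^*(N))$, so that $\mathbb{E}_2^{*,*}\cong HH^{*,*}(H^*(M); H^*(N))$ becomes an isomorphism of bigraded algebras.

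The hard part will be this last identification: showing that the multiplication induced on $E_2$ by the Eilenberg--Moore construction coincides with the cup product on the normalized Hochschild complex, and that the spectral sequence converges as an algebra to $\mathbb{H}_*(L_NM)$ with the loop product. This requires pushing the Gorenstein umkehr map through the bar construction and comparing the resulting pairing with the explicit Hochschild cup product at the chain level; by contrast, the additive identification of $E_2$ and the convergence are routine once \cite[Theorem 2.11]{K-M-N} is in hand.
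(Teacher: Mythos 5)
Your proposal takes essentially the same route as the paper: the paper offers no independent proof of this theorem, but obtains it exactly as you do, as a specialization of \cite[Theorem 2.11]{K-M-N} using the fact that a simply-connected Poincar\'e duality space of dimension $d$ is a $\K$-Gorenstein space of dimension $d$. Your sketch of the internals—dualizing the cohomological EMSS with $E_2\cong \text{Tor}_{H^*(M)\otimes H^*(M)}(H^*(N),H^*(M))$ and passing to Hochschild cohomology via cap product with the fundamental class of $N$—also agrees with the cited construction, which is the composite $\zeta = u\circ HH(1,\theta_{H^*(N)})$ that this paper reuses in Proposition \ref{prop:alg_maps} and diagram (3.4).
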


The original version of the theorem above is applicable to Gorenstein spaces whose class contain the classifying spaces of a connected Lie group,  
Noetherian H-spaces, Poincar\'e duality spaces and hence closed oriented manifolds; see \cite{FHT_G, KMnoetherian}. 
In this paper, we introduce an explicit calculation of the relative loop homology of a Poincar\'e duality space over a space.

In general, it is difficult to compute the Chas-Sullivan loop homolgy ${\mathbb H}_*(LM)$ because the shifted homology is not 
functorial with respect to a map between Poincar\'e duality spaces. 
On the other hand, an important feature of the relative version of the loop homology 
is that it gives rise to a functor between appropriate categories. This is explained below. 

Let $\text{\bf Poincar\'e}_M$ be the category of simply-connected based Poincar\'e duality  
spaces over $M$ and based maps; that is, a morphism from $\alpha_1 : N_1 \to M$ to  $\alpha_2 : N_2 \to M$ is a based map 
$f : N_1 \to N_2$ with $\alpha_1= \alpha_2\circ f$. 
Let $\text{\bf Top}_1^N$ be the category of simply-connected spaces under 
$N$.  We denote by $\text{\bf GradedAlg}_A$ and $\text{\bf GradedAlg}^A$ 
the categories of unital graded algebras over an algebra $A$ and of those under $A$, respectively. 
Assume that $N$ is a simply-connected Poincar\'e duality spaces. Then, as mentioned above,  the loop homology 
$\mathbb{H}_*(L_fM):=H_{*+\dim s(f)}(L_fM)$ comes with the loop product, where $s(f) = N$.
In consequence, 
our consideration in \cite{K-M-N}  permits us to deduce the following remarkable theorem. 

\begin{thm}\label{thm:functors}
{\em (1)} The loop homology gives rise to functors 
$$
\mathbb{H}_*(L_?M):=H_{*+\dim s(?)}(L_?M) :  \text{\bf Poincar\'e}_M^{op}
\to \text{\bf GradedAlg}_{H_*(\Omega M)}
$$
and 
$$
\mathbb{H}_*(L_N?):=H_{*+\dim N}(L_N?) :   \text{\bf Top}_1^N \to \text{\bf GradedAlg}^{{\mathbb H}_*(N)}.  
$$
Suppose further that $M$ is a simply-connected Poincar\'e duality space. Then one has a functor 
$$
\mathbb{H}_*(L_?M) : \text{\bf Poincar\'e}_M^{op}
\to \text{\bf GradedAlg}_{H_*(\Omega M)}^{\mathbb{H}_*(LM)}. 
$$
Here $\text{\bf GradedAlg}_{H_*(\Omega M)}^{\mathbb{H}_*(LM)}$ stands for the category of unital graded algebras 
over the algebra $H_*(\Omega M)$ with the Pontrjagin product and under the loop homology $\mathbb{H}_*(LM)$. 

{\em (2)} The multiplicative spectral sequence in Theorem  \ref{thm:loop_homology_ss} converging to the relative loop homology 
is natural with respect to morphisms in $\text{\bf Poincar\'e}_M$ and $\text{\bf Top}_1^N$; that is, for any morphism 
$\rho$ in $\text{\bf Poincar\'e}_M$ or $\text{\bf Top}_1^N$, there
exists a multiplicative morphism of the spectral sequences
such that the map between the associated bigraded algebras, which $\mathbb{H}_*(L_N?)(\rho)$ or  
$\mathbb{H}_*(L_?M)(\rho)$ gives rise to, coincides with the map on the $E_\infty$-terms up to isomorphism. 
\end{thm}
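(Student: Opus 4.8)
The plan is to reduce both assertions to the functoriality of the derived (cochain) model for the relative loop homology established in \cite{K-M-N}. Recall that there $\mathbb{H}_*(L_fM)$ is computed by an object of the form $\text{Ext}_{C^*(M)\otimes C^*(M)}(C^*(M), C^*(N))$, where $C^*(N)$ is regarded as a $C^*(M)$-bimodule via the cochain map $f^*=C^*(f) : C^*(M) \to C^*(N)$ while $C^*(M)$ carries its bimodule structure through the diagonal; the loop product is the cup product on this $\text{Ext}$, and the EMSS is the spectral sequence of the bar filtration of a semifree resolution of $C^*(M)$ over $C^*(M)^{\otimes 2}$, whose $E_2$-term is $HH^{*,*}(H^*(M); H^*(N))$ as in Theorem \ref{thm:loop_homology_ss}. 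With this model in hand, the two functors of part (1) and the naturality of part (2) will be produced by transporting morphisms of the relevant categories to morphisms of these filtered $\text{Ext}$-complexes.

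For the contravariant functor $\mathbb{H}_*(L_?M)$ on $\text{\bf Poincar\'e}_M^{op}$, I would argue as follows. Given a morphism $f:(N_1,\alpha_1)\to(N_2,\alpha_2)$, so that $\alpha_1=\alpha_2\circ f$, the cochain map $C^*(f):C^*(N_2)\to C^*(N_1)$ is a map of DG algebras, and since $\alpha_1^*=C^*(f)\circ\alpha_2^*$ it is moreover a map of $C^*(M)$-bimodules. Functoriality of $\text{Ext}_{C^*(M)^{\otimes 2}}(C^*(M),-)$ then yields a map $\mathbb{H}_*(L_{\alpha_2}M)\to\mathbb{H}_*(L_{\alpha_1}M)$, whose multiplicativity for the loop product follows from $C^*(f)$ being an algebra map, hence respecting the cup pairing on $\text{Ext}$. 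The degrees match, since the shriek map $f_!$ has degree $\dim N_1-\dim N_2$, exactly the shift relating the two shifted homologies, so the assignment is degree-preserving. Identities and composites are preserved because $C^*(-)$ is a functor, giving the functor. The $H_*(\Omega M)$-module structure is natural because the action factors through $\text{Tor}^{C^*(M)}(\K,\K)\cong H_*(\Omega M)$ and does not involve the bimodule $C^*(N)$, hence commutes with $C^*(f)$. For the covariant functor $\mathbb{H}_*(L_N?)$ on $\text{\bf Top}_1^N$ it is cleaner to argue geometrically: a morphism $g:(M_1,\alpha_1)\to(M_2,\alpha_2)$ under $N$ induces, by the universal property of the defining pullback, a map $L_Ng:L_NM_1\to L_NM_2$ covering $g$; as $N$ and hence the shift $\dim N$ is fixed, and both the concatenation and the Poincar\'e structure on $N$ used to build the loop product are untouched by $g$, the induced map $\mathbb{H}_*(L_NM_1)\to\mathbb{H}_*(L_NM_2)$ is a morphism of loop algebras, and the map induced by the constant-loop section $N\hookrightarrow L_NM_i$ supplies the natural "under $\mathbb{H}_*(N)$" structure. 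Finally, assuming $M$ is a Poincar\'e duality space, $(M,\mathrm{id})$ is terminal in $\text{\bf Poincar\'e}_M$, so each $(N,\alpha)$ admits the unique morphism $\alpha$ to it; applying the first functor produces the natural structure map $\mathbb{H}_*(LM)\to\mathbb{H}_*(L_NM)$ exhibiting the target under $\mathbb{H}_*(LM)$.

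For part (2) the key point is that every morphism above is realized by a filtration-preserving map of the complexes computing the EMSS. Thus $C^*(f)$, respectively the map of models induced by $L_Ng$, induces a morphism of spectral sequences $\{{}^{2}\mathbb{E}_r\}\to\{{}^{1}\mathbb{E}_r\}$ (resp.\ in the covariant direction). On $E_2$ this is the map $HH^{*,*}(H^*(M);H^*(f))$ (resp.\ $HH^{*,*}(H^*(g);H^*(N))$) on Hochschild cohomology, and it is multiplicative because the cup pairing on the bar complex is natural. Since the \emph{same} filtered map induces both the morphism of spectral sequences and the map $\mathbb{H}_*(L_?M)(\rho)$ of part (1), the filtration $\{F^p\}$ is preserved and the associated graded of $\mathbb{H}_*(\rho)$ agrees with the $E_\infty$-map under the isomorphism $\mathbb{E}_\infty\cong Gr^{*,*}\mathbb{H}_*(L_NM)$, which is the asserted compatibility.

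The main obstacle, as I see it, is twofold and lies in reconciling the geometric and algebraic descriptions. First, for the contravariant functor one must verify that the algebraic map on $\text{Ext}$ induced by $C^*(f)$ genuinely computes the geometric shriek map and is multiplicative for the loop product; because the product in \cite{K-M-N} is built from the coproduct/diagonal of $C^*(M)$ together with the Gorenstein (Poincar\'e) fundamental class, this forces one to track naturality at the chain or derived level through these structures, with particular care for the degree shift $\dim N_1-\dim N_2$ and the behaviour of fundamental classes under $f$. Second, for part (2) the delicate point is the naturality of the isomorphism $\mathbb{E}_\infty\cong Gr^{*,*}\mathbb{H}_*(L_NM)$ itself: one must ensure the identification of the abutment with the loop homology is compatible with the filtered maps on both sides, so that the a priori distinct maps---the $E_\infty$-map and the associated graded of $\mathbb{H}_*(\rho)$---coincide. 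Establishing these chain-level compatibilities, rather than the formal functoriality, is where the real work will be.
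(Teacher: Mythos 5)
Your plan has the right overall shape (and your covariant half, via the geometric map $L_Ng$, is essentially the paper's Proposition \ref{prop:alg_maps3}; likewise your observation that $(M,\mathrm{id})$ is terminal in $\text{\bf Poincar\'e}_M$ gives the under-$\mathbb{H}_*(LM)$ structure). But there is a genuine gap, and it sits exactly where you yourself point in your closing paragraph: your argument for the contravariant functor and for part (2) \emph{starts} from the assertion that $\mathbb{H}_*(L_fM)$ is computed, multiplicatively and naturally in $N$, by $\mathrm{Ext}_{C^*(M)\otimes C^*(M)}(C^*(M),C^*(N))$ with the cup product, and that the EMSS is the bar-filtration spectral sequence of that Ext. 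No such chain-level statement is available in \cite{K-M-N} or here: Theorem \ref{thm:loop_homology_ss} gives the Hochschild description only at the level of $E_2$, through the Poincar\'e duality isomorphism $\zeta = u\circ HH(1,-\cap[N])$, and the only product description one has is the torsion-functor description of $Dlp$ (\cite[Theorem 2.3]{K-M-N}), built from the shriek map $\Delta^!$. So the "recalled model" on which all of your formal steps ($\mathrm{Ext}(1,C^*(f))$ is multiplicative, the bar filtration is preserved, $E_2$-map is $HH(1,f^*)$) rest is precisely what has to be constructed, and constructing it \emph{naturally} is not formal: Poincar\'e duality $\theta_{H^*(N_i)}$ is not natural in $N$ (it transposes $f^*$ into a wrong-way map), so one is forced to introduce the shriek map $f^!\in\mathrm{Ext}^{d_2-d_1}$ of F\'elix--Thomas with $\theta_2^R\circ H(f^!)=H(f)^\vee\circ\theta_1^R$, define the functor by $\widetilde{F^!}=(-1)^{d_1(d_2-d_1)}(F^!)^\vee$, and prove compatibility with $Dlp$ using $\Delta^!f^!\simeq(f\times f)^!\Delta^!$, the Eilenberg--Zilber comparisons of \cite[Theorem 8.6]{K-M-N}, and careful sign bookkeeping. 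That is the actual content of the paper's Propositions \ref{prop:alg_maps} and \ref{prop:alg_maps2}, none of which appears in your proposal.

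The same deferral undermines part (2). The spectral sequence of Theorem \ref{thm:loop_homology_ss} is defined by dualizing the cohomological EMSS on the Tor side, so the morphism of spectral sequences must be induced by $f^!\otimes 1$ on the filtered torsion complex, and one must then identify its $E_2$-page with $HH(1,f^*)$ up to the isomorphisms $\zeta_i$ and the signs $(-1)^{d_i}$, $(-1)^{d_1(d_2-d_1)}$; this is Proposition \ref{prop:alg_maps}(ii), proved from the duality-transpose square above and \cite[Lemma 10.6]{K-M-N}. Two smaller omissions of the same kind: functoriality under composition requires the uniqueness of shriek maps (\cite{F-T}, \cite{L-S}), not just functoriality of $C^*(-)$; and unitality of $\mathbb{H}_*(L_NM)$ (needed even to land in categories of \emph{unital} algebras, and obtained in the paper by applying the functor to $*\to N$ together with Proposition \ref{prop:unital}, itself proved with the EMSS) is nontrivial in positive characteristic. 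In short, you have correctly located the hard points, but the proposal treats them as background to be recalled rather than as the theorem's proof, which is what they are.
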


If $N$ is a closed oriented smooth manifold, part (1) follows easily from
~\cite[Theorem 8, see also Corollary 9 and Proposition 10]{G-S}.
Using~\cite[Theorem 4]{F-T}, it is easy to extend \cite[Theorem 8]{G-S}
to Poincar\'e duality space. Therefore (1) can be proved easily.
But in order to prove part (2), we need to interpret (1) in term of differential torsion product described 
in \cite[Theorem 2.3]{K-M-N}; see the proof of Propositions~\ref{prop:alg_maps}. 
and~\ref{prop:alg_maps3}).

For a map $f : N \to M$ between simply-connected Poincar\'e duality spaces, 
Theorem \ref{thm:functors} enables one to obtain algebra maps 
$$
\xymatrix@C50pt@R20pt{ 
{\mathbb H}_*(L_NN) \ar[r]^{{\mathbb H}_*(L_N?)(f)} & 
{\mathbb H}_*(L_NM) & {\mathbb H}_*(L_MM) \ar[l]_{{\mathbb H}_*(L_?M)(f)}. 
}
$$
These maps provide tools to overcome the difficulty arising from the lack of functoriality in the loop homology.
For example, if $f$ is a smooth orientation preserving homotopy equivalence
between manifolds, in~\cite[Proposition 23]{G-S}, Gruher and Salvatore showed that these two algebras maps are isomorphisms and that their composite coincides
with ${\mathbb H}_*(Lf):{\mathbb H}_*(LN)\to {\mathbb H}_*(LM)$.

Furthermore, we are aware that naturality of the EMSS described in Theorem \ref{thm:functors} (2) plays an important role when determining 
the (relative) loop homology of a homogeneous space; see Proposition \ref{prop:cal2} below. 

The layout of this paper is as follows. In Section 2, 
by making use of the spectral sequence described in Theorem \ref{thm:loop_homology_ss}, 
we compute explicitly the Chas-Sullivan loop homology algebra of a Poincar\'e duality space whose cohomology is generated by a single element.  
Section 3 discusses a method for solving extension problems in the $E_\infty$-term of 
our spectral sequence. The naturality of the spectral sequence
described in Theorem \ref{thm:loop_homology_ss} is discussed in Section 4. 
We prove Proposition \ref{prop:cal2} in the end of the section.  
Section 5 is devoted to computations of the relative loop homology of a homogeneous space and 
a Poincar\'e duality space over $BS^1$.

\section{The EMSS calculations of the loop homology}

In this section, by using the spectral sequence in Theorem \ref{thm:loop_homology_ss} 
and the computation of the Hochschild cohomology of a graded commutative algebra, 
we determine explicitly the loop cohomology of a space whose cohomology is generated by a single element. 

We begin by recalling the definition of a Gorenstein space. 
Let $C^*(M)$ be a singular cochain algebra with coefficients in a field $\K$. By definition, a space $M$ is 
a $\K$-{\it Gorenstein space} of dimension $d$ \cite{FHT_G} 
if 
$$
\dim \text{Ext}_{C^*(M)}^*(\K, C^*(M)) = 
\left\{
\begin{array}{l}
0 \ \ \text{if} \ *\neq d,  \\
1  \ \ \text{if} \  *= d. 
\end{array}
\right.
$$ 
The spectral sequence $\{{\mathbb E}_r^{*,*}, d_r\}$ in Theorem \ref{thm:loop_homology_ss} 
is constructed by dualizing the EMSS $\{E_r, d_r\}$ converging to $H^*(LM)$; see \cite[The proof of Theorem 2.11]{K-M-N}. 
Therefore it is immediate that the EMSS $\{E_r^{*,*}, d_r\}$  collapses at the $E_2$-
term if and only if so does the EMSS $\{{\mathbb E}_r^{*,*}, d_r\}$. 
We thus establish the following theorem. 

\begin{thm}\label{thm:calculations}
Let $M$ be a simply-connected $\K$-Gorenstein space of positive dimension 
whose cohomology with coefficients in $\K$ is generated  by a single element of even degree. 
Then as an algebra,
$$
{\mathbb H}_{-*}(LM; \K) \cong  HH^*(H^*(M; \K), H^*(M; \K)). 
$$
\end{thm}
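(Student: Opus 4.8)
The plan is to invoke Theorem~\ref{thm:loop_homology_ss} in the special case $N = M$ (so that $f$ is the identity and $L_NM = LM$), which gives a multiplicative spectral sequence with
$$
{\mathbb E}_2^{*,*} \cong HH^{*,*}(H^*(M), H^*(M))
$$
converging to ${\mathbb H}_*(LM)$ as an algebra. Since $H^*(M;\K)$ is generated by a single element $x$ of even degree, it is a graded \emph{commutative} algebra with a very simple structure: either a polynomial algebra $\K[x]$ (if $M$ has the rational homotopy type related to $\mathbb{C}P^\infty$-like behavior, though here the Gorenstein hypothesis forces finiteness) or, more to the point for a Gorenstein space of positive dimension, a truncated polynomial algebra $\K[x]/(x^{n+1})$. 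The first main step is therefore to compute the Hochschild cohomology $HH^*(A,A)$ for such an $A$ explicitly and to record its internal bidegree, so that the $E_2$-term is completely understood as a bigraded algebra.

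The second and central step is to prove that the spectral sequence \emph{collapses} at the $E_2$-term, i.e. $\mathbb{E}_2 = \mathbb{E}_\infty$. This is where I expect the main obstacle to lie. The cleanest route is a bidegree (or ``diagonal'') argument: because $x$ has even degree, the Hochschild cohomology $HH^{*,*}(A,A)$ of the truncated polynomial (or polynomial) algebra is concentrated in bidegrees lying on a single diagonal family, and the differentials $d_r$ of a cohomological right-half-plane spectral sequence shift bidegree by $(r, 1-r)$. By comparing the parities/positions of the nonzero groups on the $E_2$-page one checks that every $d_r$ for $r \ge 2$ must vanish for bidegree reasons, so there is no room for nontrivial differentials. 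Concretely, the nonzero classes on $E_2$ are indexed by the polynomial generators of $HH^*$ together with a finite exterior/divided-power part, and the total-degree constraint forces sources and targets of $d_r$ never to be simultaneously nonzero.

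Granting collapse, the remaining work is to resolve the multiplicative extension problem, that is, to upgrade the isomorphism ${\mathbb E}_\infty^{*,*} \cong Gr^{*,*}{\mathbb H}_*(LM)$ of associated graded algebras to an isomorphism of algebras
$$
{\mathbb H}_{-*}(LM;\K) \cong HH^*(H^*(M;\K), H^*(M;\K)).
$$
Here one uses that, for a single even generator, the associated bigraded algebra $\mathbb{E}_\infty \cong HH^*$ is free (polynomial, or polynomial tensor an exterior/truncated factor in characteristic constraints) on generators living in distinct filtrations, so the filtration splits multiplicatively and no nontrivial extensions can occur. The indexing bookkeeping — matching the cohomological internal grading $*$ of the Hochschild complex against the homological grading of ${\mathbb H}_{-*}(LM)$, which is responsible for the sign/direction $-*$ in the statement — must be carried out carefully, since $\{\mathbb{E}_r\}$ is obtained by dualizing the cohomological EMSS $\{E_r\}$ converging to $H^*(LM)$ as remarked before the statement. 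I would verify the collapse equivalently on the cohomological side $\{E_r\}$ (which collapses iff $\{\mathbb{E}_r\}$ does, as noted in the text) if the bidegree count is more transparent there, and then dualize.
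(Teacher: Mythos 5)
Your skeleton --- specialize Theorem \ref{thm:loop_homology_ss} to $N=M$, reduce to $A=\K[x]/(x^{n+1})$ since positive Gorenstein dimension excludes the polynomial case, compute $HH^*(A,A)$ with its bidegrees, prove collapse, then solve extensions --- is the same as the paper's. On collapse you diverge in method: the paper's Lemma \ref{lem:collapsing} quotes the proof of \cite[Theorem 2.2]{K-Y}, while you propose a direct bidegree argument. Your route can in fact be made to work: the algebra generators $x$, $u$ (resp.\ $v$) and $t$ of ${\mathbb E}_2$ cannot support any $d_r$ with $r\ge 2$ for bidegree reasons, and since the $d_r$ are derivations of a multiplicative spectral sequence this forces collapse. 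But it must actually be carried out, and it is not the one-line parity check you describe: the $E_2$-term is not concentrated on ``a single diagonal family'' (the monomials $x^iu^jt^k$ occupy all columns $p=j+2k$), and the arithmetic has near misses --- for instance, when $n+1\equiv 0$ mod ${\rm ch}(\K)$, the bidegree that a $d_1$ on $v$ would target is occupied by the nonzero class $x^nt$, so the vanishing pattern is tight and genuinely needs $r\ge 2$ together with the constraints $0\le i\le n$, $m\ge 1$.

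The genuine gap is your final step. You assert that ${\mathbb E}_\infty\cong HH^*(A,A)$ is free on generators living in distinct filtrations, so that the filtration ``splits multiplicatively and no nontrivial extensions can occur.'' This is false, and it skips what is in the paper the bulk of the proof. By Theorem \ref{thm:Yang_main}, ${\mathbb E}_\infty$ is $\K[x,u,t]/(x^{n+1},u^2,x^nt,ux^n)$, or $\K[x,v,t]/(x^{n+1},v^2)$, or in characteristic $2$ the algebra $\K[x,v,t]/(x^{n+1},v^2-\frac{n+1}{2}tx^{n-1})$; none of these is free, and the multiplicative extension problem is precisely whether each defining relation holds on the nose in ${\mathbb H}_{-*}(LM)$ or only modulo higher filtration. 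For example, $u^2$ lies in $F^2$ and its class in $Gr^2$ vanishes, so a priori $u^2$ could be a nonzero element of $F^3$; ``generators in distinct filtrations'' says nothing about this possibility. The paper rules out every such correction term in Theorems \ref{thm0.1}, \ref{thm0.2} and \ref{thm0.3}: one writes a hypothetical nonzero correction $\sum\alpha_{ijk}x^iu^jt^k$ of higher filtration, uses parity of total degrees to pin down $j$, and derives a numerical contradiction from the two constraints (equality of total degrees, filtration at least one more than that of the product in question). That explicit arithmetic, which is where the hypotheses $m\ge 1$, $i\le n$, $k\ge 2$ enter, has no substitute in your proposal; without it the isomorphism of algebras is unproven. (A further small omission: the case $A=\wedge(x)$, i.e.\ $n=1$ in characteristic $2$, falls outside Yang's cases with $n+1\ge 3$ and is treated separately as Theorem \ref{thm0.4}, where collapse and the absence of extensions do follow from purely dimensional reasons.)
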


\begin{rem}\label{rem:poly}
Suppose that $M$ is a simply-connected space whose cohomology with coefficients in a field $\K$ 
is a finitely generated polynomial algebra, say $H^*(M) \cong \K[x_1, ..., x_n]$. Let ${\mathbb H}_*(LM)$ denote 
the shifted homology $H_{*-d}(LM)$, where $d = -\sum_{i=1}^n(\deg x_i -1)$. 
Observe that $M$ is a Gorenstein space of dimension $d$ as seen in Remark \ref{rem:Gorenstein} below. We have  
$$
{\mathbb H}_{*}(LM; \K)^\vee \cong HH^*(H^*(M), H^*(M))
$$ 
as a graded vector space. In fact, by using the Eilenberg-Moore spectral sequence converging to 
$H^*(LM)$ with $E_2^{*,*} \cong \text{Tor}_{H^*(M)\otimes H^*(M)}(H^*(M), H^*(M))$, we see that   
$$
({\mathbb H}_{*}(LM))^\vee = (H_{*-d}(LM))^\vee \cong H^{*-d}(LM)) 
\cong (\K[x_1, ..., x_n]\otimes  \wedge(u_1, ..., u_n))^{*-d}
$$
as graded vector spaces, where 
$\deg u_i = \deg x_i-1$. 
Moreover, it follows from  \cite[Theorem 1.1]{Kuri2011} that 
$$
HH^*(H^*(M), H^*(M))\cong HH^*(C^*(M), C^*(M))\cong \K[x_1, ..., x_n]\otimes \wedge(u_1^*, ...., u_n^*)
$$
as algebras, where $\deg u_i ^*= -(\deg x_i-1)$. 
We define a map 
$$\eta : HH^*(H^*(M), H^*(M)) \to H^{*-d}(LM)$$ by 
$$
\eta(x_{i_1}\cdots x_{i_s}u_{j_1}^*\cdots u_{j_t}^*) = x_{i_1}\cdots x_{i_s}u_1 \cdots \widehat{u_{j_1}} 
\cdots \widehat{u_{j_t}}\cdots u_n, 
$$
where $\widehat{u_j}$ means deletion of the element $u_j$ from the representation.  
Then it is readily seen that $\eta$ is an isomorphism of graded vector spaces. See \cite[Section 9]{Menichi} 
for such an isomorphism in more general setting. 
\end{rem}
\begin{rem}
\label{rem:Gorenstein}
Let $M$ be the same space as in Remark \ref{rem:poly}. 
Then $M$ is 
a $\K$-Gorenstein space of dimension $d=-\sum_{i=1}^n(\deg x_i-1)$. 
In fact, since $M$ is a $\K$-formal, it follows that 
\begin{eqnarray*}
\text{Ext}_{C^*(M)}^*(\K, C^*(M)) &\cong& \text{Ext}_{H^*(M)}^*(\K, H^*(M)) \\
&\cong&
(\otimes_{i=1}^{n}\text{Ext}_{\K[x_i]}^*(\K, \K[x_i]))^* = \left\{
\begin{array}{l}
\K \ \ \text{if} \ *\neq d,  \\
0  \ \  \ \text{if} \  *= d. 
\end{array}
\right. 
\end{eqnarray*}
The result \cite[Theorem 6.10]{F-H-T} allows us to obtain the first isomorphism. 
The proof of \cite[(4.6)]{FHT_G} gives us the second one.  

We can choose a shriek map 
$$\Delta^! \in \text{Ext}_{C^*(M^{\times 2})}^d(C^*(M), C^*(M^{\times 2}))=
\text{Ext}_{C^*(M^{\times 2})}^d(C^*(M^I), C^*(M^{\times 2}))=H^0(M)
$$
so that $H(\Delta^!)$ is the integration along the fibre of the fibration 
$\Omega M \to M^I \to M^{\times 2}$. Thus ${\mathbb H}_{*}(LM)^\vee \cong H^{*-d}(LM)$ is endowed with 
$Dlcop$ the dual to the loop coproduct defined in the Section 1. From Remark \ref{rem:poly}, 
one might expect that, as an algebra,  ${\mathbb H}_{*}(LM)^\vee$ is isomorphic to $HH^*(H^*(M), H^*(M))$.
The consideration of such an isomorphism is  one of main topics in \cite{K-L}. 
We also mention that the dual to the loop product on 
${\mathbb H}_{*}(LM)^\vee$ is trivial; 
see \cite{K-L} for more details. 
\end{rem}

As seen in Remark \ref{rem:Gorenstein}, 
a simply-connected space $M$ is a $\K$-Gorenstein space of negative degree 
if the cohomology $H^*(M; \K)$ is a polynomial algebra. 
Then in order to prove Theorem \ref{thm:calculations}, it suffices to consider the case where $H^*(M; \K)$ is a truncated polynomial  algebra and hence 
$M$ is a Poincar\'e duality algebra; see \cite[Theorem 3.1]{FHT_G}
Let $\{{\mathbb E}_r^{*,*}, d_r\}$ be the EMSS converging to ${\mathbb H}_{-*}(LM; \K)$. 
We first observe the following fact. 

\begin{lem} \label{lem:collapsing} Suppose that $H^*(M; \K)$ is a truncated polynomial algebra generated by 
a single element. Then the EMSS 
$\{{\mathbb E}_r^{*,*}, d_r\}$ collapses at the $E_2$-term. 
\end{lem}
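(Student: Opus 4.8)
The plan is to work with the cohomological EMSS $\{E_r^{*,*},d_r\}$ converging to $\h{LM}$ rather than with its dual $\{{\mathbb E}_r^{*,*},d_r\}$: by the observation just before the lemma the two collapse simultaneously, and by Theorem \ref{thm:loop_homology_ss} the sequence $E_2^{*,*}\cong HH^{*,*}(\h M,\h M)$ carries the cup product, so it is multiplicative. Writing $\h M\cong\K[x]/(x^{n+1})$ with $x$ in even degree, I would first record $E_2$ explicitly from the $2$-periodic free resolution of $\K[x]/(x^{n+1})$ over its enveloping algebra, whose differentials are alternately multiplication by $x\otimes 1-1\otimes x$ and by $\sum_{i}x^i\otimes x^{n-i}$. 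This presents $E_2$ as an algebra generated by $HH^0=\h M$ together with one class $\xi$ in Hochschild degree $1$ (the suspension of $x$) and one periodicity class $\chi$ in Hochschild degree $2$, subject to $x^{n+1}=0$, $x^n\xi=0$ and $x^n\chi=0$.

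Since the sequence is multiplicative, it suffices to show that these algebra generators are permanent cycles. For $HH^0=\h M$ this is forced by the constant-loop section of the evaluation $ev_0:LM\to M$: it splits $ev_0^*$, so the edge column $E_*^{0,*}$ consists of permanent cycles and the filtration-zero part of $\h{LM}$ is exactly the image of $\h M$. The class $\xi$ I would handle by naturality: the generator $x$ pulls back along a classifying map $M\to BS^1$, the EMSS for $BS^1$ collapses because $HH^{*,*}(\K[x])\cong\K[x]\otimes\Lambda(\xi)$ is concentrated in Hochschild degrees $0$ and $1$, and both $x$ and $\xi$ are then images of permanent cycles under the induced morphism of spectral sequences.

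The main obstacle is the periodicity class $\chi\in HH^2$. It is \emph{not} in the image of the map from the EMSS of $BS^1$, since $HH^*(\K[x])$ vanishes above Hochschild degree $1$, so naturality alone does not settle it. To prove $d_r\chi=0$ I would combine multiplicativity with the relation $x^n\chi=0$: applying $d_r$ and using $d_r x=0$ gives $x^n d_r\chi=0$, and together with a count of the internal degrees available in $E_r^{2+r,*}$ this should force $d_r\chi=0$ for every $r$; then multiplicativity yields $d_r\equiv 0$ and the collapse of $\{E_r\}$, hence of $\{{\mathbb E}_r\}$. An alternative that avoids analysing $\chi$ is a dimension count: $E_\infty$ is a subquotient of $E_2=HH^{*,*}(\h M,\h M)$ with $\dim_\K E_\infty=\dim_\K\h{LM}$ in each total degree, and a formality-type identification $\h{LM}\cong HH^*(C^*(M),C^*(M))\cong HH^*(\h M,\h M)$ in the spirit of \cite{Kuri2011} would give $\dim_\K E_2=\dim_\K E_\infty$, again forcing collapse. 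The crux in this second route is establishing that identification over an arbitrary field $\K$.
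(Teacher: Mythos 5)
The paper's own proof of Lemma \ref{lem:collapsing} is a one-line citation: the collapse of the cohomological EMSS $\{E_r,d_r\}$ for a simply-connected space whose cohomology is a truncated polynomial algebra on one generator is exactly \cite[Theorem 2.2]{K-Y}, and the dual spectral sequence $\{{\mathbb E}_r^{*,*},d_r\}$ collapses with it. Your opening reduction (pass to $\{E_r,d_r\}$ by the duality observation, then exploit multiplicativity) is the same first move, but you then set out to reprove the cited theorem, and the argument you sketch has genuine gaps.

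First, your presentation of $E_2$ is only correct in Yang's case (i), i.e.\ when $n+1\not\equiv 0$ modulo ${\rm ch}(\K)$. When ${\rm ch}(\K)$ divides $n+1$, Theorem \ref{thm:Yang_main}(ii),(iii) gives $\K[x,v,t]/(x^{n+1},v^2)$, resp.\ $\K[x,v,t]/(x^{n+1},v^{2}-\frac{n+1}{2}tx^{n-1})$: the relations $x^n\xi=0$ and $x^n\chi=0$ that you rely on do not hold there, and the Hochschild-degree-one generator sits in bidegree $(1,-2m)$, not $(1,0)$ (Remark \ref{rem:bidegree}). So your relation trick for $\chi$ is unavailable in those cases, and any internal-degree count has to be run against the correct, case-dependent $E_2$. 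Second, the naturality step for $\xi$ via a classifying map $M\to BS^1$ only makes sense when $|x|=2$: for $|x|=2m>2$ there is in general no space at all with $\K$-cohomology $\K[x']$ on a generator of that degree (in characteristic $2$ only degrees $1,2,4$ are realizable, so the step already fails for the Cayley plane, where $|x|=8$ and $n=2$), and even when such a space exists (e.g.\ $BS^3$ for $2m=4$) it is not an Eilenberg--MacLane space, so $x$ need not be induced by any map. Third, and decisively, the step on which the whole proof turns---$d_r\chi=0$ for every $r$---is asserted, not proved: ``should force'' is precisely where the content of the lemma lies. (The count can in fact be closed: parity of the internal degree rules out targets of the form $x^it^k$, and the coprimality of $m$ with $m(n+1)-1$ forces the exponent $i$ in any candidate $x^iut^k$ or $x^ivt^k$ beyond $n$; but this must be checked in all three cases and none of it is in your text.) Finally, your fallback dimension count rests on an identification $H^*(LM)\cong HH^*(H^*(M),H^*(M))$ over an arbitrary field that you yourself flag as unestablished; note that \cite[Theorem 1.1]{Kuri2011}, quoted in Remark \ref{rem:poly}, concerns polynomial---not truncated polynomial---cohomology, so it cannot be invoked here. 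In short, the skeleton (multiplicativity plus permanent-cycle generators, with the zero column handled by the constant-loop section as in Remark \ref{rem:cycles}) is sound, but the two steps that would turn it into a proof are, respectively, wrong in general and missing; the paper sidesteps both by quoting \cite{K-Y}.
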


\begin{proof}
The proof of \cite[Theorem 2.2]{K-Y} implies that the EMSS $\{E_r, d_r\}$ collapses at the $E_2$-term 
and hence so does $\{{\mathbb E}_r^{*,*}, d_r\}$; see also \cite[Remark 2.6]{K-Y}. 
\end{proof}

We are left to compute the $E_2$-term and to solve all extension problems on ${\mathbb E}_\infty^{*,*}$.  

Let $\K$ be an arbitrary field and $A$ a truncated polynomial algebra of the form 
$\K[x]/ (x^{n+1})$, where $|x|=2m$.

We recall here the calculations of the Hochschild cohomology ring of $A$ due to Yang \cite{Yang}. 
In what follows,  let ${\rm ch} (\K)$ stand for the characteristic of a field $\K$. 

\begin{thm}[{\cite[Theorems 4.6 , 4.7 and 4.8]{Yang}}]\label{thm:Yang_main} {\rm (i)}
If  $n+1\not\equiv 0$ modulo ${\rm ch}(\K)$, then  
$$
HH^{*}(A;A)\cong {\mathbb K}[x,u,t]/(x^{n+1},u^{2},x^{n}t,ux^{n})
$$
as a graded algebra, where $|x|=2m$, $|u|=1$ and $|t|=-2m(n+1)+2$. \\
{\rm (ii)} If ${\rm ch} (\K)\neq 2$ and $n+1\equiv 0$ modulo ${\rm ch} (\K)$, then
$$
HH^{*}(A;A)\cong\K[x,v,t]/(x^{n+1},v^{2})
$$
as a graded algebra, where $|x|=2m$, $|v|=-2m+1$ and $|t|=-2m(n+1)+2$. \\
{\rm (iii)} If ${\rm ch} (\K)=2$ and $n$ is odd, then 
$$
HH^{*}(A;A)\cong\K[x,v,t]/(x^{n+1},v^{2}-\frac{n+1}{2}tx^{n-1})
$$
as a graded algebra, where $|x|=2m$, $|v|=-2m+1$ and $|t|=-2m(n+1)+2$.
Especially, when $n=1$, as a graded algebra,
$$
HH^{*}(A;A)\cong\K[x,v,t]/(x^{2},v^{2}-t)\cong \wedge (x)\otimes \K [v].
$$
\end{thm}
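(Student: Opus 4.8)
The plan is to compute $HH^{*}(A;A)$ directly from a small free resolution of $A$ over its enveloping algebra $A^{e}=A\otimes A$, rather than through the bar construction. Writing $x_{1}=x\otimes 1$ and $x_{2}=1\otimes x$, one has $A^{e}\cong \K[x_{1},x_{2}]/(x_{1}^{n+1},x_{2}^{n+1})$, and since $A$ is defined by the single equation $x^{n+1}=0$ it is a complete intersection over $\K$, so its minimal $A^{e}$-free resolution is $2$-periodic. Concretely, with
$$p=x_{1}-x_{2},\qquad q=\sum_{i=0}^{n}x_{1}^{\,i}x_{2}^{\,n-i}\in A^{e},$$
one has $pq=x_{1}^{n+1}-x_{2}^{n+1}=0$, and the first step is to verify that
$$\cdots \xrightarrow{\ \cdot q\ } A^{e}\xrightarrow{\ \cdot p\ }A^{e}\xrightarrow{\ \cdot q\ }A^{e}\xrightarrow{\ \cdot p\ }A^{e}\xrightarrow{\ \mu\ }A\to 0$$
is a free resolution $P_{\bullet}\to A$, where $\mu$ is the product. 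Keeping track of the internal grading, the generator of $P_{s}$ carries the shift $2m(n+1)\lfloor s/2\rfloor+2m\,(s\bmod 2)$; this is the bookkeeping that will produce the degrees $|t|=-2m(n+1)+2$, $|u|=1$ and $|v|=-2m+1$.

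Applying $\mathrm{Hom}_{A^{e}}(-,A)\cong A$ collapses the resolution to the cochain complex
$$A\xrightarrow{\ 0\ }A\xrightarrow{\ (n+1)x^{n}\ }A\xrightarrow{\ 0\ }A\xrightarrow{\ (n+1)x^{n}\ }\cdots,$$
since $\mu(p)=0$ and $\mu(q)=(n+1)x^{n}$. The additive structure is then immediate and splits into the two regimes of the statement. If $n+1\not\equiv 0$ (case (i)), multiplication by $(n+1)x^{n}$ has kernel the ideal $(x)$ and image $\K x^{n}$, so $HH^{0}=A$, $HH^{\mathrm{odd}}=(x)$ and $HH^{\mathrm{even}\ge 2}=A/\K x^{n}$; the odd generator is the class $u$ of the cochain $1\mapsto x$, and the periodicity class $t\in HH^{2}$ is the cochain $1\mapsto 1$. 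If $n+1\equiv 0$ (cases (ii) and (iii)), every differential vanishes, so $HH^{s}\cong A$ for all $s$, the odd generator $v\in HH^{1}$ is now the full fundamental class $1\mapsto 1$, and $t\in HH^{2}$ is again $1\mapsto 1$. Matching the dimensions in each Hochschild degree against the presentations in (i)--(iii) — for instance against $A\otimes\wedge(v)\otimes\K[t]$ in cases (ii) and (iii) — is a consistency check on the number of generators.

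For the multiplicative structure I would compute the cup product at the chain level through a diagonal approximation, i.e. an $A^{e}$-chain map $\Delta\colon P_{\bullet}\to (P\otimes_{A}P)_{\bullet}$ lifting $\mathrm{id}_{A}$, so that $f\smile g=(f\otimes g)\circ\Delta$ on cocycles. Two families of relations are cheap: the relations $x^{n+1}=0$, $x^{n}t=0$ and $ux^{n}=x^{n+1}=0$ all follow from the $A$-module structures on $HH^{0}$, $HH^{2}$ and $HH^{1}$, since $(x)$ and $A/\K x^{n}$ are cyclic $A$-modules generated by $u$ and $t$. Using $\Delta$, multiplication by $t$ realizes the $2$-periodicity of $P_{\bullet}$, exhibiting $t$ as a polynomial generator that carries $HH^{2k}$ and $HH^{2k+1}$ onto $t^{k}HH^{0}$ and $t^{k}HH^{1}$; this shows that $x,u,t$ (resp. $x,v,t$) generate the whole ring. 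Finally, the squares $u^{2}=0$ (case (i)) and $v^{2}=0$ (case (ii)) follow from graded-commutativity of the cup product whenever $\mathrm{ch}\,\K\neq 2$, since then $2u^{2}=0$ resp. $2v^{2}=0$.

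The hard part will be the two computations in characteristic $2$, where graded-commutativity says nothing about the square of an odd class. In case (i) with $\mathrm{ch}\,\K=2$ (so $n$ is even) one must show directly from $\Delta$ that $u^{2}=0$, while in case (iii) one must evaluate $v^{2}$ and obtain exactly $v^{2}=\tfrac{n+1}{2}\,t\,x^{n-1}$, where $\tfrac{n+1}{2}$ is the integer taken before reduction modulo $2$. The coefficient is pinned down by tracking which of the summands $x_{1}^{\,i}x_{2}^{\,n-i}$ of $q$ survive under $\Delta$; for $n=1$ this forces $v^{2}=t$, whence $HH^{*}(A;A)\cong\wedge(x)\otimes\K[v]$. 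I expect essentially all of the genuine difficulty to lie in writing down a correct diagonal $\Delta$ and carrying out this coefficient bookkeeping; once $u^{2}$ and $v^{2}$ are known, the presentations (i)--(iii) drop out by comparing the resulting graded algebra with the additive answer in each bidegree.
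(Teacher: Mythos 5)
Your strategy is in fact the same one underlying the cited result: the paper gives no proof of this theorem (it is quoted from Yang), and, as Remark \ref{rem:bidegree} records, Yang's computation also rests on the $2$-periodic $A^e$-free resolution; your grading bookkeeping correctly reproduces the bidegrees $\mathrm{bideg}\,x=(0,2m)$, $\mathrm{bideg}\,u=(1,0)$, $\mathrm{bideg}\,v=(1,-2m)$, $\mathrm{bideg}\,t=(2,-2m(n+1))$ stated there. Your resolution is the standard (correct) one, the induced complex $A\xrightarrow{0}A\xrightarrow{(n+1)x^{n}}A\xrightarrow{0}\cdots$ and the resulting additive answer are right in all cases, the relations $x^{n+1}=0$, $x^{n}t=0$, $ux^{n}=0$ do follow from the $HH^{0}$-module structure alone, and graded commutativity does kill $u^{2}$ and $v^{2}$ when $\mathrm{ch}\,\K\neq 2$.

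There is, however, a genuine gap exactly where the theorem has content that degree and commutativity arguments cannot reach: you never produce the diagonal approximation $\Delta$, so your argument establishes neither that cup product by $t$ implements the periodicity (on which your generation claim rests) nor the characteristic-$2$ squares, namely $u^{2}=0$ in case (i) with $n$ even and $v^{2}=\frac{n+1}{2}tx^{n-1}$ in case (iii). Those equalities are precisely what distinguishes (iii) from (ii), and ``I expect the difficulty to lie in the coefficient bookkeeping'' is a plan, not a proof. To close the gap, write $P_{s}=A^{e}e_{s}$ and check that $\Delta(e_{1})=e_{1}\otimes e_{0}+e_{0}\otimes e_{1}$ and that $\Delta(e_{2})$ can be chosen with $(1,1)$-component $\sum_{i+j+k=n-1}x^{i}e_{1}x^{j}\otimes e_{1}x^{k}$ (the truncated-polynomial analogue of the classical diagonal for cyclic groups). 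Evaluating $v\otimes v$ on this component gives $v^{2}=\binom{n+1}{2}x^{n-1}t$, and for $n$ odd one has $\binom{n+1}{2}=\tfrac{n(n+1)}{2}\equiv\tfrac{n+1}{2}\pmod 2$, which is the asserted relation (for $n=1$ this is $v^{2}=t$); the same formula gives $u^{2}=\binom{n+1}{2}x^{n+1}t=0$ in case (i), settling its characteristic-$2$ subcase. With these two evaluations added, your dimension-count argument does yield the three presentations, and the proof is complete and agrees with the cited one.
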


\begin{rem}\label{rem:bidegree}
In view of the $2$-periodic resolution used in the proof of \cite[Main Theorem]{Yang}, we see that 
$\text{bideg}~x=(0, 2m)$,  $\text{bideg}~u=(1, 0)$, $\text{bideg}~v=(1, -2m)$ and 
$\text{bideg}~t=(2, -2m(n+1))$ for the generators $x$, $u$, $v$ and $t$ in $HH^*(A; A)$; 
see \cite[Proposition 3.1]{Yang} and the proofs of \cite[Proposition 3.6]{Yang} and \cite[Theorem 4.7]{Yang}
for more details. 
\end{rem}


Let $M$ be a simply-connected Poincar\'{e} duality space 
whose cohomology with coefficients in $\K$ is isomorphic to $A$ as an algebra. 

\begin{thm}\label{thm0.1}
If $n+1\not\equiv 0$ modulo ${\rm ch}(\K)$, then
$$
{\mathbb H}_{*}(LM;{\mathbb K})\cong {\mathbb K}[x,u,t]/(x^{n+1},u^{2},x^{n}t,ux^{n})
$$
as a graded algebra, where $|x|=-2m$,  $|u|=-1$ 
and $|t|=2m(n+1)-2$.
\end{thm}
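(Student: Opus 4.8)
The plan is to feed the collapse of the spectral sequence into Yang's computation and then to resolve the resulting multiplicative extension problem by a sparsity/dimension argument.

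First I would specialize Theorem \ref{thm:loop_homology_ss} to $N=M$ and $f=\mathrm{id}$, obtaining the cohomological EMSS $\{{\mathbb E}_r^{*,*},d_r\}$ converging to ${\mathbb H}_*(LM)$ with ${\mathbb E}_2^{*,*}\cong HH^{*,*}(H^*(M);H^*(M))$ as bigraded algebras. Since $H^*(M)\cong A=\K[x]/(x^{n+1})$ is a truncated polynomial algebra on a single generator, Lemma \ref{lem:collapsing} gives collapse at the $E_2$-term, so ${\mathbb E}_\infty^{*,*}\cong HH^{*,*}(A;A)$ as bigraded algebras. In the range $n+1\not\equiv 0$ modulo ${\rm ch}(\K)$, Theorem \ref{thm:Yang_main}(i) identifies this with $\K[x,u,t]/(x^{n+1},u^2,x^nt,ux^n)$, while Remark \ref{rem:bidegree} records $\text{bideg}~x=(0,2m)$, $\text{bideg}~u=(1,0)$ and $\text{bideg}~t=(2,-2m(n+1))$. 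Because an ${\mathbb E}_\infty$-class of bidegree $(p,q)$ contributes to ${\mathbb H}_{-(p+q)}(LM)$, passing from cohomological bidegree to homological degree negates total degrees, which already yields the asserted values $|x|=-2m$, $|u|=-1$ and $|t|=2m(n+1)-2$.

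What remains is to upgrade the bigraded-algebra isomorphism ${\mathbb E}_\infty^{*,*}\cong HH^{*,*}(A;A)$ to an isomorphism of singly-graded algebras, i.e. to solve the multiplicative extension problems. I would choose representatives $\tilde x\in F^0$, $\tilde u\in F^1$, $\tilde t\in F^2$ of $x,u,t$ in ${\mathbb H}_*(LM)$, the filtration levels being read off from the bidegrees above. Since the symbols of $\tilde x,\tilde u,\tilde t$ generate $Gr^{*,*}{\mathbb H}_*(LM)={\mathbb E}_\infty^{*,*}$ as a bigraded algebra, and the filtration is exhaustive and finite in each total degree, the classes $\tilde x,\tilde u,\tilde t$ generate ${\mathbb H}_*(LM)$ as an algebra. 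It then suffices to verify that the four defining relations hold exactly for these lifts.

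The key point, and the only place where genuine work hides, is that each relation is forced by a sparsity of the $E_\infty$-page. For a relation $w=0$ of $HH^*(A;A)$, the corresponding product of lifts maps to $0$ in the relevant graded piece, hence lies in a strictly higher filtration while its total homological degree is fixed; any correction is therefore a $\K$-combination of basis monomials $x^au^bt^c$ (with $0\le a\le n$ and $b\in\{0,1\}$) of that total degree sitting in higher filtration. Inspecting the bidegrees shows there is no such monomial: the factor $t$ moves a class by $(2,-2m(n+1))$, so raising the filtration by $2$ forces the internal degree down by the large even amount $2m(n+1)$, whereas $u$ contributes only $(1,0)$. A short case analysis on $a,b,c$ (using $m\ge 1$) shows the resulting parity and inequality constraints have no solution for $u^2$, $x^{n+1}$, $x^nt$ and $ux^n$. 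Hence the relevant graded pieces of ${\mathbb H}_*(LM)$ vanish, each product is exactly $0$, and all four relations hold on the nose. Consequently $x\mapsto\tilde x$, $u\mapsto\tilde u$, $t\mapsto\tilde t$ defines a surjective algebra map $\K[x,u,t]/(x^{n+1},u^2,x^nt,ux^n)\to {\mathbb H}_*(LM)$, and since both sides have the same finite dimension in each degree, the target's being computed by ${\mathbb E}_\infty\cong HH^*(A;A)$, it is an isomorphism. The main obstacle is thus conceptual rather than computational: recognizing that the bidegree bookkeeping of Remark \ref{rem:bidegree} leaves no room for nontrivial multiplicative extensions.
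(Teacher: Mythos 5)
Your proposal is correct and follows essentially the same route as the paper: collapse via Lemma \ref{lem:collapsing}, identification of the $E_2$-term with Yang's computation of $HH^*(A;A)$ together with the bidegrees of Remark \ref{rem:bidegree}, and then resolution of the multiplicative extensions by showing that any correction term would have to be a basis monomial of the same total degree in strictly higher filtration, which the bidegree inequalities rule out. The paper simply carries out explicitly the ``short case analysis'' you defer to (one inequality argument for each of the four relations $x^{n+1}=0$, $u^2=0$, $x^nu=0$, $x^nt=0$), so your sketch matches its proof step for step.
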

\begin{proof} By virtue of Theorem \ref{thm:Yang_main}(i), we have
$$
{\mathbb E}_{2}^{*,*}\cong {\mathbb K}[x,u,t]/(x^{n+1},u^{2},x^{n}t,ux^{n})
$$
as a bigraded algebra, where ${\rm bideg}~x=(0, 2m)$, ${\rm bideg}~u=(1,0)$ 
and ${\rm bideg}~t=(2, -2m(n+1))$; see Remark \ref{rem:bidegree} and Figure (7.1) below. 
Lemma \ref{lem:collapsing} implies that, as bigraded algebras 
$$
{\mathbb E}_{2}^{p,q}\cong {\mathbb E}_{\infty }^{p,q} \cong Gr^{p,q}{\mathbb H}_{*}(LM)\cong F^{p}{\mathbb H}_{-(p+q)}(LM)/F^{p+1}{\mathbb H}_{-(p+q)}(LM).
$$

In order to solve extension problems, 
we verify that the following equalities hold in ${\mathbb H}_{-*}(LM;{\mathbb K})$: 
(1) $x^{n+1}=0$, (2) $u^{2}=0$, (3) $x^{n}u=0$ and (4) $x^{n}t=0$.
Since there exists no non-zero element in ${\mathbb E}_{2}^{p,q}$ for $p\geq 1$ and  $p+q=2m(n+1)$, 
it is readily seen that the equality $(1)$ holds. We next verify that $(2)$ holds. Suppose that $u^{2}=\sum \alpha_{ijk} x^{i}u^{j}t^{k} \neq 0$ for $\alpha_{ijk} \in {\mathbb K}$, $i<n+1$ and $j=0,1$. 
Since the total degrees of $u^{2}$, $x^{i}$ and $t^{k}$ are even, it follows that $j=0$ and 
hence $u^{2}=\sum \alpha_{i0k} x^{i}t^{k}$. We have
\begin{itemize}
\item $2= 2mi+ (-2m(n+1)+2)k$,
\item $2k\geq 3$
\end{itemize}

On the other hand, these deduce that 
\begin{align*}
0&= 2mi -2mk(n+1)+2k -2\\
  &<2m(n+1) -2mk(n+1)+2k -2= 2(m(n+1)-1)(1-k) < 0,  
\end{align*}
which is a contradiction. 
Thus the equality $(2)$ holds. We see that $(3)$ holds as well. In fact, suppose that $x^{n}u=\sum \alpha_{ijk} x^{i}u^{j}t^{k} \neq 0$ 
for $\alpha \in {\mathbb K}$ and $i<n+1$. For the same reason as above, we have $j=1$; 
that is, $x^{n}u=\sum \alpha_{i1k} x^{i}ut^{k}$. This implies that 
\begin{itemize}
\item $2mn+1=2mi+1+(-2m(n+1)+2)k$,
\item $1+2k\geq 2$. 
\end{itemize}
However these deduce that 
\begin{align*}
0&= 2mi+1+(-2m(n+1)+2)k-2mn\\ 
  &<2m(n+1)+1+(-2m(n+1)+2)k-2mn\\ 
  &=2m(1-k)+2k(1-mn)\leq 0.
\end{align*}
We thus obtain the equality $(3)$. In order to verify that the equality $(4)$ holds, we assume that 
$x^{n}t=\sum \alpha_{ijk} x^{i}u^{j}t^{k} \neq 0$ for $\alpha_{ijk} \in {\mathbb K}$ and $i<n+1$. 
It is readily seen that $j=0$ for dimensional reasons. This enables us to deduce that 
\begin{itemize}
\item $2mn-2m(n+1)+2= 2mi+ (-2m(n+1)+2)k$, 
\item $2k\geq 3$. 
\end{itemize}
Since the natural number $k$ is greater than or equal to $2$, it follows that       
\begin{align*}
0&= 2mi-2mk(n+1)+2k -2mn+2m(n+1)-2\\
  &<2m(n+1)-2mk(n+1)+2k -2mn+2m(n+1)-2\\
  &=2(1-k)(mn-1)+2(2-k)m\leq 0,
\end{align*}
which is a contradiction. 
Thus the equality $(4)$ holds. This completes the proof. 
\end{proof}
$$
\xymatrix@C5pt@R0.3pt{
             &&    & q \ar@{-}[ddddddddddd]                &           &           &&\\
               &&  & \hspace{-1em} x^{n}  \bullet    &          &               &&\\
               &&  & \hspace{-2em} x^{n-1} \bullet &    \hspace{1.5em}  \bullet  x^{n-1}u    &           &&\\
               &&  & \hspace{-2em} \vdots        &     \hspace{-1em}  \vdots     &           &&\\
                && &\hspace{-1em} x^{2} \bullet  &      \hspace{-1em} \vdots     &        &&\\
                && &\hspace{-1em} x \ \bullet &   \hspace{0.5em}  \bullet \  xu        &            &&\\
\ar@{-}[rrrr] &&& \bullet                         &   \bullet   \ u   \ar@{-}[rrr]    &            &&p\\
               &&  &                                    &       &           && \\
                && &&& \bullet x^{n-1}t&&\\
                && &&&\hspace{-2.5em} \vdots &&\\
                && &&&\hspace{-1.5em} \bullet \ t&&\\
                 &&&&&&& \\
}
\eqnlabel{add-7}
$$
\begin{thm}\label{thm0.2}
If $n+1\equiv 0$ modulo ${\rm ch}(\K)$, $n+1\geq 3$ and ${\rm ch}(\K)\neq 2$, then
$$
{\mathbb H}_{*}(LM; \K)\cong \K[x,v,t]/(x^{n+1},v^{2})
$$
as a graded algebra, where $|x|=-2m$, $|v|=2m-1$ and $|t|=2m(n+1)-2$.
\end{thm}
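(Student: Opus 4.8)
The plan is to imitate the proof of Theorem \ref{thm0.1} step for step, feeding in Theorem \ref{thm:Yang_main}(ii) in place of part (i). First I would apply Theorem \ref{thm:Yang_main}(ii) to the EMSS $\{{\mathbb E}_r^{*,*}, d_r\}$ converging to ${\mathbb H}_{-*}(LM; \K)$, identifying
$$
{\mathbb E}_2^{*,*} \cong \K[x, v, t]/(x^{n+1}, v^2)
$$
as a bigraded algebra with $\text{bideg}~x = (0, 2m)$, $\text{bideg}~v = (1, -2m)$ and $\text{bideg}~t = (2, -2m(n+1))$ read off from Remark \ref{rem:bidegree}. Since $A = \K[x]/(x^{n+1})$ is a truncated polynomial algebra on a single generator, Lemma \ref{lem:collapsing} forces the spectral sequence to collapse at the $E_2$-page, so that
$$
{\mathbb E}_2^{p,q} \cong {\mathbb E}_\infty^{p,q} \cong Gr^{p,q}{\mathbb H}_*(LM)
$$
as bigraded algebras. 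It then remains only to lift the two defining relations from the associated graded to genuine identities in ${\mathbb H}_{-*}(LM; \K)$, namely (1) $x^{n+1} = 0$ and (2) $v^2 = 0$.

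Both extension problems I would settle by the degree-bookkeeping method of Theorem \ref{thm0.1}. A product that vanishes on ${\mathbb E}_\infty$ lies a priori in strictly higher filtration, so in ${\mathbb H}_{-*}(LM)$ it is a $\K$-linear combination of the basis monomials $x^i v^j t^k$ (with $0 \leq i \leq n$ and $j \in \{0,1\}$) of the same total homological degree but of larger filtration degree $p = j + 2k$; the task is to show this correction class is forced to be zero. For relation (1), the class $x^{n+1}$ sits in filtration $0$ and homological degree $-2m(n+1)$, and the condition $p + q = 2m(n+1)$ for a correction monomial rearranges to $j(1 - 2m) + 2k(1 - m(n+1)) = 2m(n+1 - i)$. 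As $i \leq n$ the right side is positive, whereas both $1 - 2m$ and $1 - m(n+1)$ are negative, so the left side is nonpositive; this contradiction rules out every correction term.

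The main obstacle will be relation (2). Here the decisive input is a parity observation: $|v| = 2m - 1$ is odd while $x$, $t$, and the target $v^2$ all have even homological degree, so every correction monomial $x^i v^j t^k$ must have $j = 0$. Since $v^2 \in F^2$ represents $0$ in $Gr^2$, the correction class actually lies in $F^3$, so the surviving monomials satisfy $p = 2k \geq 3$, that is $k \geq 2$. Substituting $j = 0$ into the degree equation gives $k(2m(n+1) - 2) = 4m - 2 + 2mi$; bounding the left side below by its value at $k = 2$ and the right side above by its value at $i = n$ collapses the existence of a correction term to the inequality $mn \leq 1$. This is exactly the point where the hypothesis $n + 1 \geq 3$, i.e. $n \geq 2$, is consumed: combined with $m \geq 1$ it yields $mn \geq 2$, a contradiction. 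Hence $v^2 = 0$ as well, both relations lift, and the stated algebra isomorphism follows.
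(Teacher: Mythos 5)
Your proposal is correct and takes essentially the same route as the paper's proof: identify ${\mathbb E}_2^{*,*}$ via Theorem \ref{thm:Yang_main}(ii) with the bidegrees of Remark \ref{rem:bidegree}, invoke Lemma \ref{lem:collapsing} for the collapse, and then rule out correction terms for $x^{n+1}=0$ and $v^{2}=0$ by the same parity, filtration ($2k\geq 3$) and total-degree bookkeeping. Your rearrangement of the degree equation to the inequality $mn\leq 1$ is just an algebraic repackaging of the paper's chain of inequalities ending in $-2m+2\leq 0$, and your explicit verification of relation (1) is exactly the argument the paper imports from Theorem \ref{thm0.1}.
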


\begin{proof}
In view of Theorem \ref{thm:Yang_main}(ii), we have  
$
{\mathbb E}_{2}^{*,*}\cong \K[x,v,t]/(x^{n+1},v^{2})
$
as a bigraded algebra, where ${\rm bideg}~x=(0, 2m)$, ${\rm bideg}~v=(1, -2m)$ 
and ${\rm bideg}~t=(2, -2m(n+1))$; see Figure (2.2) below. 
Lemma  \ref{lem:collapsing} yields that, as bigraded algebras  
\[
{\mathbb E}_{2}^{p,q}\cong {\mathbb E}_{\infty }^{p,q} \cong 
Gr^{p,q}{\mathbb H}_{*}(LM)\cong F^{p}{\mathbb H}_{-(p+q)}(LM)/F^{p+1}{\mathbb H}_{-(p+q)}(LM).
\]
We verify that the following equalities hold in ${\mathbb H}_{-*}(LM; \K)$: 
(1) $x^{n+1}=0$ and (2) $v^{2}=0$.
By the same argument as in the proof of Theorem \ref{thm0.3}, it is readily seen that the equality $(1)$ holds. 
Suppose that $v^{2}=\sum \alpha_{ijk} x^{i}v^{j}t^{k} \neq 0$ for $\alpha_{ijk} \in \K$, 
$i<n+1$ and $j=0,1$. Since the total degrees of $v^{2}$, $x^{i}$ and $t^{k}$ are even, we see that $j=0$ and 
hence $v^{2}=\sum \alpha_{i0k} x^{i}t^{k}$. Thus an argument on the total degree and the filtration degree deduces that  
\begin{itemize}
\item $2-4m=2mi+(-2m(n+1)+2)k$,
\item $2k\geq 3$.
\end{itemize}
Then we conclude that 
\begin{align*}
0&= 2mi+(-2m(n+1)+2)k-2+4m\\
  &<2m(n+1)+(-2m(n+1)+2)k-2+4m\\
  &=-2(m(n+1)-1)(k-1)+4m \\
  &\leq -2(3m-1)(k-1)+4m \\
  &\leq  -2(3m-1)+4m =-2m+2\leq 0, 
\end{align*}
which is a contradiction. This completes the proof.
\end{proof}
$$
\xymatrix@C5pt@R0.3pt{
             &&    & q \ar@{-}[ddddddddddd]                &           &           &&\\
               &&  & \hspace{-1em} x^{n}  \bullet    &          &               &&\\
               &&  & \hspace{-2em} x^{n-1} \bullet &    \hspace{1em}  \bullet  x^{n}v    &           &&\\
               &&  & \hspace{-2em} \vdots        &     \hspace{-1em}  \vdots     &           &&\\
                && &\hspace{-1em} x^{2} \bullet  &      \hspace{-1em} \vdots     &        &&\\
                && &\hspace{-1em} x \ \bullet &   \hspace{1em}  \bullet \  x^{2}v        &            &&\\
\ar@{-}[rrrr] &&& \bullet                         &   \hspace{0.5em} \bullet   \ xv   \ar@{-}[rrr]    &            &&p\\
               &&  &                                    &   \bullet \ v    &   \hspace{-1em}  \bullet x^{n}t      && \\
                && &&& \bullet x^{n-1}t&&\\
                && &&&\hspace{-2.5em} \vdots &&\\
                && &&&\hspace{-1.5em} \bullet \ t&&\\
                 &&&&&&&\\
}
\eqnlabel{add-6}
$$

We next consider the case where ${\rm ch}(\K)=2$.

\begin{thm}\label{thm0.3}
If $n$ is odd, ${\rm ch}(\K)=2$ and  $n+1\geq 3$, then
$$
{\mathbb H}_{*}(LM; \K)\cong \K[x,v,t]/(x^{n+1},v^{2}-\frac{n+1}{2}tx^{n-1})
$$
as a graded algebra, where $|x|=-2m$, $|v|=2m-1$ and $|t|=2m(n+1)-2$.
\end{thm}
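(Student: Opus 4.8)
The plan is to reproduce the three-step strategy of Theorems \ref{thm0.1} and \ref{thm0.2}: identify the $E_2$-term, deduce collapse, and then resolve the extension problems. By Theorem \ref{thm:Yang_main}(iii) and the bidegrees recorded in Remark \ref{rem:bidegree},
$$
{\mathbb E}_2^{*,*} \cong \K[x,v,t]\big/\left(x^{n+1},\, v^2 - \frac{n+1}{2}tx^{n-1}\right)
$$
as a bigraded algebra, with ${\rm bideg}~x=(0,2m)$, ${\rm bideg}~v=(1,-2m)$ and ${\rm bideg}~t=(2,-2m(n+1))$. Since Lemma \ref{lem:collapsing} only requires $H^*(M;\K)$ to be a truncated polynomial algebra on one generator, it applies here and gives collapse, so ${\mathbb E}_2^{p,q}\cong{\mathbb E}_\infty^{p,q}\cong Gr^{p,q}{\mathbb H}_*(LM)$ as bigraded algebras. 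A $\K$-basis of ${\mathbb E}_2^{*,*}$ is given by the monomials $x^iv^jt^k$ with $0\le i\le n$, $j\in\{0,1\}$ and $k\ge 0$, each of total (homological) degree $-2mi+(2m-1)j+(2m(n+1)-2)k$ and filtration degree $p=j+2k$.

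It then remains to lift the two defining relations to ${\mathbb H}_*(LM)$: choosing representatives $x,v,t$ of the corresponding $E_\infty$-classes, I must check (1) $x^{n+1}=0$ and (2) $v^2=\frac{n+1}{2}tx^{n-1}$. In each case the two sides agree in the associated graded, so their difference lies in a strictly higher filtration and is therefore a $\K$-combination $\sum \alpha_{ijk}x^iv^jt^k$ of the basis monomials above subject to a fixed total degree and to $p=j+2k$ exceeding the filtration of the relation. The key bookkeeping device is that the total degree $-2mi+(2m-1)j+(2m(n+1)-2)k$ is congruent to $j$ modulo $2$, because $2m-1$ is odd; hence a monomial of even total degree must have $j=0$. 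For relation (1) the target degree $-2m(n+1)$ is even, forcing $j=0$ and $k\ge 1$, and then $i\le n$ together with $k\ge 1$ makes the degree equation contradictory (one checks $0=-2mi+(2m(n+1)-2)k+2m(n+1)\ge 2mn+4m-2>0$); this is precisely the computation to which the proof of Theorem \ref{thm0.2} defers.

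The genuinely new ingredient, and the step I expect to be the main obstacle, is relation (2): the $E_\infty$-identity is not $v^2=0$ but the twisted relation $v^2=\frac{n+1}{2}tx^{n-1}$, whose coefficient must be read in $\K$ of characteristic $2$. I would argue as follows. The difference $v^2-\frac{n+1}{2}tx^{n-1}$ has even total degree $4m-2$ and lies in $F^3{\mathbb H}_{4m-2}(LM)$, so each monomial of the putative correction has $p=j+2k\ge 3$; the parity device forces $j=0$, whence $k\ge 2$. Substituting $j=0$ into the degree equation and using $i\le n$ and $k\ge 2$ gives
\begin{align*}
0 &= -2mi + (2m(n+1)-2)k - (4m-2) \\
  &\ge -2mn + 2\bigl(2m(n+1)-2\bigr) - (4m-2) = 2mn-2 > 0,
\end{align*}
an impossibility since $m\ge 1$ and $n\ge 3$ (recall $n$ is odd with $n+1\ge 3$). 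Hence the correction vanishes and relation (2) holds exactly, independently of whether $\frac{n+1}{2}$ equals $0$ or $1$ in $\K$. With both relations verified, the algebra homomorphism from $\K[x,v,t]/(x^{n+1},v^2-\frac{n+1}{2}tx^{n-1})$ to ${\mathbb H}_*(LM)$ sending the generators to these representatives is well defined; it induces an isomorphism on the (degreewise finite-dimensional) associated graded algebras, hence is itself an isomorphism. The only real care beyond Theorems \ref{thm0.1}--\ref{thm0.2} is tracking the extra $tx^{n-1}$ term and confirming, via parity, that no odd-degree ($j=1$) monomial can intervene.
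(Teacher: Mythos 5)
Your proposal is correct and takes essentially the same approach as the paper: the paper's proof of Theorem \ref{thm0.3} simply invokes the argument of Theorem \ref{thm0.2}, and your collapse-plus-extension analysis (Lemma \ref{lem:collapsing}, the parity observation forcing $j=0$, and the degree inequality ruling out correction terms of filtration $\geq 3$) is exactly that argument, written out for the difference $v^{2}-\frac{n+1}{2}tx^{n-1}$ rather than for $v^{2}$ alone.
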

\begin{proof}
The same  argument as in the proof of Theorem \ref{thm0.2} yields the result.
\end{proof}

By considering the case where $n=1$ and  ${\rm ch}(\K)=2$, namely the cohomology is an exterior algebra, 
we have Theorem \ref{thm:calculations}. 

Suppose that $H^*(M; \K)$ is an exterior algebra generated by a single element. 
For dimensional reasons, we see that the EMSS converging $\{{\mathbb E}_r^{*,*}, d_r\}$ 
to the loop homology ${\mathbb H}_{-*}(LM; \K)$ collapses at the $E_2$-term and that there is no extension problem on the $E_\infty$-term. 
We then establish the following result. 

\begin{thm}\label{thm0.4} Let $M$ be a simply-connected space and $\K$ an arbitrary field. 
Assume that $H^*(M; \K)\cong \wedge (x)$, where $|x|=m$. Then 
$$
{\mathbb H}_{*}(LM; \K) \cong \wedge (x)\otimes \K[v]
$$
as a graded algebra, where $|x|=-m$ and $|v|=m-1$.
\end{thm}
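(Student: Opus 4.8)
The plan is to run the Eilenberg--Moore spectral sequence of Theorem \ref{thm:loop_homology_ss} with our space in place of $M$, so that
$$
{\mathbb E}_2^{*,*} \cong HH^{*,*}(H^*(M), H^*(M)) = HH^{*,*}(\wedge(x), \wedge(x))
$$
as a bigraded algebra, and then to follow the same three steps as in the proofs of Theorems \ref{thm0.1}--\ref{thm0.3}: compute the $E_2$-term, show the spectral sequence degenerates, and resolve the extension problems. The one new feature is that the single generator may have \emph{odd} degree, so Yang's computation (Theorem \ref{thm:Yang_main}) is not directly available in that case.

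First I would compute the $E_2$-term. When ${\rm ch}(\K)=2$ and $m$ is even, this is exactly Theorem \ref{thm:Yang_main}(iii) with $n=1$, giving $\wedge(x)\otimes\K[v]$ with $\bideg{x}=(0,m)$ and $\bideg{v}=(1,-m)$ by Remark \ref{rem:bidegree}. When $m$ is odd I would instead compute $HH^*(\wedge(x),\wedge(x))$ directly from the minimal $A^e$-resolution of $A=\wedge(x)$: since $x$ is odd one checks that $(x\otimes 1 - 1\otimes x)^2 = 0$ in $A^e$ (using $x^2=0$ and the odd sign), so $A$ admits the period-one resolution $\cdots \to A^e \xrightarrow{\,\cdot(x\otimes 1 - 1\otimes x)\,} A^e \xrightarrow{\,\cdot(x\otimes 1 - 1\otimes x)\,} A^e \to A \to 0$. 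Applying $\text{Hom}_{A^e}(-,A)$ makes every differential vanish, whence $HH^p(A,A)\cong A$ for all $p\geq 0$ and, assembling the products, $HH^*(A,A)\cong \wedge(x)\otimes\K[v]$ with the same bidegrees $\bideg{x}=(0,m)$, $\bideg{v}=(1,-m)$. In either case the generators sit in total (homology) degrees $|x|=-m$ and $|v|=m-1$, as required.

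Next I would establish collapse at the $E_2$-term for dimensional reasons. The only nonzero classes are $v^k$ and $xv^k$ ($k\geq 0$), occupying the bidegrees $(k,-km)$ and $(k,m-km)$ respectively. A differential $d_r$ with $r\geq 2$ has bidegree $(r,1-r)$, hence raises the total degree $p+q$ by one and the filtration degree by $r\geq 2$. Comparing the filtration and total degrees of the occupied spots in the columns $p=k$ and $p=k+r$ shows that no such $d_r$ can have both a nonzero source and a nonzero target; a short degree count of exactly the type carried out in the proof of Theorem \ref{thm0.1} rules out every possibility. Therefore ${\mathbb E}_2^{*,*}\cong{\mathbb E}_\infty^{*,*}\cong Gr^{*,*}{\mathbb H}_*(LM)$.

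Finally I would resolve the extension problems, again for dimensional reasons. The only relation that must be lifted from ${\mathbb E}_\infty^{*,*}$ to ${\mathbb H}_*(LM)$ is $x^2=0$: choosing a lift $\widetilde{x}\in F^0{\mathbb H}_{-m}(LM)$ of $x$, the element $\widetilde{x}^2$ lies in ${\mathbb H}_{-2m}(LM)$, and since no occupied bidegree $(p,q)$ satisfies $p+q=2m$ one has ${\mathbb H}_{-2m}(LM)=0$, forcing $\widetilde{x}^2=0$. Picking also a lift $\widetilde{v}$ of $v$ and using that the loop product is graded commutative, the assignment $x\mapsto\widetilde{x}$, $v\mapsto\widetilde{v}$ defines a homomorphism of graded algebras $\wedge(x)\otimes\K[v]\to{\mathbb H}_*(LM)$ which preserves the filtrations and induces the identity on the associated graded ${\mathbb E}_\infty^{*,*}$; being a filtered map that is an isomorphism on $Gr$ between algebras of finite type, it is itself an isomorphism. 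I expect the main obstacle to be the odd-degree computation of $HH^*(\wedge(x),\wedge(x))$ together with making the two appeals to ``dimensional reasons'' genuinely precise, that is, verifying that the two lines of generators $x^{\varepsilon}v^k$ leave no room for either a nonzero $d_r$ or a nontrivial multiplicative extension.
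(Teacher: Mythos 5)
Your proposal is correct and follows essentially the same route as the paper: the paper's own proof of Theorem \ref{thm0.4} consists precisely of the observation that, with ${\mathbb E}_2^{*,*}\cong HH^{*,*}(\wedge(x),\wedge(x))\cong \wedge(x)\otimes\K[v]$ concentrated on the two lines $x^{\varepsilon}v^{k}$, the spectral sequence collapses at $E_2$ and all extension problems vanish for dimensional reasons, and your explicit period-one resolution for odd $|x|$ together with the explicit degree counts merely fills in details the paper leaves implicit. The only caution is that your appeal to graded commutativity of the loop product (which the paper never establishes for Poincar\'e duality spaces, proving only associativity and unitality) is unnecessary: since ${\mathbb E}_\infty^{*,*}$ is graded commutative, the commutator of the lifts $\widetilde{x}$ and $\widetilde{v}$ lies in $F^{2}{\mathbb H}_{-1}(LM)=0$, which is exactly the same dimensional argument you use for $\widetilde{x}^{2}$.
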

 
\noindent
{\it Proof of Theorem  \ref{thm:calculations}.} Theorems \ref{thm0.1}, \ref{thm0.2}, 
\ref{thm0.3} and \ref{thm0.4} yield the result.  
\hfill\qed 

\begin{rem}
We are aware that Theorems  \ref{thm0.1},  \ref{thm0.2},  \ref{thm0.3} and \ref{thm0.4} recover 
the computations of the loop homology of spheres and complex projective spaces due to 
Cohen, Jones and Yan \cite{C-J-Y} when the coefficients of the homology are in a field. 
\end{rem}

\section{A method for solving extension problems on the EMSS for the loop homology} 

In this section, we give a method for solving extension problems which appear in the first line 
${\mathbb E}_\infty^{0, *}$ of the EMSS converging to the loop homology of a Poincar\'e duality space.

Let $M$ be a simply-connected Poincar\'e duality space of 
dimension $d$ with the fundamental class $\omega_M$. 
We recall that the shifted homology 
${\mathbb H}_*(M) := H_{*+d}(M)$ is an algebra with respect to the intersection pairing $m$ defined by 
$$m(a\otimes b) =(-1)^{d(|a|+d)}(\Delta^!)^\vee(a\otimes b), 
$$
where $\Delta^!$ stands for the shriek map in $\text{Ext}_{C^*(M)}^d(C^*(M), C^*(M\times M))\cong \K$ with 
$(\Delta^!)(\omega_{M})=(\omega_{M\times M})$; see \cite[Theorems 1 and 2]{F-T}.   

Let $[M]$ be the homology class defined by the formula $\langle \omega, [M]\rangle = 1$ with the Kronecker product. 
As seen in the proof of \cite[Theorem 2.11]{K-M-N}, the cap product 
$\theta_{H^*(M)} := \text{-} \cap [M] : H^*(M) \to H_{d-*}(M) = {\mathbb H}_{-*}(M)$ is an isomorphism of algebras; 
see also \cite[Example 10.3 (ii)]{K-M-N}.

\begin{prop}\label{prop:mor_alg} Let $ev_0 : LM \to M$ be the evaluation fibration 
over a simply-connected Poincar\'e 
duality space $M$ and $s : M \to LM$ the section of $ev_0$ defined by $s(x) = c_x$, 
where $c_x$ denotes the constant loop at $x$.  
Then the induced map $s_*  : {\mathbb H}_*(M) \to {\mathbb H}_*(LM)$ is a morphism of algebras. 
\end{prop}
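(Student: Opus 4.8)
The plan is to present the intersection product on $\mathbb{H}_*(M)$ and the Chas--Sullivan loop product on $\mathbb{H}_*(LM)$ as composites built from one and the same diagonal shriek map, and then to check that $s$ is compatible with each factor; throughout I would use that $s$ is a section of $ev_0$, so $ev_0\circ s=\mathrm{id}_M$. I write $\Delta^{!}$ for the umkehr map that the diagonal induces on homology, i.e. the dual $(\Delta^{!})^{\vee}$ of the class in $\mathrm{Ext}^{d}_{C^*(M\times M)}(C^*(M),C^*(M\times M))$ of the definition of $m$. With $q : LM\times_M LM\to LM\times LM$ the inclusion of the composable loops (pairs agreeing at $ev_0$) and $\gamma : LM\times_M LM\to LM$ the concatenation, the loop product is
\[
\mathbb{H}_*(LM)\otimes\mathbb{H}_*(LM)\xrightarrow{\ \times\ }H_{*+2d}(LM\times LM)\xrightarrow{\ q^{!}\ }H_{*+d}(LM\times_M LM)\xrightarrow{\ \gamma_*\ }\mathbb{H}_*(LM),
\]
where $q^{!}$ is $\Delta^{!}$ pulled back along $ev_0\times ev_0 : LM\times LM\to M\times M$, the square over $\Delta : M\to M\times M$ being a pullback. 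The intersection product is the same composite for $M$ itself, $\Delta^{!}\circ(\times)$ with the sign $(-1)^{d(|a|+d)}$ and no concatenation step. The conceptual point is that a single diagonal shriek governs both products.

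First I would record the geometric compatibilities. Since $s(x)=c_x$ is the constant loop, $s\times s$ carries the diagonal of $M\times M$ into the composable loops, producing $\tilde s : M\to LM\times_M LM$, $\tilde s(x)=(c_x,c_x)$, with $q\circ\tilde s=(s\times s)\circ\Delta$; the resulting square is itself a pullback, because $(ev_0\times ev_0)\circ(s\times s)=\mathrm{id}$ forces the fibre product of $q$ and $s\times s$ to be the diagonal copy of $M$. Concatenating two constant loops returns the constant loop, so $\gamma\circ\tilde s=s$ strictly, giving the commuting identity $\gamma_*\circ\tilde s_*=s_*$. Together with the naturality relation $(s\times s)_*\circ(\times)=(\times)\circ(s_*\otimes s_*)$ for the homology cross product, these are the two outer squares interpolating between the two composites.

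The remaining, and crucial, square is the shriek compatibility
\[
q^{!}\circ(s\times s)_*\ =\ \tilde s_*\circ\Delta^{!},
\]
which says that the umkehr map of the composable loops restricts along $s$ to the diagonal shriek of $M$. I would derive it from the base change (naturality) property of the Gorenstein shriek maps in \cite[Theorem 2.3]{K-M-N} and \cite[Theorems 1 and 2]{F-T}: by construction $q^{!}$ is $\Delta^{!}$ transported along $ev_0\times ev_0$, while $s\times s$ and $\tilde s$ are precisely the sections that $s$ induces, so the identity is the instance of base change attached to the morphism of pullback squares determined by $s$. This is the step I expect to be the main obstacle, since it has to be carried through the Eilenberg--Moore/derived description of $C^*(LM)$ as a $C^*(M\times M)$-module rather than over a manifold, where it would reduce to a Thom collapse together with the transversality of $s(M)\times s(M)$ with $LM\times_M LM$; the fact that both umkehr maps lower homological degree by $d=\dim M$ is the consistency check that makes the base change meaningful.

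Finally I would splice the three commuting squares to read off $s_*(a\cdot b)=s_*(a)\cdot s_*(b)$ for $a,b\in\mathbb{H}_*(M)$, the sign $(-1)^{d(|a|+d)}$ being transported identically through both products by their parallel definitions. Unitality is immediate, as $s_*$ carries the unit $[M]$ of the intersection ring to the class $s_*[M]$ of the constant loops, which is the unit of the loop product; hence $s_*$ is a morphism of algebras.
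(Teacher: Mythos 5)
Your proposal is correct and follows essentially the same route as the paper: the paper's proof hinges on exactly your key square, namely the compatibility of the diagonal shriek with the sections induced by $s$, which it states dually in cohomology as $H(\Delta^!)\circ t^* = (s\times s)^*\circ H(q^!)$ (its $t$ is your $\tilde s$), deduced from the commutativity of a cube of pullback squares together with the naturality of derived shriek maps (\cite[Theorem 8.5]{K-M-N}), combined with the identity $t^*\circ Comp^* = s^*$, i.e.\ your $\gamma\circ\tilde s = s$; the only real difference is that you argue directly in homology with umkehr maps, while the paper argues with the dual loop product $Dlp$ in cohomology and dualizes at the end, which amounts to the same thing since in the Gorenstein setting the loop product is defined as the dual of $Dlp$. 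One caveat: your closing claim that unitality is ``immediate'' because $s_*[M]$ is the unit of the loop product overstates the situation --- that $s_*[M]$ is a unit is precisely Proposition \ref{prop:unital} of the paper, whose proof requires the spectral-sequence filtration argument and a characteristic-$p$ iteration trick; but since at this stage ``morphism of algebras'' can only mean multiplicative (the unit of ${\mathbb H}_*(LM)$ has not yet been produced), this aside does not affect the validity of your proof of the stated proposition.
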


We prove Proposition \ref{prop:mor_alg} after describing our main theorem 
in this section. 

Let $\{{\mathbb E}_r^{*,*}, d_r \}$ be the EMSS converging to the loop homology ${\mathbb H}_*(LM)$, which is 
described in Theorem \ref{thm:loop_homology_ss}. 
The following theorem is reliable when solving extension problems 
on the first line ${\mathbb E}_\infty^{0,*}$. 

\begin{thm} \label{thm:EMSSes}
Let $M$ be a simply-connected Poincar\'e duality space of dimension $d$. Then
{\rm (i)} there exists a first quadrant spectral sequence $\{\widetilde{{\mathbb E}}_r^{*,*}, \widetilde{d}_r \}$ converging to 
the shifted homology ${\mathbb H}_{-*}(M)$ as an algebra such that 
 $\widetilde{{\mathbb E}}_r^{0, *}\cong H^*(M)$ as an algebra and 
 $\widetilde{{\mathbb E}}_r^{i, *}= 0$ for $i > 0$.  \\
{\rm (ii)} There exists a morphism of spectral sequences 
$$
\{s_{r*}\} : \{\widetilde{{\mathbb E}}_r^{*,*}, \widetilde{d}_r \} \to \{{\mathbb E}_r^{*,*}, d_r \}
$$
such that {\rm (a)} each $s_{r*}$ is a morphism of bigraded algebras, 
 {\rm (b)} the diagram 
$$
\xymatrix@C30pt@R10pt{
\widetilde{{\mathbb E}}_2^{0,*} \ar[r]^{s_{2*}} &  {\mathbb E}_2^{0,*}  \\
H^*(M) \ar@{->}[u]^{\cong} &   \\ 
\ar@{=}[u]  H^*({\rm Hom}_{H^*(M)}(H^*(M), H^*(M)) 
\ar[r]_(0.6){H({\rm Hom}(\varepsilon, 1))} & 
HH^{0,*}(H^*(M), H^*(M)) \ar@{->}[uu]_{\cong}
}
$$ 
is commutative, where $H^*(M) \cong \widetilde{{\mathbb E}}_2^{0,*}$ and 
${\mathbb E}_2^{*,*}\cong HH^*(H^*(M), H^*(M))$ are the isomorphisms in {\rm (i)} and in 
Theorem \ref{thm:loop_homology_ss}.  
respectively and \\
{\rm (c)} the map $s_{\infty*}$ coincides with the composite  
$$
\widetilde{{\mathbb E}}_\infty^{0,*} \cong {\mathbb H}_{-*}(M) \stackrel{s_*}{\longrightarrow} 
F^0{\mathbb H}_{-*}(LM)/F^1{\mathbb H}_{-*}(LM)\cong {\mathbb E}_\infty^{0,*}.
$$  
\end{thm}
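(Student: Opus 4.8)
The plan is to realize both spectral sequences through the differential torsion product description of the EMSS in \cite[Theorem 2.3]{K-M-N} and to obtain the comparison $\{s_{r*}\}$ from a map of pullback squares that models the section $s$. For the spectral sequence in (i), I would take the Eilenberg--Moore spectral sequence attached to the trivial pullback square in which the path fibration $(ev_0, ev_1) : M^I \to M\times M$ and the diagonal are all replaced by the identity map of $M$; its total space is $M$, so its $E_2$-term is $\text{Tor}_{H^*(M)}(H^*(M), H^*(M)) \cong H^*(M)$, concentrated in filtration degree $0$, and the spectral sequence collapses with $\widetilde{{\mathbb E}}_r^{i,*} = 0$ for $i>0$. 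Dualizing exactly as in the construction of $\{{\mathbb E}_r^{*,*}, d_r\}$ recalled at the start of Section 2, the abutment is identified with $H^*(M)$, and convergence to ${\mathbb H}_{-*}(M)$ as an algebra is then precisely the assertion that the cap product $\theta_{H^*(M)} : H^*(M) \to {\mathbb H}_{-*}(M)$ is an isomorphism of algebras, which was recorded before Proposition \ref{prop:mor_alg}. This also gives the algebra isomorphism $\widetilde{{\mathbb E}}_r^{0,*} \cong H^*(M)$.

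I would then construct the morphism of spectral sequences. The constant--path map $\Delta' : M \to M^I$, $x \mapsto c_x$, satisfies $(ev_0, ev_1)\circ \Delta' = \Delta$, so together with the diagonal $\Delta : M \to M\times M$ on the base it defines a map from the trivial square above to the defining square of $LM$, and the induced map on total spaces is exactly $s : M \to LM$. Feeding this map of squares into the functorial construction of \cite[Theorem 2.3]{K-M-N} produces $\{s_{r*}\}$. On $E_2$-terms the comparison is the change--of--rings map induced by $H^*(\Delta) : H^*(M)\otimes H^*(M) \to H^*(M)$, which is the augmentation $\varepsilon$ of the bar resolution; after the Gorenstein dualization that converts $\text{Tor}$ into ${\mathbb E}_2 \cong HH^{*,*}(H^*(M), H^*(M))$, this is $H(\text{Hom}(\varepsilon,1))$, which is (b). By naturality of convergence the abutment map of $\{s_{r*}\}$ is $s_*$; since $\widetilde{{\mathbb E}}$ is concentrated in filtration $0$ one has $\widetilde{{\mathbb E}}_\infty^{0,*} = \widetilde{{\mathbb E}}_2^{0,*}\cong {\mathbb H}_{-*}(M)$, and $s_{\infty*}$ is the associated graded of $s_*$, whose image lies in $F^0{\mathbb H}_{-*}(LM)$ and projects to $F^0/F^1 \cong {\mathbb E}_\infty^{0,*}$, which is (c).

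The multiplicativity in (a) is where I expect the real work. The product on $\widetilde{{\mathbb E}}$ is the cup product on $H^*(M)$, corresponding under $\theta_{H^*(M)}$ to the intersection product, whereas the product on $\{{\mathbb E}_r\}$ is the loop product built from the Gorenstein shriek map $\Delta^!$. To see that each $s_{r*}$ respects these products I would descend to the cochain level and check that the map of squares is compatible with the diagonals and the shriek maps defining the two multiplications; the homotopy--commutativity needed there is the chain--level refinement of Proposition \ref{prop:mor_alg}, asserting that $s_* : {\mathbb H}_*(M) \to {\mathbb H}_*(LM)$ is an algebra map. The delicate point is that the loop product is defined through the Gorenstein shriek class rather than by a naive cup product, so verifying that the comparison of resolutions intertwines the two multiplicative structures compatibly across all pages is the main obstacle; once the $r=2$ and $r=\infty$ compatibilities are pinned down as above, the intermediate pages follow from multiplicativity of each differential $d_r$.
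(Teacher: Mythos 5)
Your proposal is correct and takes essentially the same route as the paper: there too, $\{\widetilde{{\mathbb E}}_r^{*,*},\widetilde{d}_r\}$ is the dualized EMSS of the trivial square built on $\mathrm{Tor}_{C^*(M)}(C^*(M),C^*(M))$, the morphism $\{s_{r*}\}$ is induced by the map of pullback squares given by the diagonal and the constant-path map (the paper's diagrams (3.1)--(3.3)), multiplicativity comes from the Tor-level compatibility with the shriek maps --- the same diagrams that underlie the paper's proof of Proposition \ref{prop:mor_alg} --- and (b) is checked by dualizing the change-of-rings/augmentation map (diagram (3.4)). The only organizational difference is that the paper defines the product on $\widetilde{{\mathbb E}}_r$ directly as the dual of $\sigma=\mathrm{Tor}_1(\Delta^!,1)\circ \mathrm{Tor}_{\Delta^*}(1,\Delta^*)^{-1}$ and then proves separately that this agrees with the cup product on the first line (diagram (3.5), using that $\xi=\mathrm{Tor}_m(1,1)^\vee$ is injective and $\zeta$ is an algebra isomorphism), whereas you import that identification from the recorded fact that $\theta_{H^*(M)}$ is an algebra isomorphism, which is a legitimate shortcut since the paper records that fact before Proposition \ref{prop:mor_alg}.
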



\begin{rem}\label{rem:cycles} The injective map $ev_0^* : H^*(M) \to H^*(LM)$ factors through 
the edge homomorphism of the EMSS $\{E_r^{*,*}, d_r\}$ converging to the cohomology $H^*(LM)$. 
Observe that the evaluation fibration $p=ev_0: LM \to M$ has a section. 
Thus we see that all the elements in the line $E_2^{0, *}$ survive to the $E_\infty$-term. This implies that 
the elements in ${\mathbb E}_2^{0,*}$ are permanent cycles.  
\end{rem}

\begin{rem}
The relative versions of Proposition \ref{prop:mor_alg} and Theorem \ref{thm:EMSSes} remain valid; that is, the spaces $M$ and $LM$ can be replaced with $N$ and $L_NM$, respectively in the statements. 
This follows from the proofs mentioned below. 
\end{rem}

Before proving  Proposition \ref{prop:mor_alg} and Theorem \ref{thm:EMSSes}, we consider the following  
diagrams 
$$
{\footnotesize
\xymatrix@C2pt@R3pt{
 && M\times M \ar[rrd]_{s\times s} \ar[ddddd]^(0.7){=} \ar[rrrrrrr]^{=}& & & & & & & 
 M \times M \ar[ddddd]^{=} \ar[llld]_{s\times s} \\
 && &  & LM \times LM \ar[rr]^{i}  \ar@{->}'[d][ddd]^(0.3){p\times p}  & & 
 M^I\times M^I \ar@{->}'[d][ddd]^(0.5){p^2} \\
M \ar[rruu]^{\Delta}\ar[rrrd]^{t} \ar[ddddd]^{=} \ar[rrrrrrr]_(0.6){=} & & & & & & & 
M \ar[lld]^{t} \ar[rruu]_{\Delta}\ar[ddddd]^(0.3){=}& \\  
&& & LM\times _MLM \ar[rr]_(0.6){j} \ar[ruu]^(0.6){q} \ar[ddd] & & 
M^I\times_MM^I \ar[ruu]^(0.6){\widetilde{q}} \ar[ddd]^(0.5){u} & \\
 & & & & M\times M \ar@{->}'[r]^(0.6){\Delta \times \Delta}[rr] & & M^{4} \\
 && M \times M \ar[rru]^(0.4){=} \ar@{->}'[r]'[rrr][rrrrrrr]^(0.4){=}& & & & & & & 
 M \times M \ar[lllu]_{\Delta\times\Delta}\\
&& & M \ar[rr]_{(1\times \Delta)\circ \Delta=v} \ar[uur]^(0.6){\Delta} & & 
M^{3}   \ar[ruu]_(0.4){1\times \Delta \times 1=w} & \\
 M \ar[rruu]^{\Delta} \ar[rrru]^{=}\ar[rrrrrrr]_{=}&& & & & & & M, \ar[llu]_v \ar[rruu]_{\Delta} & \\  
}
} 
\eqnlabel{add-0}
$$

$$
{\footnotesize
\xymatrix@C10pt@R3pt{
 &&  M  \ar[lldd]^{=} \ar[rrd]_(0.4){t} \ar[ddddd]^(0.7){=} \ar[rrrrrrr]^{=}& & & & & & &  
 M \ar[ddddd]^{=} \ar[llld]_{t} \ar[lldd]_{=}\\
 && &  & LM \times_M LM \ar[ldd]_(0.4){Comp} \ar[rr]^{j}  \ar@{->}'[d][ddd]^{}  & & 
 M^I\times_M M^I \ar[ldd]_(0.4){Comp} \ar@{->}'[d][ddd]^(0.5){u} \\
M \ar[rrrd]_(0.4){s} \ar[ddddd]^{=} \ar[rrrrrrr]_(0.6){=} & & & & & & & M \ar[lld]^{s} \ar[ddddd]^(0.3){=}& \\  
&& & LM \ar[rr]_(0.6){k}  \ar[ddd] & & M^I \ar[ddd]^(0.5){u} & \\
 & & & & M  \ar@{->}'[r]^-{v}[rr] \ar@{=}[ddl] & & M^{3} \ar[ldd]_(0.4){p_{13}}\\
 && M  \ar[lldd]^{=} \ar[rru]^(0.5){=} \ar@{->}'[r]'[rrr][rrrrrrr]^(0.4){=}& & & & & & & M \ar[lllu]_(0.6){v} \ar[lldd]_{=}\\
&& & M \ar[rr]_{\Delta} & & M\times M    & \\
 M  \ar[rrru]_{=}\ar[rrrrrrr]_{=}&& & & & & & M, \ar[llu]_\Delta  & \\  
}
}
\eqnlabel{add-1}
$$
where $t(x)=(c_x, c_x)$. Observe that all squares in the diagrams are commutative.

\medskip
\noindent
{\it Proof of Proposition \ref{prop:mor_alg}}. 
The commutativity of the left hand-side cube in (3.1) and \cite[Theorem 8.5]{K-M-N} below enable us to deduce that 
$H(\Delta^!)\circ t^* =(s\times s)^*\circ H(q^!)$. By the commutativity of the left-hand side cube in (3.2), we see 
that $t^*\circ Comp^* = s^*$ and hence 
$H(\Delta^!)\circ s^* = (s\times s)^* H(q^!) \circ Comp^*$. This implies that the induced map 
$s_* : {\mathbb H}_*(M) \to {\mathbb H}_*(LM)$ is a morphism of algebras.  
\hfill\qed

\medskip
\noindent
{\it Proof of Theorem \ref{thm:EMSSes}}. The commutative diagrams (3.1) and (3.2) induce a commutative 
diagram 
$$
\hspace{-0.5cm}
{\footnotesize
\xymatrix@C1pt@R10pt{
 &  &  &  \text{Tor}^*_{C^*(M^{2})}(C^*(M), C^*(M^I)) \ar@/_1.5pc/[ldd]_(0.5){\text{Tor}_{\Delta^*}(1, s^*)}  
 \ar@/_2.2pc/[lldd]_(0.6){EM}^(0.6){\cong}  \ar[d]^(0.45){\text{Tor}_{p_{13}^*}(1, c^*)}  \\
 &  & &     \text{Tor}^*_{C^*(M^{3})}(C^*(M), C^*(M^I\times_M M^I)) \ar[ld]^{\text{Tor}_{v^*}(1, t^*)}  \ar@/_2.2pc/
 [lldd]_(0.6){EM}^(0.6){\cong}\\
 & H^*(LM) \ar@/_1pc/[ldd]_(0.5){s^*} \ar[d]_{Comp^*} & \text{Tor}^{*}_{C^*(M)}(C^*(M), C^*(M))  \ar@/^1pc/
 [lldd]_(0.55){EM}^(0.4){\cong} &  \\
  & H^*(LM\times_MLM)  \ar[ld]_(0.6){t^*} \ar@{->}'[d]_{H(q^!)}[dd]  & &   
          \text{Tor}^*_{C^*(M^{4})}(C^*(M), C^*(M^I\times M^I)) \ar@{->}'[l][ll]^(0.25){EM}_(0.25){\cong} \ar[dd]^
          {\text{Tor}_{1}(\Delta^!, 1)} 
            \ar[ld]^(0.35){\text{Tor}_{C^*(\Delta^2)}(1, C^*(s^2))}  
                \ar[uu]_{\text{Tor}_{w^*}(1, {\widetilde{q}}^*)}^{\cong} \\
H^*(M)  \ar[dd]_{H^*(\Delta^!)} & & \text{Tor}^{*}_{C^*(M^{2})}(C^*(M), C^*(M^{2}))  \ar[ll]^{EM}_{\cong} \ar[dd]^
(0.3){\text{Tor}_{1}(\Delta^!, 1)} 
          \ar[uu]_(0.7){\text{Tor}_{\Delta^*}(1, \Delta^*)} & \\
   & H^*(LM)^{\otimes 2} \ar[ld]_{(s^*)^{\otimes 2}}& & \text{Tor}^{*}_{C^*(M^{4})}(C^*(M^{2}), C^*(M^I\times M^I)) 
   \ar@{->}'[l][ll]^(0.4){EM}_(0.4){\cong} 
     \ar[ld]^(0.35){\text{Tor}_{C^*(\Delta^2)}(1, C^*(s^2))}   \\
H^*(M)^{\otimes 2}    & & \text{Tor}^{*}_{C^*(M^{2})}(C^*(M^{2}), C^*(M^{2})).  \ar[ll]^(0.6){EM}_(0.6){\cong} &
}
} 
\eqnlabel{add-2}
$$
The composite of the right hand-side vertical arrows in the big back square is the torsion functor description of 
$Dlp$ in the proof of \cite[Theorem 2.3]{K-M-N}; see the diagram (3.3).   

The Eilenberg-Moore map ${\rm Tor}_{C^*(M)}(C^*(M), C^*(M)) \stackrel{\cong}{\to} H^*(M)$ enables us to  
construct the EMSS converging to $H^*(M)$. 
Dualizing the EMSS, we have a spectral sequence $\{\widetilde{{\mathbb E}}_r^{*,*}, \widetilde{d}_r \}$ 
converging to ${\mathbb H}_*(M)$. 
It is immediate that $\widetilde{{\mathbb E}}_r^{i, *}= 0$ for $i > 0$. 
The front cube (3.2) induces the top square in (3.3), which is commutative, and hence we obtain 
a morphism of spectral sequences $
\{s_{r*}\} : \{\widetilde{{\mathbb E}}_r^{*,*}, \widetilde{d}_r \} \to \{{\mathbb E}_r^{*,*}, d_r \}$. 
Moreover by the commutativity of the diagram (3.3), we see that $\{s_{r*}\}$ satisfies the conditions 
(ii)(a) and (ii)(c). 
In fact, the dual to the composite $\sigma:={\rm Tor}_1(\Delta^!, 1)\circ {\rm Tor}_{\Delta^*}(1, \Delta^*)^{-1}$ 
gives rise to the product on 
each stage $\widetilde{{\mathbb E}}_r^{*,*}$. 

Let $A$ denote the cohomology $H^*(M)$ and $\theta_A =[M]\cap - : A \to A^\vee$ the morphism in 
the proof of \cite[Theorem 2.11]{K-M-N}. 
In order to prove that $s_{2*}$ satisfies (ii)(b), we consider a diagram 
$$
\xymatrix@C50pt@R10pt{
({\rm Tor}_{A^{\otimes 2}}(A, A)^{\vee})^{*-d}  & 
             ({\rm Tor}_{A}(A, A)^{\vee})^{*-d} \ar[l]_{{\rm Tor}_m(1, 1)^\vee} \\
{\rm Hom}_{\K}(H^{d-*}(A\otimes _{A^{\otimes 2}}{\mathbb B}), \K) \ar@{=}[u]  & 
        {\rm Hom}_{\K}(H^{d-*}(A\otimes _{A}A), \K)    \ar[l]^{{\rm Hom}(H(1\otimes\varepsilon),  1)} \ar@{=}[u]  \\
H^{*-d}({\rm Hom}_{\K}(A\otimes _{A^{\otimes 2}}{\mathbb B}, \K)) \ar@{->}[u]^{\cong} & 
  H^{*-d}({\rm Hom}_{\K}(A\otimes _{A}A, \K)) \ar@{->}[u]_{\cong}    
               \ar[l]^{H({\rm Hom}(1\otimes \varepsilon, 1))} \\
 H^{*-d}({\rm Hom}_{A^{\otimes 2}}({\mathbb B},A^{\vee})) \ar[u]^{\iota_{*}}  &
     H^{*-d}({\rm Hom}_{A}(A, A^{\vee})) \ar[u]_{\iota_{*}}  \ar[l]^{H({\rm Hom}(\varepsilon, 1))} \\ 
 H^{*}({\rm Hom}_{A^{\otimes 2}}({\mathbb B},A)) \ar[u]^{\theta_{A*}}  & 
        H^{*}({\rm Hom}_{A}(A, A)) \ar[u]_{\theta_{A*}}   \ar[l]^{H({\rm Hom}(\varepsilon, 1))}\\ 
HH^{*}(A,A) \ar@{=}[u] & A. \ar@{=}[u] \ar[l]^{H({\rm Hom}(\varepsilon, 1))} 
}
\eqnlabel{add-3}
$$
The naturality of maps allows us to deduce that all squares are commutative. 
Observe that the composite of the left hand-side vertical arrows is nothing but the isomorphism 
$\zeta= u \circ  HH^*(1, \text{-}\cap [M] )$ in the proof of \cite[Theorem 2.11]{K-M-N}.  
Thus we obtain the commutative diagram in (ii)(b).  
We are left to prove that $\widetilde{{\mathbb E}}_2^{0, *}\cong H^*(M)$ as an algebra. 
Let $\widetilde{\zeta}$ be the composite of the right hand-side vertical arrows in (3.4). Consider the following diagram 
$$
\hspace{-1cm}
\xymatrix@C25pt@R15pt{
HH^*(A, A)\otimes HH^* (A, A) \ar[d]_{\zeta\otimes \zeta}^{\cong} \ar[r]^(0.6){\cup} & HH^*(A, A) \ar[d]^{\zeta}_{\cong}\\
 \text{Tor}_{A^{\otimes 2}}(A, A)^\vee\otimes  \text{Tor}_{A^{\otimes 2}}(A, A)^\vee \ar[r]^(0.65){(Dlp_2)^\vee}&  
 \text{Tor}_{A^{\otimes 2}}(A, A)^\vee \\
  \text{Tor}_{A}(A, A)^\vee \otimes \text{Tor}_{A}(A, A)^\vee \ar[u]^{\xi\otimes \xi} \ar[r]^(0.6)\mu&  
  \text{Tor}_{A}(A, A)^\vee \ar[u]_{\xi}\\
A\otimes A \ar[u]^{\widetilde{\zeta}\otimes \widetilde{\zeta}}_{\cong} 
\ar[r]^(0.6){\cup}  \ar@/^7pc/[uuu]^(0.5){\eta\otimes \eta} & A 
\ar[u]_{\widetilde{\zeta}}^{\cong}   \ar@/_4pc/[uuu]_(0.5){\eta}\\
}
\eqnlabel{add-6}
$$
in which the center square is commutative, where  $\xi = \text{Tor}_m(1, 1)^\vee$, 
$\eta = \text{Hom}(\varepsilon, 1) : A = H^*(\text{Hom}_A(A, A)) \to HH^*(A,A)$ and $\mu$ 
denotes the dual to the map induced by the composite $\sigma$ mentioned above. 
The map ${\rm Tor}_m(1, 1)$ is an epimorphism and hence $\xi$ is a monomorphism. 
We observe that $\zeta$ is an isomorphism of algebras of degree $-d$; see \cite[Definition 10.1]{K-M-N}.  
Then so is $\widetilde{\zeta}$.  This completes the proof. 
\hfill\qed

\medskip

With the aid of the spectral sequence in Theorem \ref{thm:loop_homology_ss}, 
we show that the relative loop homology of a Poincar\'e duality space is unital. 

\begin{prop} \label{prop:unital} Let $N$ be a simply-connected Poincar\'e duality space of dimension $d$. Then 
the loop homology $\mathbb{H}_*(L_NM)$ is an associative unital algebra. 
\end{prop}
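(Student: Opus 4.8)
The plan is to prove that $\mathbb{H}_*(L_NM)$ is associative and unital by transporting these structures through the Eilenberg--Moore spectral sequence of Theorem \ref{thm:loop_homology_ss}, in exactly the way the preceding propositions and Theorem \ref{thm:EMSSes} set up. The associativity is essentially formal once the product is described as the dual of the loop coproduct $Dlp$ realized by the torsion-functor composite appearing in diagram (3.3): this composite is built out of shriek maps and the Eilenberg--Moore isomorphisms, all of which are natural, so associativity descends from the associativity of the underlying differential graded bar construction. I would therefore concentrate the argument on exhibiting a two-sided unit, since that is where the Poincar\'e duality hypothesis on $N$ does the real work.

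First I would use the relative version of Proposition \ref{prop:mor_alg} (valid by the remark following Theorem \ref{thm:EMSSes}, with $M$ and $LM$ replaced by $N$ and $L_NM$): the section $s : N \to L_NM$ sending a point to the constant loop induces an algebra map $s_* : \mathbb{H}_*(N) \to \mathbb{H}_*(L_NM)$. The shifted homology $\mathbb{H}_*(N) = H_{*+d}(N)$ is unital, because by the discussion preceding Proposition \ref{prop:mor_alg} the cap product $\theta_{H^*(N)} = {-}\cap [N] : H^*(N) \to \mathbb{H}_{-*}(N)$ is an isomorphism of algebras, and $H^*(N)$ is a unital algebra with unit $1 \in H^0(N)$. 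Consequently $\mathbb{H}_*(N)$ carries the fundamental class $[N]$ as its unit, and the candidate unit for $\mathbb{H}_*(L_NM)$ is $s_*[N]$, the class represented by the constant loops at the fundamental cycle.

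Next I would verify that $s_*[N]$ is genuinely a two-sided unit. The clean way is through the spectral sequence comparison of Theorem \ref{thm:EMSSes}(ii): the morphism $\{s_{r*}\} : \{\widetilde{\mathbb{E}}_r^{*,*}\} \to \{\mathbb{E}_r^{*,*}\}$ is multiplicative and, on $E_2$-terms, by part (ii)(b) it realizes the edge map $H({\rm Hom}(\varepsilon,1)) : H^* ({\rm Hom}_{H^*(N)}(H^*(N),H^*(N))) \to HH^{0,*}(H^*(N), H^*(N))$, i.e.\ the inclusion of the Hochschild cohomology line $p=0$ as the unital subalgebra $H^*(N) \hookrightarrow HH^*(H^*(N), H^*(N))$. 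The unit $1$ of $H^*(N) \cong \widetilde{\mathbb{E}}_2^{0,*}$ is sent to the multiplicative unit of $\mathbb{E}_2^{*,*} \cong HH^*(H^*(N), H^*(N))$, which survives to the unit of $\mathbb{E}_\infty^{*,*}$. By part (ii)(c), $s_{\infty*}$ agrees with the map induced by $s_*$ on the associated graded, so the image of $1$ under $s_*$ represents the unit in $F^0\mathbb{H}_{-*}(L_NM)/F^1\mathbb{H}_{-*}(L_NM)$; tracking through the filtration, $s_*[N]$ acts as identity on the $E_\infty$-level in every filtration degree.

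The main obstacle is passing from a unit on the associated graded $Gr^{*,*}\mathbb{H}_*(L_NM) \cong \mathbb{E}_\infty^{*,*}$ to an honest unit in the filtered algebra $\mathbb{H}_*(L_NM)$ itself. Knowing that $s_*[N]\cdot \alpha - \alpha$ lies in a strictly higher filtration for every $\alpha$ does not immediately give $s_*[N]\cdot \alpha = \alpha$; one must run a standard convergence/induction argument over the finite, exhaustive filtration $\{F^p\}$, using that $1\cdot \alpha = \alpha$ holds in each quotient and that the filtration is bounded so the error terms eventually vanish. I expect this to be the only step requiring care, and it is handled by a descending induction on filtration degree together with the fact that the filtration on $\mathbb{H}_*(L_NM)$ is finite in each total degree (guaranteed by the finite-type and right-half-plane hypotheses of Theorem \ref{thm:loop_homology_ss}); associativity then ensures the left and right unit properties are compatible, completing the proof.
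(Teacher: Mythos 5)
Your overall strategy matches the paper's: the candidate unit is $\mathbb{I} = s_*[N]$ for the algebra map $s_* : \mathbb{H}_*(N) \to \mathbb{H}_*(L_NM)$ coming from the relative version of Proposition \ref{prop:mor_alg}, and Theorem \ref{thm:EMSSes}(ii)(b),(c) together with Remark \ref{rem:cycles} shows that $\mathbb{I}$ represents the unit of $\mathbb{E}_\infty^{*,*}$. But the final step --- passing from a unit on the associated graded to an honest unit in $\mathbb{H}_*(L_NM)$ --- has a genuine gap, and you even identify it as the main obstacle before waving it away. Knowing that $\mathbb{I}\cdot\alpha - \alpha$ lies in strictly higher filtration for every $\alpha$, that the product preserves the filtration, and that the filtration is bounded in each total degree does \emph{not} imply $\mathbb{I}\cdot\alpha = \alpha$: there is no sequence of ``error terms that eventually vanish,'' only a single error term $R$ for each $\alpha$, and boundedness alone cannot kill it. Concretely, in $A = \mathbb{K}[x]/(x^2)$ with the bounded, exhaustive algebra filtration $F^0 = A \supset F^1 = (x) \supset F^2 = 0$, the element $e = 1 + x$ acts as the identity on both graded quotients, yet $e \cdot 1 = 1 + x \neq 1$; so ``identity on the associated graded'' does not descend to the filtered algebra.

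The missing idea --- and the one the paper's proof turns on --- is idempotency. Since $s_*$ is an algebra map and $[N]$ is the unit of $\mathbb{H}_*(N)$, you get $\mathbb{I}\cdot\mathbb{I} = \mathbb{I}$ for free; your setup provides this but your argument never uses it. With it, the descending induction on filtration degree closes: for $\alpha \in (F^s)^n$ write $\mathbb{I}\cdot\alpha = \alpha + R$ with $R \in (F^{s+1})^n$ (the base case is the top filtration, where $F^{s+1} = 0$); by the induction hypothesis $\mathbb{I}\cdot R = R$, and associativity gives
$$
\alpha + R \;=\; \mathbb{I}\cdot\alpha \;=\; (\mathbb{I}\cdot\mathbb{I})\cdot\alpha \;=\; \mathbb{I}\cdot(\alpha + R) \;=\; \mathbb{I}\cdot\alpha + R \;=\; \alpha + 2R,
$$
forcing $R = 0$. (The paper phrases this by iterating multiplication by $\mathbb{I}$ to obtain $\mathbb{I}\cdot\alpha = \alpha + \mathrm{ch}(\mathbb{K})R = \alpha$ in positive characteristic, and disposes of the rational case by citing \cite[Theorem 2.17]{K-M-N}; the cancellation above works in any characteristic.) Note that the counterexample $e = 1 + x$ is consistent with this diagnosis: $e^2 = 1 + 2x \neq e$, so it is not idempotent. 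Without invoking $\mathbb{I}\cdot\mathbb{I} = \mathbb{I}$, the inductive step simply cannot be completed, so as written your proof does not establish the unit property.
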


\begin{proof} The result \cite[Proposition 2.7]{K-M-N} yields the associativity of the relative loop homology algebra 
${\mathbb  H}_*(L_NM)$. We prove that the algebra is unital. 

In the rational case, the result follows from \cite[Theorem 2.17]{K-M-N};  
see also \cite[Theorem 1]{F-T:rationalBV}. 
We assume that the characteristic of the underlying field is positive. 

Let $1_N$ stand for the unit of the intersection homology $\mathbb{H}_*(N)$, namely the fundamental class of $N$. 
Let  $s_* : \mathbb{H}_*(N) \to \mathbb{H}_*(L_NM)$ be the algebra map mentioned 
in  Proposition \ref{prop:mor_alg}. 
We put $\mathbb{I} = s_*(1_N)$. 
Then it is immediate that $\mathbb{I}\cdot \mathbb{I} = \mathbb{I}$.  

Recall the right half-plane 
spectral sequence $\{\mathbb{E}_r^{*,*}, d_r\}$ described in 
Theorem \ref{thm:loop_homology_ss}.  
It follows from Remark \ref{rem:cycles} that 
the unit  $1$ in the bigraded algebra $\mathbb{E}_2^{*,*} \cong HH^{*, *}(H^*(M), H^*(N))$ is a permanent cycle. 
Observe that the Hochschild cohomology is unital. 
In view of  Theorem \ref{thm:EMSSes} (ii)(b) and (c), 
we can choose $\mathbb{I}$ as a representative of the unit. In fact 
the diagram (3.4) enables us to deduce that $s_{2*}$ sends the fundamental class to the unit $1$ in  
$\mathbb{E}_2^{*,*}$ up to isomorphism. 

Let $\{F^p\}_{p\geq 0}$ be the filtration of the loop homolog $\mathbb{H}_*(L_NM)$ which the spectral sequence 
$\{\mathbb{E}_r^{*,*}, d_r\}$ provides. 
Then we see that $(F^p)^n=0$ for $p> \dim N -n$; see \cite[Remark 6.1]{K-M-N}.  
This yields that $\mathbb{I}\cdot a = a$ for any $a$ in $(F^p)^n$ with $p=\dim N -n$. Suppose that 
$\mathbb{I}\cdot Q = Q$ for any $Q \in (F^{>s})^n$. Let $\alpha$ be an element in $(F^s)^n$. 
Since $\mathbb{I}\cdot \alpha = \alpha$ in $\mathbb{E}_\infty^{s, *}$, it follows that 
$\mathbb{I}\cdot \alpha = \alpha + R$ for some $R$ in $(F^{s+1})^n$ and hence 
$$
\mathbb{I}\cdot \alpha = (\mathbb{I}\cdot \mathbb{I}) \cdot \alpha 
=\mathbb{I}\cdot (\mathbb{I}\cdot \alpha) = \mathbb{I}\cdot \alpha + R = \alpha + 2R. 
$$
Iterating  the multiplication by $\mathbb{I}$, we see that  $\mathbb{I}\cdot \alpha = \alpha + ch(\K)R = \alpha$.
This completes the proof. 
\end{proof}

\medskip
We now give an application of Theorem \ref{thm:EMSSes}. 

\begin{thm}\label{thm:SO}  Let $M$ be the Stiefel manifold $SO(m+n)/SO(n)$. 
Suppose that $m \leq \text{\em min}\{4, n\}$. Then 
$$
{\mathbb H}_*(LM; {\mathbb Z}/2) \cong \wedge (x_n, x_{n+1}, ..., x_{n+m-1}) \otimes 
{\mathbb Z}/2[\nu_n^*, \nu_{n+1}^*, ..., \nu_{n+m-1}^*]
$$
as an algebra, where $\deg x_i = - i$ and $\deg \nu_j^* = -(1-j)$. 
\end{thm}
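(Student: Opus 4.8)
The plan is to run the Eilenberg--Moore spectral sequence of Theorem~\ref{thm:loop_homology_ss} with $N=M$ and $\K={\mathbb Z}/2$, identify its $E_2$-term, prove collapse, and resolve the one genuine extension problem by means of Theorem~\ref{thm:EMSSes}. First I would record the cohomology ring. Writing $M=V_m({\mathbb R}^{m+n})=SO(m+n)/SO(n)$, the hypothesis $m\le n$ forces $2i>n+m-1$ for every $i\in\{n,\dots,n+m-1\}$, so that the standard simple system of generators squares to zero and
$$
H^*(M;{\mathbb Z}/2)\cong \wedge(x_n,x_{n+1},\dots,x_{n+m-1}),\qquad |x_i|=i,
$$
is a genuine exterior algebra. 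Moreover $M$ is a simply-connected closed manifold, hence a Poincar\'e duality and therefore Gorenstein space, once $m\ge 2$; the case $m=1$ gives $M=S^{n}$ and is already covered by Theorem~\ref{thm0.4}.

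Next I would compute the $E_2$-term. By Theorem~\ref{thm:loop_homology_ss}, ${\mathbb E}_2^{*,*}\cong HH^{*,*}(H^*(M;{\mathbb Z}/2),H^*(M;{\mathbb Z}/2))$. Using the K\"unneth isomorphism $HH^*(A\otimes B)\cong HH^*(A)\otimes HH^*(B)$ over the field ${\mathbb Z}/2$, together with the single-generator computation that the mod-$2$ Hochschild cohomology of $\wedge(x_i)={\mathbb Z}/2[x_i]/(x_i^2)$ is $\wedge(x_i)\otimes {\mathbb Z}/2[v_i]$ (the $n=1$ case of Theorem~\ref{thm:Yang_main}(iii), valid for $x_i$ of arbitrary degree over ${\mathbb Z}/2$), I obtain
$$
{\mathbb E}_2^{*,*}\cong \wedge(x_n,\dots,x_{n+m-1})\otimes {\mathbb Z}/2[v_n,\dots,v_{n+m-1}]
$$
as a bigraded algebra, with $\mathrm{bideg}(x_i)=(0,i)$ and $\mathrm{bideg}(v_i)=(1,-i)$. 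The resulting total degrees are exactly $\deg x_i=-i$ and $\deg v_i=\deg\nu_i^*=i-1$, matching the assertion.

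The collapse is the crux. Since the $x_i$ lie on the line ${\mathbb E}_2^{0,*}$ they are permanent cycles by Remark~\ref{rem:cycles}, and because each $d_r$ is a derivation, every differential is determined by its values $d_r(v_i)\in {\mathbb E}_r^{1+r,\,-i-r+1}$. This is where the bound $m\le 4$ enters: I would show by a bidegree count that, for $m\le\min\{4,n\}$, the groups in filtration $1+r\ge 3$ of the relevant total degree contain no monomial $\prod x_s\prod v_j^{a_j}$ solving the degree equation $\sum\epsilon_s s=\sum a_j(j-1)+2-i$, so that $d_r(v_i)=0$ for all $r\ge 2$ and ${\mathbb E}_\infty={\mathbb E}_2$. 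As a guiding check, the product of spheres $S^n\times\cdots\times S^{n+m-1}$ has the same cohomology ring and, being formal, realizes precisely this collapsed $E_2$ through the monoidality of loop homology; the hypothesis is what forces $M$ into the same range of behaviour. I expect this degree bookkeeping to be the main obstacle, since for larger $m$ the target groups need not vanish and a nonzero higher differential --- reflecting the failure of $M$ to be a product of spheres --- could genuinely appear.

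Finally I would solve the extension problems. The associated graded algebra of ${\mathbb H}_*(LM)$ is ${\mathbb E}_\infty={\mathbb E}_2$ above, whose only defining relation is $x_i^2=0$; the polynomial generators $v_i$ impose none, and ${\mathbb Z}/2$-loop homology is commutative, so no ordering relations are needed. To lift $x_i^2=0$ exactly I would invoke Theorem~\ref{thm:EMSSes}: by part~(i) the intersection ring ${\mathbb H}_{-*}(M)$ is isomorphic as an algebra to $H^*(M;{\mathbb Z}/2)=\wedge(x_n,\dots,x_{n+m-1})$, in which every generator squares to zero, and by part~(ii)(c) the section map $s_*:{\mathbb H}_{-*}(M)\to {\mathbb H}_*(LM)$ of Proposition~\ref{prop:mor_alg} realizes the edge line ${\mathbb E}_\infty^{0,*}$. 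Setting $x_i:=s_*(\bar x_i)$ and using that $s_*$ is a morphism of algebras gives $x_i^2=s_*(\bar x_i^{\,2})=0$ in ${\mathbb H}_*(LM)$. Choosing arbitrary permanent-cycle representatives $v_i$, the induced algebra map $\wedge(x_n,\dots,x_{n+m-1})\otimes {\mathbb Z}/2[v_n,\dots,v_{n+m-1}]\to {\mathbb H}_*(LM)$ is filtration-preserving and induces the identity on ${\mathbb E}_\infty$, hence is an isomorphism, which is the assertion with $v_i=\nu_i^*$.
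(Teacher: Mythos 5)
You correctly isolate the collapse of the spectral sequence as the crux, but the mechanism you propose for it --- a bidegree count showing that the targets of $d_r(v_i)$ vanish --- provably fails for every $m\ge 2$, i.e.\ in all cases of the theorem except $M=S^n$ (which is already Theorem~\ref{thm0.4}). With your conventions ($\mathrm{bideg}\,x_i=(0,i)$, $\mathrm{bideg}\,v_i=(1,-i)$, $d_r$ of bidegree $(r,1-r)$), the monomial $x_nx_{n+1}v_nv_{n+1}^{2}$ has bidegree
$$
(0,n)+(0,n+1)+(1,-n)+2\,(1,-(n+1))=(3,\,-n-1),
$$
which is exactly the bidegree of $d_2(v_n)\in{\mathbb E}_2^{3,-n-1}$; so the target group of $d_2(v_n)$ is nonzero whenever $m\ge 2$. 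Concretely, for $m=n=2$ (the Stiefel manifold $V_2({\mathbb R}^4)$, allowed by the hypothesis $m\le\min\{4,n\}$) one finds ${\mathbb E}_2^{3,-3}={\mathbb Z}/2\{x_3v_2^3,\,x_2x_3v_2v_3^2\}\neq 0$ as the home of $d_2(v_2)$. Hence no amount of degree bookkeeping yields $d_r(v_i)=0$, and your product-of-spheres heuristic cannot repair this: the $E_2$-term depends only on the ring $H^*(M;{\mathbb Z}/2)$, but the differentials depend on the homotopy type of $M$, which is precisely what is at issue.

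The missing ingredient is the one the paper quotes at this exact point: the collapse is imported from the proof of Corollary 5(1) of \cite{Kuri1991} (see also Proposition 1.7(2) and the proof of Theorem 4 there), where the differentials are killed using structure finer than bidegrees --- compatibility of the Eilenberg--Moore spectral sequence with Steenrod operations --- and it is there, not in any counting argument, that the hypothesis $m\le 4$ does its work. The rest of your outline is sound and essentially the paper's proof: identifying the $E_2$-term by the K\"unneth theorem for Hochschild cohomology together with Yang's rank-one computation is a legitimate alternative to the paper's citation of Proposition 2.4 of \cite{Kuri2011}, and your resolution of the extension problem $x_i^2=0$ via Theorem~\ref{thm:EMSSes} and Proposition~\ref{prop:mor_alg} (setting $x_i=s_*(\bar x_i)$ and using that $s_*$ is an algebra map) is exactly the argument in the paper. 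One secondary caveat: the commutativity of ${\mathbb H}_*(LM;{\mathbb Z}/2)$, which you invoke to rule out commutator-type extensions $x_ix_j+x_jx_i\in F^{\ge 1}$, is not automatic from commutativity of ${\mathbb E}_\infty$ and should be justified (for the closed manifold $M$ it follows from Chas--Sullivan commutativity of the loop product), a point the paper itself leaves implicit.
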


We mention that Chataur and Le Borgne \cite{C-L} have determined the loop homology of 
$SO(2+n)/SO(n)$ with coefficients in ${\mathbb Z}$ by using enriched Leray-Serre and Morse 
spectral sequences with the loop product; see \cite[Section 2]{C-L} and \cite[Theorem 2]{LeB}.

\medskip
\noindent
{\it Proof of Theorem \ref{thm:SO}.}
Consider the EMSS $\{{\mathbb E}_r^{*,*}, d_r \}$
converging to ${\mathbb H}_*(LM)$. Since $m \leq n$, it follows that 
$H^*(M; {\mathbb Z}/2) \cong \wedge (x_n, x_{n+1}, ..., x_{n+m-1})$ as an algebra. 
Moreover, the condition that $m\leq 4$ and the proof of \cite[Corollary 5 (1)]{Kuri1991} imply that 
 $\{{\mathbb E}_r^{*,*}, d_r \}$ collapses at the $E_2$-term; see also \cite[Proposition 1.7 (2)]{Kuri1991} and the proof of 
 \cite[Theorem 4]{Kuri1991}. 
 By virtue of \cite[Proposition 2.4]{Kuri2011}, we see that as a bigraded algebra, 
 $$
{\mathbb E}_\infty^{*,*} \cong \wedge (x_n, x_{n+1}, ..., x_{n+m-1}) \otimes 
{\mathbb Z}/2[\nu_n^*, \nu_{n+1}^*, ..., \nu_{n+m-1}^*], 
 $$
 where $\text{bideg}~x_i = (0, i)$ and $\text{bideg}~\nu^*_i = (1, -i)$. 
 
 We solve the extension problems in the $E_\infty$-term.  
 Recall the spectral sequences and the morphism $\{s_{r*}\}$ 
 of spectral sequences in  Theorem \ref{thm:EMSSes}. It follows from 
 Theorem \ref{thm:EMSSes}(ii)(b) and (c) that 
 for the induced map $s_* : {\mathbb H}_*(M) \to  {\mathbb H}_*(LM)$, 
 $s_*(x_i) = x_i$ for any $1 \leq i \leq n+m-1$. 
 Observe that $s_*$ is a morphism of algebras; see Proposition \ref{prop:mor_alg}. 
 It turns out that $x_i^2 =0$ in ${\mathbb H}_*(LM)$ for any $i$. 
 We have the result. 
\hfill\qed

\medskip
\begin{rem}\label{rem:H-space}
Let $X$ be a simply-connected space whose mod $p$ cohomology is an
exterior algebra, 
say $H^*(X; {\mathbb Z}/p)\cong \wedge (y_1, ..., y_l)$. Suppose that either of the following conditions (I) and (II) holds. \\
(I) $X$ is an H-space and $\deg y_i$ is odd for any $i$. \\
(II) $Sq^1 \equiv 0$ if $p=2$. \\ 
Then the same argument as in the proof of Theorem  \ref{thm:SO} enables us to conclude that 
$$
{\mathbb H}_*(LX) \cong
\wedge(\widetilde{y}_1,\widetilde{y}_2, ..., \widetilde{y}_l)\otimes {\mathbb Z}/p[\nu_1^*, \nu_2^*, ...,
\nu_l^*]
$$
as an algebra, where $\deg  \widetilde{y}_j =-\deg y_j$ and 
$\deg \nu_j^* = \deg y_j-1$.

A more general result will appear in~\cite{KMnoetherian}.  
\end{rem}


\section{Naturality of the EMSS}

In order to prove Theorem \ref{thm:functors}, 
we give a correspondence of morphisms between the categories $\text{\bf Poincar\'e}_M^{op}$ and 
$\text{\bf GradedAlg}_{H_*(\Omega M)}$.  
We will describe the proof in terms of the derived tensor products $\text{-} \otimes^{\mathbb L} \text{-} $. 

Let $M$ be a space and $N_{1}$, $N_{2}$ Poincar\'{e} duality spaces of dimension $d_{1}$ and $d_{2}$, respectively. 
For a morphism 
$$
\xymatrix@C25pt@R10pt{
N_{1} \ar[rr]^-{f} \ar[rd]_{\alpha _{1}}& & N_{2} \ar[ld]^{\alpha _{2}} \\
     &  M &
}
$$
in $\text{\bf Poincar\'e}_M$, 
we have a commutative diagram
$$
\xymatrix@C25pt@R20pt{
&L_{\alpha _{2}}M \ar[r] \ar[d]_-{ev_{0}} & LM \ar[d]^-{ev_{0}} \ar[r] & M^{I} \ar[d]^-{(ev_{0},ev_{1})}\\
L_{\alpha _{1}}M \ar[ru]^-{F} \ar@/_0.4pc/[rru] \ar[d]^-{ev_{0}}
&N_{2} \ar[r]^-{\alpha _{2}} & M \ar[r]_-{\Delta } & M\times M \\
N_{1} \ar[ru]^-{f} \ar@/_0.4pc/[rru]_-{\alpha _{1}} &&&
}
$$
for which back squares are pull-back diagrams. 
The singular cochain algebra $C^{*}(N_{i})$ is considered $C^{*}(M^{2})$-module structure via the map $\alpha _{i}^{*}\Delta ^{*}$. 
By \cite[Theorems 1 and 2]{F-T}, we obtain a right $C^{*}(M^{2})$-module map
$
f^{!} : {\mathbb B}_{1} \longrightarrow {\mathbb B}_{2}
$
with degree $d_{2}-d_{1}$.
Here ${\mathbb B}_{i}$ is a right $C^{*}(M^{2})$-semifree resolution of $C^{*}(N_{i})$.
Then, we define a map $F^! :  H^{*}(L_{\alpha _{1}}M) \to H^{*}(L_{\alpha _{2}}M)$ to be the composite
$$
\xymatrix@C25pt@R20pt{
H^{*}(L_{\alpha _{1}}M) \ar[r]^-{{\rm EM}^{-1}}_-{\cong } & H^{*}({\mathbb B}_{1}\otimes _{C^{*}(M^{2})}F) \ar[d]^{H( f^{!} \otimes 1)} & \\
& H^{*}({\mathbb B}_{2}\otimes _{C^{*}(M^{2})}F) \ar[r]^-{{\rm EM}}_-{\cong } & H^{*}(L_{\alpha _{2}}M), &
}
$$
where $\varepsilon :F\to C^{*}(M^{I})$ is a left $C^{*}(M^{2})$-semifree resolution of $C^{*}(M^{I})$.

\begin{prop}\label{prop:alg_maps}
{\em (i)} The shriek map of $F:L_{\alpha_1}M\rightarrow L_{\alpha_2}M$ is compatible with the dual loop product; 
that is, the following diagram is commutative
$$
\xymatrix@C25pt@R15pt{
H^{*}(L_{\alpha _{1}}M) \ar[rr]^-{F^{!}} \ar[d]_-{Dlp} & &H^{*}(L_{\alpha _{2}}M) \ar[d]^-{Dlp}\\
H^{*}(L_{\alpha _{1}}M)\otimes H^{*}(L_{\alpha _{1}}M) \ar[rr]_-{(-1)^{d_{1}(d_{2}-d_{1})}F^{!}\otimes F^{!}} & &H^{*}(L_{\alpha _{2}}M)\otimes H^{*}(L_{\alpha _{2}}M).
}
$$
{\em (ii)} Let $\{E_r^{*,*}, d_r\}$ be the Eilenberg-Moore spectral sequence converging to $H^*(L_{\alpha_i}M)$ in 
Theorem \ref{thm:loop_homology_ss}.  
Then the square
$$
\xymatrix@C25pt@R15pt{
({\rm Tor}_{H^{*}(M^{2})}(H^{*}(N_{2}),H^{*}(M)))^{\vee } \ar[d]_-{(-1)^{d_{1}(d_{2}-d_{1})}{\rm Tor}_{1}(H(f^{!}),1)^{\vee}}& HH^{*}(H^{*}(M),H^{*}(N_{2})) \ar[l]_-{(-1)^{d_{2}}\zeta_2} \ar[d]^-{HH(1,f^{*})} \\
({\rm Tor}_{H^{*}(M^{2})}(H^{*}(N_{1}),H^{*}(M)))^{\vee } & HH^{*}(H^{*}(M),H^{*}(N_{1})) \ar[l]_-{(-1)^{d_{1}}\zeta_1}
}
$$
is commutative, where  $\zeta_i$ is the composite $$u_i\circ HH(1, \theta_{H^*(N_i)}) : 
HH^*(H^*(M), H^*(N_i)) \stackrel{\cong}{\to}  HH^*(H^*(M), H_*(N_i))  \stackrel{\cong}{\to} (E_2^{*,*})^\vee;$$ 
see the proof of \cite[Theorem 2.11]{K-M-N} for the natural map $u_i$.
\end{prop}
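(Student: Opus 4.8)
The plan is to transport both squares into the differential torsion product model of \cite[Theorem 2.3]{K-M-N}, where $F^!$ and the dual loop product $Dlp$ admit purely algebraic descriptions, and then to reduce commutativity to the naturality of the Eilenberg--Moore isomorphism together with the defining Poincar\'e duality property of shriek maps from \cite[Theorems 1 and 2]{F-T}.

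For part (i), I would first recall that, under the Eilenberg--Moore isomorphism $H^*(L_{\alpha_i}M)\cong \text{Tor}_{C^*(M^2)}(C^*(N_i),C^*(M^I))$, the dual loop product $Dlp$ is realized (as in diagram (3.3)) as a composite of Tor-maps built from the comultiplications induced by the diagonals and from the diagonal shriek map of $N_i$, which has degree $d_i$. Since $f^!:\mathbb{B}_1\to\mathbb{B}_2$ is a right $C^*(M^2)$-module map, it induces $F^!=\text{Tor}(f^!,1)$, and on the first Tor-variable this is strictly natural. I would then assemble the relative analogue of diagram (3.3) as a cube whose two opposite faces are the $Dlp$-composites for $i=1,2$ and whose horizontal edges are $\text{Tor}(f^!,1)$ on the source and $\text{Tor}(f^!\otimes f^!,1)$ on the target. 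Commutativity of this cube then rests on two ingredients: the naturality of the Eilenberg--Moore map with respect to the module map $f^!$, and the chain-level compatibility of $f^!$ with the diagonal shriek maps, coming from the commuting square $(f\times f)\circ\Delta_{N_1}=\Delta_{N_2}\circ f$ and the functoriality of shriek maps. The global sign $(-1)^{d_1(d_2-d_1)}$ is precisely the Koszul sign for interchanging the degree $(d_2-d_1)$ map $f^!$ with the degree $d_1$ shriek of $N_1$, together with the sign in the identification $(f\times f)^!\simeq f^!\otimes f^!$.

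For part (ii), I would unwind $\zeta_i=u_i\circ HH(1,\theta_{H^*(N_i)})$ and reduce the square to a single statement about coefficients. The left vertical map is dual to $\text{Tor}_1(H(f^!),1)$, where $H(f^!):H^*(N_1)\to H^*(N_2)$ is the Gysin map, while the right vertical map $HH(1,f^*)$ is induced by the ordinary pullback $f^*:H^*(N_2)\to H^*(N_1)$. Invoking the naturality of the comparison map $u_i$, the claim reduces to the equality, up to the stated signs, of the two coefficient maps $H^*(N_2)\to H_*(N_1)$ given by $\theta_{H^*(N_1)}\circ f^*$ and $(H(f^!))^\vee\circ\theta_{H^*(N_2)}$. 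This is exactly the Poincar\'e duality characterization of the shriek map: through the cap-product isomorphisms $\theta_{H^*(N_i)}$, the Gysin map $H(f^!)$ is dual to the homology pushforward $f_*$, whose own dual is $f^*$. The signs $(-1)^{d_i}$ and $(-1)^{d_1(d_2-d_1)}$ are then what is needed to absorb the Koszul discrepancies from dualizing the degree $(d_2-d_1)$ maps and from the degree $-d_i$ of $\theta_{H^*(N_i)}$.

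The hard part will be the chain-level compatibility in part (i): one must show that the $C^*(M^2)$-module map $f^!$ commutes, up to homotopy and hence on torsion products, with the cochain models of the composition and diagonal maps defining $Dlp$, and pin the resulting sign down exactly. This is the Poincar\'e-duality-space analogue of the smooth statement \cite[Theorem 8]{G-S} that an umkehr map intertwines loop products, and it is here that the defining property of $f^!$ from \cite[Theorems 1 and 2]{F-T} must be applied with care. Part (ii), by contrast, should amount to a careful bookkeeping of Poincar\'e duality and Koszul signs once the naturality of $u_i$ is established.
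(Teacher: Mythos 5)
Your proposal follows essentially the same route as the paper: part (i) is proved there by transporting $Dlp$ into the derived tensor product description of \cite[Theorem 2.3]{K-M-N}, using that $f^!$ is a morphism in the derived category to get naturality in the first variable, the homotopy $\Delta^!f^!\simeq (f\times f)^!\Delta^!$, and \cite[Theorem 8.6]{K-M-N} for the Eilenberg--Zilber/K\"unneth comparison producing the sign $(-1)^{d_1(d_2-d_1)}$; part (ii) is likewise reduced to the Poincar\'e duality square $\theta_2^R\circ H(f^!)=H(f)^\vee\circ\theta_1^R$, dualized with \cite[Lemma 10.6]{K-M-N}, followed by naturality of the Hom/Tor comparison maps. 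Your identification of the key ingredients, the hard step, and the origin of the signs matches the paper's argument.
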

\begin{proof} (i) 
By a relative version of \cite[Theorem 2.3]{K-M-N}, we see that the composite 
$$
\xymatrix@C25pt@R10pt{
C^{*}(N_{i})\otimes ^{{\mathbb L}}_{C^{*}(M^{2})}C^{*}(M^{I}) \ar[r]^-{p_{13}^{*}\otimes _{p_{13}^{*}}c^{*}} & C^{*}(N_{i})\otimes^{{\mathbb L}} _{C^{*}(M^{3})}C^{*}(M^{I}\times _{M}M^{I})  \\
& C^{*}(N_{i})\otimes^{{\mathbb L}} _{C^{*}(M^{4})}C^{*}(M^{I}\times M^{I}) \ar[u]_-{\omega^{*}\otimes _{\omega^{*}}\tilde{q}^{*}}^{\simeq } \ar[d]^-{\Delta^{!}\otimes 1}\\
& C^{*}(N_{i}^{2})\otimes ^{{\mathbb L}}_{C^{*}(M^{4})}C^{*}(M^{I}\times M^{I}) \ar[d]^-{{\rm EZ}^{\vee} \otimes _{{\rm EZ}^{\vee}}{\rm EZ}^{\vee}}\\
& (C_{*}(N_{i})^{\otimes 2})^{\vee}\otimes _{(C_{*}(M^{2})^{\otimes 2})^{\vee}}^{\mathbb L} (C_{*}(M^{I})^{\otimes 2})^{\vee}\\
(C^{*}(N_{i})\otimes ^{{\mathbb L}}_{C^{*}(M^{2})}C^{*}(M^{I}) ) ^{\otimes 2} \ar[r]^-{\top }_-{\cong}& C^{*}(N_{i})^{\otimes 2}\otimes ^{{\mathbb L}} _{C^{*}(M^{2})^{\otimes 2}}C^{*}(M^{I})^{\otimes 2} \ar[u]_-{\Theta \otimes _{\Theta }\Theta }^-{\simeq }
}
$$
induces the dual loop product of $H^{*}(L_{\alpha _{i}}M)$. 
Since the morphism $f^{!}$ is in the derived category ${\rm D}({\rm Mod}\text{-}C^{*}(M))$, 
it follows that $f^{!}$ is considered a morphism in ${\rm D}({\rm Mod}\text{-}C^{*}(M^{i}))$ via $p_{13}^{*}$ and $\omega^{*}$ for $i=3,4$. 
This enables us to obtain a homotopy commutative diagram
$$
\xymatrix@C30pt@R10pt{
C^{*}(N_{1})\otimes^{{\mathbb L}} _{C^{*}(M^{2})}C^{*}(M^{I}) \ar[d]_-{p_{13}^{*}\otimes _{p_{13}^{*}}c^{*}} \ar[r]^{f^{!}\otimes 1}& C^{*}(N_{2})\otimes^{{\mathbb L}} _{C^{*}(M^{2})}C^{*}(M^{I}) \ar[d]^-{p_{13}^{*}\otimes _{p_{13}^{*}}c^{*}}\\
 C^{*}(N_{1})\otimes^{{\mathbb L}} _{C^{*}(M^{3})}C^{*}(M^{I}\times _{M}M^{I})  \ar[r]^{f^{!}\otimes 1}&  C^{*}(N_{2})\otimes^{{\mathbb L}} _{C^{*}(M^{3})}C^{*}(M^{I}\times _{M}M^{I}) \\
C^{*}(N_{1})\otimes ^{{\mathbb L}}_{C^{*}(M^{4})}C^{*}(M^{I}\times M^{I}) \ar[u]^-{\omega^{*}\otimes _{\omega^{*}}\tilde{q}^{*}}_{\simeq } \ar[d]_-{\Delta^{!}\otimes 1}  \ar[r]^{f^{!}\otimes 1}& C^{*}(N_{2})\otimes ^{{\mathbb L}}_{C^{*}(M^{4})}C^{*}(M^{I}\times M^{I}) \ar[u]_-{\omega^{*}\otimes _{\omega^{*}}\tilde{q}^{*}}^{\simeq } \ar[d]^-{\Delta^{!}\otimes 1}\\
C^{*}(N_{1}^{2}) \otimes^{{\mathbb L}} _{C^{*}(M^{4})}C^{*}(M^{I}\times M^{I}) \ar[r]^{(f\times f)^{!}\otimes 1}& C^{*}(N_{2}^{2})\otimes ^{{\mathbb L}}_{C^{*}(M^{4})}C^{*}(M^{I}\times M^{I}). 
}
\eqnlabel{add-6}
$$

The fact that $\Delta^{!}f^{!}$ is homotopic to $(f\times f)^{!}\Delta ^{!}$ deduces the commutativity of the bottom square of the diagram (4.1). 
By \cite[Theorem 8.6 (1) and (2)]{K-M-N}, we see that there is a $(C_{*}(M)^{\otimes 2})^{\vee}$-module map $h$ such that the diagrams $$
{\small 
\xymatrix@C30pt@R15pt{
C^{*}(N_{1}^{2})\otimes^{{\mathbb L}} _{C^{*}(M^{4})}C^{*}(M^{I}\times M^{I}) \ar[r]^-{(f\times f)^{!}\otimes 1} \ar[d]_{{\rm EZ}^{\vee}\otimes _{{\rm EZ}^{\vee}}{\rm EZ}^{\vee}} & C^{*}(N_{2}^{2})\otimes^{{\mathbb L}} _{C^{*}(M^{4})}C^{*}(M^{I}\times M^{I}) \ar[d]^-{{\rm EZ}^{\vee}\otimes _{{\rm EZ}^{\vee}}{\rm EZ}^{\vee}} \\
(C_{*}(N_{1})^{\otimes 2})^{\vee}\otimes^{{\mathbb L}} _{(C_{*}(M^{2})^{\otimes 2})^{\vee}} (C_{*}(M^{I})^{\otimes 2})^{\vee} \ar[r]^-{h\otimes 1} & (C_{*}(N_{2})^{\otimes 2})^{\vee}\otimes ^{{\mathbb L}} _{(C_{*}(M^{2})^{\otimes 2})^{\vee}} (C_{*}(M^{I})^{\otimes 2})^{\vee} \\
C^{*}(N_{1})^{\otimes 2}\otimes ^{{\mathbb L}}_{C^{*}(M)^{\otimes 2}}C^{*}(M^{I})^{\otimes 2} \ar[u]^-{\Theta \otimes _{\Theta }\Theta  } \ar[r]_-{(-1)^{d_{1}(d_{2}-d_{1})}(f^{!}\otimes f^{!})\otimes (1\otimes 1)}& C^{*}(N_{2})^{\otimes 2}\otimes ^{{\mathbb L}}_{C^{*}(M)^{\otimes 2}}C^{*}(M^{I})^{\otimes 2}\ar[u]_-{\Theta \otimes _{\Theta }\Theta  }\\
(C^{*}(N_{1})\otimes ^{{\mathbb L}}_{C^{*}(M^{2})}C^{*}(M^{I}) ) ^{\otimes 2} \ar[u]^-{\top } 
\ar[r]_-{(-1)^{d_{1}(d_{2}-d_{1})}(f^{!}\otimes 1)^{\otimes 2}}& (C^{*}(N_{2})\otimes ^{{\mathbb L}}_{C^{*}(M^{2})}C^{*}(M^{I}) ) ^{\otimes 2} 
\ar[u]_-{\top }
}
}
$$
are homotopy commutative. 
Therefore, we have (i) by combining the commutative sqaures mentioned above. \\
(ii) We recall the isomorphism $\theta_{i}^R =[N_i]\cap \text{-} : {H}^*(N_i) \to {H}^*(N_i)^\vee$ of 
{\it right} $H^*(N_i)$-modules with lower degree $\dim N_i$ 
which Poincar\'e duality on $H^*(N_i)$ gives. Since the diagram
$$
\xymatrix@C25pt@R15pt{
H^{*}(N_{1}) \ar[r]^-{H(f^{!})} \ar[d]_-{\theta_{1}^R} & H^{*}(N_{2}) \ar[d]^-{\theta_{2}^R}\\
H^{*}(N_{1})^{\vee } \ar[r]^-{H(f)^{\vee}} & H^{*}(N_{2})^{\vee},
}
$$
is commutative, by \cite[Lemma 10.6]{K-M-N}, we have a commutative diagram 
$$
\xymatrix@C75pt@R15pt{
H^{*}(N_{1})^{\vee} & H^{*}(N_{2})^{\vee} \ar[l]_-{H(f^{!})^{\vee}}\\
H^{*}(N_{1})^{\vee \vee } \ar[u]^-{(\theta_{1}^R)^{\vee}}  & H^{*}(N_{2})^{\vee \vee} \ar[u]_-{(\theta_{2}^R)^{\vee}} \ar[l]_-{(-1)^{d_{2}(d_{2}-d_{1})}H(f)^{\vee \vee}}\\
H^{*}(N_{1}) \ar@/^15mm/[uu]^-{\theta_{H^*(N_1)}} \ar[u]^-{\cong }  & H^{*}(N_{2}), 
\ar@/_15mm/[uu]_-{\theta_{H^*(N_2)}} \ar[l]_-{(-1)^{d_{2}(d_{2}-d_{1})}H(f)} \ar[u]_-{\cong }
}
$$
which is the dual to the above square. This enables us to obtain a commutativity of the diagram 
$$
\xymatrix@C60pt@R15pt{
HH^{*}(H^{*}(M),H^{*}(N_{2})) \ar[r]^{(-1)^{d_{2}(d_{2}-d_{1})}HH(1,H(f))}  \ar[d]_-{{\rm Hom}_{1}(1,\theta_{2})} & HH^{*}(H^{*}(M),H^{*}(N_{1})) \ar[d]^-{{\rm Hom}_{1}(1,\theta_{1})}\\
H({\rm Hom}_{H^{*}(M^{2})}({\mathbb B}, H^{*}(N_{2})^{\vee})) \ar[r]^{H({\rm Hom}_{1}(1,(H(f^{!}))^{\vee}))} \ar[d]_-{\iota_{*}}& H({\rm Hom}_{H^{*}(M^{2})}({\mathbb B}, H^{*}(N_{1})^{\vee})) \ar[d]^-{\iota_{*}} \\
H({\rm Hom}_{{\mathbb K}}(H^{*}(N_{2})\otimes _{H^{*}(M^{2})}{\mathbb B}),{\mathbb K}) \ar[r]^-{H({\rm Hom}_{1}(H(f^{!})\otimes 1,1))} \ar[d]_-{\cong } & H({\rm Hom}_{{\mathbb K}}(H^{*}(N_{1})\otimes _{H^{*}(M^{2})}{\mathbb B}),{\mathbb K}) \ar[d]^-{\cong } \\
(H(H^{*}(N_{2})\otimes _{H^{*}(M^{2})}{\mathbb B}))^{\vee} \ar[r]^{(H(f^{!})\otimes 1)^{\vee}} \ar[d]_-{\cong }& (H(H^{*}(N_{1})\otimes _{H^{*}(M^{2})}{\mathbb B}))^{\vee} \ar[d]^-{\cong }\\
({\rm Tor}_{H^{*}(M^{2})}(H^{*}(N_{2}),H^{*}(M)))^{\vee } \ar[r]^{{\rm Tor}_{1}(H(f^{!}),1)} & ({\rm Tor}_{H^{*}(M^{2})}(H^{*}(N_{1}),H^{*}(M)))^{\vee }. 
}
$$
We have the assertion (ii). 
\end{proof}

We recall the product $m_i$ on the loop homology ${\mathbb H}_{*}(L_{\alpha _{i}}M)=H_{*+d_{i}}(L_{\alpha _{i}}M)$ defined by 
$$
m_{i}(a\otimes b) = (-1)^{d_{i}(|a|+d_{i})}(Dlp)^{\vee}\eta (a\otimes b),
$$
where $\eta :H_{*}(L_{\alpha _{i}}M)^{\otimes 2} \cong (H^{*}(L_{\alpha _{i}}M)^{\vee})^{\otimes 2} \rightarrow (H^{*}(L_{\alpha _{i}}M)^{\otimes 2})^
{\vee}$ is the natural isomorphism.
\begin{prop}\label{prop:alg_maps2}
The map $\widetilde{F^{!}} = (-1)^{d_{1}(d_{2}-d_{1})}(F^{!})^\vee : {\mathbb H}_{*}(L_{\alpha _{2}}M)\to {\mathbb H}_{*}(L_{\alpha _{1}}M)$ is an algebra 
map.
\end{prop}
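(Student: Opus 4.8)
The plan is to deduce Proposition~\ref{prop:alg_maps2} directly from Proposition~\ref{prop:alg_maps}(i) by dualizing and tracking the sign conventions introduced in the definition of the products $m_i$. The essential point is that $\widetilde{F^!}$ is nothing but the $\K$-linear dual of the shriek map $F^!$ (up to the fixed sign $(-1)^{d_1(d_2-d_1)}$), and that the loop product $m_i$ on ${\mathbb H}_*(L_{\alpha_i}M)$ is itself defined as the dual of the dual loop product $Dlp$ via the natural isomorphism $\eta$. So the whole statement is obtained by passing to $\K$-duals in the commutative square of Proposition~\ref{prop:alg_maps}(i) and then checking that the signs conspire correctly.

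Concretely, I would first write out what it means for $\widetilde{F^!}$ to be an algebra map: we must show that for $a, b \in {\mathbb H}_*(L_{\alpha_2}M)$ one has $\widetilde{F^!}(m_2(a\otimes b)) = m_1(\widetilde{F^!}(a)\otimes \widetilde{F^!}(b))$. Substituting the definition $m_i(a\otimes b) = (-1)^{d_i(|a|+d_i)}(Dlp)^\vee \eta(a\otimes b)$ on each side turns this into an identity between composites of dual maps, the natural isomorphism $\eta$, and $(F^!)^\vee$. The key input is the commutative diagram in Proposition~\ref{prop:alg_maps}(i), which says
$$
Dlp \circ F^! = (-1)^{d_1(d_2-d_1)} (F^!\otimes F^!)\circ Dlp.
$$
Dualizing this identity and inserting the isomorphism $\eta$, which intertwines $(F^!\otimes F^!)^\vee$ with $\widetilde{F^!}\otimes\widetilde{F^!}$ (up to a Koszul sign coming from the degree $d_2-d_1$ of $F^!$), yields the desired compatibility between $\widetilde{F^!}$ and the products $m_1$, $m_2$.

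The main obstacle I expect is the bookkeeping of Koszul signs. There are several sources: the defining sign $(-1)^{d_i(|a|+d_i)}$ in $m_i$, which differs between the source and target because the relevant dimensions are $d_2$ and $d_1$; the sign $(-1)^{d_1(d_2-d_1)}$ built into $F^!$ and $\widetilde{F^!}$; and the Koszul signs introduced when $\eta$ is commuted past a tensor product of two maps each of odd-or-even degree $d_2-d_1$. I would organize this by computing the total sign on each side of the desired equation as a function of $|a|$, $|b|$, $d_1$ and $d_2$, using that elements of $\widetilde{F^!}(a)$ have degree shifted by $d_2-d_1$, and then verifying that the two expressions agree modulo $2$. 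The verification that $\widetilde{F^!}$ preserves the \emph{unit} is immediate from the fact that it is dual to a module map over the cochain algebra, so the only real content is the multiplicativity, and within that, only the sign reconciliation requires care; the underlying diagrammatic commutativity is entirely supplied by Proposition~\ref{prop:alg_maps}(i).
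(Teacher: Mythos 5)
Your proposal matches the paper's own proof essentially step for step: the paper likewise takes the identity $Dlp\circ F^{!}=(-1)^{d_{1}(d_{2}-d_{1})}(F^{!}\otimes F^{!})\circ Dlp$ from Proposition~\ref{prop:alg_maps}(i), dualizes it (citing \cite[Lemma 8.6]{K-M-N} for the resulting sign $(-1)^{d_{1}(d_{2}-d_{1})+d_{2}(d_{2}-d_{1})}$), and then performs exactly the Koszul-sign bookkeeping you describe---the defining signs $(-1)^{d_{i}(|a|+d_{i})}$ of $m_{1},m_{2}$, the degree shift $d_{2}-d_{1}$ of $(F^{!})^{\vee}$, and the sign from commuting it past $\eta$---to conclude $m_{1}(\widetilde{F^{!}}\otimes\widetilde{F^{!}})=\widetilde{F^{!}}m_{2}$. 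There is no substantive difference in approach.
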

\begin{proof}
For an element $a\otimes b$ in ${\mathbb H}_{*}(L_{\alpha _{2}}M)^{\otimes 2}$, since $Dlp\circ F^{!}=(-1)^{d_{1}(d_{2}-d_{1})}(F^{!}\otimes F^{!})\circ Dlp$, it follows that  $(F^{!})^{\vee}(Dlp)^{\vee}=(-1)^{d_{1}(d_{2}-d_{1})+d_{2}(d_{2}-d_{1})}(Dlp)^{\vee}(F^{!}\otimes F^{!})^{\vee}$; 
see \cite[Lemma 8.6]{K-M-N} for the sign. Then, we see that 
\begin{align*}
&m_{1}(\widetilde{F^{!}}\otimes \widetilde{F^{!}})(a\otimes b) \\
=& m_{1}((F^{!})^{\vee}(a)\otimes (F^{!})^{\vee}(b))\\
=& (-1)^{d_{1}(|a|+d_{2}-d_{1}+d_{1})} (Dlp)^{\vee}\eta ( (F^{!})^{\vee}(a)\otimes (F^{!})^{\vee}(b) )\\
=& (-1)^{d_{1}(|a|+d_{2})+|a|(d_{2}-d_{1})} (Dlp)^{\vee}(F^{!}\otimes F^{!})^{\vee}\eta (a\otimes b)\\
=& (-1)^{d_{1}(|a|+d_{2})+|a|(d_{2}-d_{1})+d_{1}(d_{2}-d_{1})+d_{2}(d_{2}-d_{1})} (F^{!})^{\vee}(Dlp)^{\vee}\eta (a\otimes b)\\
=& (-1)^{d_{1}d_{2}-d_{1}+d_{2}(|a|+d_{2})}(F^{!})^{\vee}(Dlp)^{\vee}\eta (a\otimes b)\\
=&\widetilde{F^{!}}m_{2}(a\otimes b).
\end{align*}
This completes the proof.
\end{proof}

Let $N$ be a simply-connected Poincar\'{e} duality space of dimension $d$. Consider the commutative diagram of simply-connected spaces
\[
\xymatrix{
M_{1} \ar[rr]^-{g} & & M_{2}\\
 & N. \ar[lu]^-{\beta _{1}} \ar[ru]_-{\beta _{2}}&
}
\] 
Let $\widetilde{g}$ denote $H^*(L_Ng)$, the morphism induced in cohomology
by $L_Ng:L_N M_1\rightarrow L_N M_2$.
Then the map $\widetilde{g}$ coincides with the composite 
\[
\xymatrix@C25pt@R15pt{
H^{*}(L_{\beta _{2}}M_{2}) \ar[r]^-{{\rm EM}^{-1}} & H^{*}({\mathbb B}_{2}'\otimes _{C^{*}(M_{2}^{2})}F_{2}) \ar[r]^-{\overline{id}\otimes _{(g^{2})^{*}}(\overline{g^{I}})^{*}} & H^{*}({\mathbb B}_{1}'\otimes _{C^{*}(M_{1}^{2})}F_{1}) \ar[r]^-{{\rm EM}} & H^{*}(L_{\beta _{1}}M_{1}).
}
\]
Here ${\mathbb B}'_{i}$ is a right $C^{*}(M_{i}^{2})$-semifree resolution of $C^{*}(N)$, $F_{i}$ is a left $C^{*}(M_{i}^{2})$-semifree resolution of 
$C^{*}(M_{i}^{I})$. Moreover,  $\overline{id}$ and $(\overline{g^{I}})^{*}$ are the maps which make the diagrams  
\[
\xymatrix@C25pt@R15pt{
& 
& {\mathbb B}_{1}' \ar[d]^-{\varepsilon }_{\simeq }\\
{\mathbb B}_{2}' \ar[rr]^-{\varepsilon }_-{\simeq } \ar[rru]^-{\overline{id}} & & C^{*}(N),
}
\  \  \  \xymatrix@C25pt@R15pt{
& 
& F_{1} \ar[d]^-{\varepsilon }_-{\simeq }\\
F_{2} \ar[r]^-{\varepsilon }_-{\simeq } \ar[rru]^-{(\overline{g^{I}})^{*}} & C^{*}(M^{I}_{2}) \ar[r]^-{(g^{I})^{*}} & C^{*}(M_{1}^{I})
}
\]
commutative up to homotopy. Observe that $\overline{id}=id$ and $\overline{(g^{I})}^{*}=(g^{I})^{*}$ in the derived categories 
$\D(\text{Mod-}C^{*}(M_{2}^{2}))$ and $\D(C^{*}(M^{2}_{2})\text{-Mod})$, respectively.
\begin{prop}\label{prop:alg_maps3}
{\em (i)} The map $\tilde{g}$ is compatible with the dual loop product.\\
{\em (ii)}  The dual $(\tilde{g})^{\vee}= H_*(L_Ng): {\mathbb H}_{*}(L_{\beta _{1}}M_{1})\to {\mathbb H}_{*}(L_{\beta _{2}}M_{2})$ is an algebra map.
\end{prop}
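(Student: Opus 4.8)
The plan is to imitate the proof of Proposition~\ref{prop:alg_maps}, with the shriek map $f^{!}$ of a morphism in $\text{\bf Poincar\'e}_M$ replaced by the change-of-base map induced by $g\colon M_{1}\to M_{2}$. The decisive simplification is that here the Poincar\'e duality space $N$ is \emph{fixed} and only the target varies; consequently the diagonal shriek map $\Delta^{!}\colon C^{*}(N)\to C^{*}(N^{2})$ coming from Poincar\'e duality on $N$ is literally the same morphism on the $M_{1}$- and the $M_{2}$-side, and no analogue of the relation $\Delta^{!}f^{!}\simeq (f\times f)^{!}\Delta^{!}$ from the proof of Proposition~\ref{prop:alg_maps}(i) is needed.

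First I would invoke the relative version of \cite[Theorem 2.3]{K-M-N} to realize the dual loop product $Dlp$ on $H^{*}(L_{\beta_{i}}M_{i})$ as the composite of maps between derived tensor products $C^{*}(N)\otimes^{\mathbb L}_{C^{*}(M_{i}^{k})}C^{*}(M_{i}^{I}\times\cdots)$, exactly as in the composite displayed in the proof of Proposition~\ref{prop:alg_maps}(i). By the paragraph preceding the statement, $\tilde g=H^{*}(L_Ng)$ is the composite $\mathrm{EM}\circ H\big(\overline{id}\otimes_{(g^{2})^{*}}(\overline{g^{I}})^{*}\big)\circ\mathrm{EM}^{-1}$. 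To prove (i) I would build a homotopy-commutative ladder whose two vertical legs are the $Dlp$-composites for $M_{2}$ and for $M_{1}$ and whose rungs are the base-change maps induced by $g$, namely $\overline{id}$ on the resolutions of $C^{*}(N)$ and $(\overline{g^{I}})^{*}$ on the resolutions of the path spaces. Each rung then reduces to a naturality statement: loop composition, the projections $p_{13},\omega,\widetilde q$, the diagonals and the Eilenberg--Zilber maps are all natural in $M$, hence intertwined by the appropriate powers of $g$; for the Eilenberg--Zilber legs the relevant homotopies are supplied, as in Proposition~\ref{prop:alg_maps}(i), by \cite[Theorem 8.6]{K-M-N}. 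Chaining the resulting commutative squares yields $Dlp\circ\tilde g=(\tilde g\otimes\tilde g)\circ Dlp$, which is assertion (i).

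I expect the main obstacle to be the one rung that is \emph{not} induced by a map of spaces, namely the square involving $\Delta^{!}\otimes 1$. Here I must check that the fixed shriek $\Delta^{!}$ is compatible with restriction of scalars along $(g^{k})^{*}\colon C^{*}(M_{2}^{k})\to C^{*}(M_{1}^{k})$, that is, that $\Delta^{!}$ commutes with the $g$-base-change maps. The point to exploit is that $\Delta^{!}$ is characterized by Poincar\'e duality on $N$ through \cite[Theorems 1 and 2]{F-T}, so it is intrinsic to $N$ and independent of the target $M_{i}$; after choosing the semifree resolutions $\mathbb B_{i}'$ of $C^{*}(N)$ over $C^{*}(M_{i}^{2})$, together with their twofold analogues resolving $C^{*}(N^{2})$ over $C^{*}(M_{i}^{4})$, compatibly with $g$, the base-change maps $\overline{id}$ on the resolutions of both $C^{*}(N)$ and $C^{*}(N^{2})$ cover identities, and hence $\Delta^{!}$ commutes with them in the derived category. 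Making this chain-level comparison precise is the technical heart of the argument.

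Finally, part (ii) follows by dualizing (i), in exact parallel with the passage from Proposition~\ref{prop:alg_maps}(i) to Proposition~\ref{prop:alg_maps2}. Since $\tilde g=H^{*}(L_Ng)$ is induced by a genuine continuous map it has degree $0$, and since $N$ is fixed we have $d_{1}=d_{2}=d$; hence every sign factor of the form $(-1)^{d_{i}(d_{2}-d_{1})}$ that appears in Propositions~\ref{prop:alg_maps} and~\ref{prop:alg_maps2} becomes trivial. The relation $Dlp\circ\tilde g=(\tilde g\otimes\tilde g)\circ Dlp$ therefore dualizes directly to $(\tilde g)^{\vee}\circ(Dlp)^{\vee}=(Dlp)^{\vee}\circ\big((\tilde g)^{\vee}\otimes(\tilde g)^{\vee}\big)$, and substituting this into $m_{i}(a\otimes b)=(-1)^{d(|a|+d)}(Dlp)^{\vee}\eta(a\otimes b)$ shows that $(\tilde g)^{\vee}=H_{*}(L_Ng)$ intertwines $m_{1}$ and $m_{2}$, that is, it is an algebra map, which is (ii).
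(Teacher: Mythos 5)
Your proposal takes essentially the same route as the paper: the paper also realizes $Dlp$ via the relative version of \cite[Theorem 2.3]{K-M-N} and assembles precisely this homotopy-commutative ladder, with rungs of the form $1\otimes_{(g^{k})^{*}}(g^{I}\times\cdots)^{*}$, with \cite[Theorem 8.6 (1) and (2)]{K-M-N} supplying the homotopies (and the map $h$) for the Eilenberg--Zilber squares, and with the $\Delta^{!}\otimes 1$ square commuting because $\Delta^{!}$ is intrinsic to the fixed space $N$. Part (ii) is likewise obtained by dualizing as in Proposition \ref{prop:alg_maps2}, where, as you note, all signs are trivial since $d_{1}=d_{2}$ and $\tilde{g}$ has degree $0$.
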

\begin{proof}
The result \cite[Theorem 8.6 (1) and (2)]{K-M-N} enable us to obtain the commutative squares up to homotopy
$$
{\footnotesize
\xymatrix@C25pt@R15pt{
C^{*}(N)\otimes ^{{\mathbb L}} _{C^{*}(M_{2}^{2})}C^{*}(M^{I}_{2}) \ar[r]^-{1\otimes _{(g^{2})^{*}}(g^{I})^{*}} \ar[d]_-{p_{13}^{*}\otimes _{p_{13}^{*}}c^{*}} & C^{*}(N)\otimes ^{{\mathbb L}} _{C^{*}(M_{1}^{2})}C^{*}(M^{I}_{1}) \ar[d]^-{p_{13}^{*}\otimes _{p_{13}^{*}}c^{*}}\\
C^{*}(N)\otimes ^{{\mathbb L}}_{C^{*}(M_{2}^{3})} C^{*}(M^{I}_{2}\times _{M_{2}}M^{I}_{2}) \ar[r]^-{1\otimes _{(g^{3})^{*}}(g^{I}\times _{g}g^{I})^{*}} & C^{*}(N)\otimes ^{{\mathbb L}}_{C^{*}(M_{1}^{3})} C^{*}(M^{I}_{1}\times _{M_{1}}M^{I}_{1})\\
C^{*}(N)\otimes ^{{\mathbb L}}_{C^{*}(M_{2}^{4})} C^{*}(M^{I}_{2}\times M^{I}_{2}) \ar[r]^-{1\otimes _{(g^{4})^{*}}(g^{I}\times g^{I})^{*}}  \ar[u]^-{\omega^{*}\otimes _{\omega^{*}}\tilde{q}^{*}} \ar[d]_-{\Delta^{!}\otimes 1}& C^{*}(N)\otimes ^{{\mathbb L}}_{C^{*}(M_{1}^{4})} C^{*}(M^{I}_{1}\times M^{I}_{1}) \ar[u]_-{\omega^{*}\otimes _{\omega^{*}}\tilde{q}^{*}} \ar[d]^-{\Delta^{!}\otimes 1}\\
C^{*}(N^{2})\otimes ^{{\mathbb L}}_{C^{*}(M_{2}^{4})} C^{*}(M^{I}_{2}\times M^{I}_{2})  \ar[r]^-{1\otimes _{(g^{4})^{*}}(g^{I}\times g^{I})^{*}} \ar[d]_-{{\rm EZ}^{\vee}\otimes _{{\rm EZ}^{\vee}}{\rm EZ}^{\vee}}& C^{*}(N^{2})\otimes ^{{\mathbb L}}_{C^{*}(M_{1}^{4})} C^{*}(M^{I}_{1}\times M^{I}_{1}) \ar[d]^-{{\rm EZ}^{\vee}\otimes _{{\rm EZ}^{\vee}}{\rm EZ}^{\vee}}\\
(C_{*}(N)^{\otimes 2})^{\vee}\otimes ^{{\mathbb L}}_{(C_{*}(M_{2}^{2})^{\otimes 2})^{\vee}} (C_{*}(M^{I}_{2})^{\otimes 2})^{\vee} \ar[r]_-{h\otimes _{((g^{2})_{*}^{\otimes 2})^{\vee}}((g^{I})_{*}^{\otimes 2})^{\vee}} & (C_{*}(N)^{\otimes 2})^{\vee}\otimes ^{{\mathbb L}}_{(C_{*}(M_{1}^{2})^{\otimes 2})^{\vee}} (C_{*}(M^{I}_{1})^{\otimes 2})^{\vee} \\
C^{*}(N)^{\otimes 2}\otimes ^{{\mathbb L}} _{C^{*}(M_{2}^{2})^{\otimes 2}}C^{*}(M^{I}_{2})^{\otimes 2} \ar[r]^-{1^{\otimes 2}\otimes _{(g^{2})^{*\otimes 2}}(g^{I})^{*\otimes 2}} \ar[u]^-{\Theta \otimes _{\Theta }\Theta }& C^{*}(N)^{\otimes 2}\otimes ^{{\mathbb L}} _{C^{*}(M_{1}^{2})^{\otimes 2}}C^{*}(M^{I}_{1})^{\otimes 2} \ar[u]_-{\Theta \otimes _{\Theta }\Theta }\\
(C^{*}(N)\otimes ^{{\mathbb L}} _{C^{*}(M_{2}^{2})}C^{*}(M^{I}_{2}))^{\otimes 2} \ar[r]^-{(1\otimes _{(g^{2})^{*}}(g^{I})^{*})^{\otimes 2}} \ar[u]^-{\top }& (C^{*}(N)\otimes ^{{\mathbb L}} _{C^{*}(M_{1}^{2})}C^{*}(M^{I}_{1}))^{\otimes 2}. \ar[u]_-{\top }\\
}
}
$$
Thus the commutativity in the torsion functor yields (i). The assertion (ii) is shown by the same argument as in the proof of 
Proposition \ref{prop:alg_maps} (i). 
\end{proof}


\begin{proof}[Proof of Theorem \ref{thm:functors}]
With the same notation as in Propositions \ref{prop:alg_maps2} and \ref{prop:alg_maps3},  
we define a functor ${\mathbb H}_*(L_?M)$ by ${\mathbb H}_*(L_?M)(N)={\mathbb H}_*(L_NM)$ and 
$${\mathbb H}_*(L_?M)(f)=\widetilde{F^!} : {\mathbb H}_*(L_{N_2}M) \to 
{\mathbb H}_*(L_{N_1}M)$$ for a morphism $f : N_1 \to N_2$ in $\text{\bf Poincar\'e}_M$.  
Proposition \ref{prop:unital} implies that ${\mathbb H}_*(L_NM)$ is a unital associative algebra over $H_*(\Omega M)$. 
In fact, the based map $\ast \to M$ gives rise to an algebra map ${\mathbb H}_*(L_NM) \to {\mathbb H}_{*}(L_\ast M)=H_{*+0}(\Omega M) $; 
see Proposition \ref{prop:alg_maps2}. 
It is readily seen that ${\mathbb H}_*(L_?M)(id_N) = id_{{\mathbb H}_*(L_NM)}$. Moreover, the uniqueness of the shriek map enables us 
to deduce that  ${\mathbb H}_*(L_?M)(fg) = {\mathbb H}_*(L_?M)(g)\circ {\mathbb H}_*(L_?M)(f)$; 
see \cite[Theorems 1 and 2]{F-T} and \cite{L-S}. 
Then ${\mathbb H}_*(L_?M)$ is a well-defined functor. 

The result on the naturality of the spectral sequence follows from Proposition \ref{prop:alg_maps} (ii) 
and the proof of Theorem \ref{thm:loop_homology_ss}, namely the construction of the spectral sequence converging to the relative loop homology.  

We define a functor ${\mathbb H}_*(L_N?)$ by ${\mathbb H}_*(L_N?)(M)={\mathbb H}_*(L_NM)$ and 
${\mathbb H}_*(L_N?)(g)=(\widetilde{g})^\vee : {\mathbb H}_*(L_{N}M_1) \to 
{\mathbb H}_*(L_{N}M_2)$ for a morphism $g : M_1 \to M_2$ in $\text{\bf Top}_1^N$.  The well-definedness 
follows from Proposition \ref{prop:alg_maps3}.
\end{proof}

\section{Computational examples in the relative case} 

In this section, we determine explicitly the relative loop homology of a Poinca'e duality spaces by applying  
the EMSS in Theorem \ref{thm:loop_homology_ss} and its functoriality.   

\begin{prop}\label{prop:cal}
Let $f :  M \to K(\mathbb{Z}, 2)=BS^1$ be a map from a simply-connected Poincar\'e duality space $M$.  
Then 
$
\mathbb{H}_*(L_MK(\mathbb{Z}, 2); \K) \cong H^*(M; \K)\otimes \wedge (y)
$
as an algebra, where $\deg x\otimes y = -\deg x +1$ for $x \in H^*(M; \K)$ .
\end{prop}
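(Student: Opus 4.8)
The plan is to run the Eilenberg--Moore spectral sequence of Theorem \ref{thm:loop_homology_ss}, taking $BS^1=K(\mathbb{Z},2)$ as the simply-connected target and the Poincar\'e duality space $M$ in the role of $N$. Since $H^*(BS^1;\K)\cong\K[t]$ with $|t|=2$, the map $f$ makes $H^*(M)$ into a graded commutative $\K[t]$-algebra via $\xi:=f^*(t)\in H^2(M)$, and Theorem \ref{thm:loop_homology_ss} provides a multiplicative right-half-plane spectral sequence $\{\mathbb{E}_r^{*,*},d_r\}$ converging to $\mathbb{H}_*(L_MBS^1)$ with
$$
\mathbb{E}_2^{*,*}\cong HH^{*,*}(\K[t];H^*(M))
$$
as a bigraded algebra.

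First I would compute this $E_2$-term directly. The algebra $\K[t]$ admits the length-one Koszul bimodule resolution
$$
0\longrightarrow \K[t]\otimes\K[t]\xrightarrow{\ \cdot(t\otimes1-1\otimes t)\ }\K[t]\otimes\K[t]\longrightarrow\K[t]\longrightarrow0,
$$
which is exact because $t\otimes1-1\otimes t$ is the nonzerodivisor $t_1-t_2$ in $\K[t]\otimes\K[t]\cong\K[t_1,t_2]$. Applying $\mathrm{Hom}_{\K[t]\otimes\K[t]}(-,H^*(M))$ yields the two-term complex $H^*(M)\xrightarrow{\ \partial\ }H^*(M)$, where $\partial(v)=\xi v-(-1)^{|\xi||v|}v\xi$; as $|\xi|=2$ is even and $H^*(M)$ is graded commutative, $\partial=0$. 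Hence
$$
HH^{*,*}(\K[t];H^*(M))\cong H^*(M)\otimes\wedge(y),
$$
with $H^*(M)$ in Hochschild filtration $0$, so that $\mathrm{bideg}\,x=(0,\deg x)$, and with $y$ the class dual to the Koszul generator, of $\mathrm{bideg}\,y=(1,-2)$. The cup product restricts to the algebra structure of $H^*(M)$ on the filtration-$0$ part and to its module structure on $H^*(M)\,y$, while $y^2=0$ since the complex is concentrated in Hochschild degrees $0$ and $1$ and so $HH^{\geq2}=0$.

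Because $\mathbb{E}_2^{*,*}$ is supported in the two columns $p=0,1$, every differential $d_r$ with $r\geq2$ raises the column degree by $r\geq2$ and hence vanishes; the spectral sequence collapses at $E_2$ and $\mathbb{E}_\infty^{*,*}\cong H^*(M)\otimes\wedge(y)$ as bigraded algebras. It remains to resolve the extension problems for the resulting filtration $F^0\supseteq F^1\supseteq F^2=0$ of $\mathbb{H}_*(L_MBS^1)$. A product of two filtration-$1$ classes lies in $F^1\cdot F^1\subseteq F^2=0$, so $y^2=0$ holds on the nose; a product of a filtration-$0$ class with a filtration-$1$ class already lies in $F^1$ with $F^2=0$, hence is determined by $\mathbb{E}_\infty$ with no ambiguity. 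The only genuine issue is whether the subalgebra $\mathbb{E}_\infty^{0,*}\cong H^*(M)$ lifts to a subalgebra of $\mathbb{H}_*(L_MBS^1)$, that is, whether products of two filtration-$0$ classes pick up a correction in $F^1$.

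Settling this is the point I expect to be the crux, and I would handle it with the relative versions of Proposition \ref{prop:mor_alg} and Theorem \ref{thm:EMSSes} (valid with the Poincar\'e duality space $M$ and $L_MBS^1$ in place of $M$ and $LM$). The constant-loop section $s:M\to L_MBS^1$ induces an algebra map $s_*:\mathbb{H}_{-*}(M)\cong H^*(M)\to\mathbb{H}_{-*}(L_MBS^1)$, and by parts (ii)(b) and (ii)(c) of Theorem \ref{thm:EMSSes} the composite of $s_*$ with the projection onto $F^0/F^1\cong\mathbb{E}_\infty^{0,*}$ is precisely the identification of $\mathbb{E}_\infty^{0,*}$ with $H^*(M)$. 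Thus $s_*$ is an injective algebra map whose image meets $F^1$ trivially and maps isomorphically onto $\mathbb{E}_\infty^{0,*}$, so it lifts $H^*(M)$ to a subalgebra and no extension occurs in filtration $0$. Assembling the three cases gives $\mathbb{H}_*(L_MBS^1)\cong H^*(M)\otimes\wedge(y)$ as algebras. Finally, reading total degrees from bidegrees, an element of $\mathbb{E}_\infty^{p,q}$ sits in $\mathbb{H}_{-(p+q)}$, so $\deg(x\otimes1)=-\deg x$ and $\deg(1\otimes y)=-(1+(-2))=1$, whence $\deg(x\otimes y)=-\deg x+1$, as claimed.
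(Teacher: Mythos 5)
Your proof is correct and takes essentially the same approach as the paper: the EMSS of Theorem \ref{thm:loop_homology_ss}, collapse forced by the two-column support of the $E_2$-term, and the relative versions of Proposition \ref{prop:mor_alg} and Theorem \ref{thm:EMSSes} to settle the only genuine extension problem (lifting $H^*(M)$ to a subalgebra via the constant-loop section). The sole difference is that you compute $HH^{*,*}(\K[t];H^*(M))$ by hand from the Koszul bimodule resolution, whereas the paper quotes \cite[Proposition 2.4]{Kuri2011} for the $E_2$-term; your explicit unwinding of the extension argument is exactly what the paper's brief assertion amounts to.
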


\begin{proof}
Let $\{ \mathbb{E}_r^{*,*}, d_r \}$ be the 
spectral sequence in Theorem \ref{thm:loop_homology_ss} converging to the algebra 
$\mathbb{H}_*(L_MK(\mathbb{Z}, 2); \K)$. Then it follows from \cite[Proposition 2.4]{Kuri2011} that 
$
\mathbb{E}_2^{*,*} \cong H^*(M; \K)\otimes \wedge (y)
$
as a bigraded algebra, where $\text{bideg} \  y = (1, -2)$. We see that $\mathbb{E}_2^{p,*}=0$ for $* \geq 2$. This yields 
that 
the spectral sequence collapses at the $E_2$-term and that $xy - yx = 0$ 
and $y^2=0$ in $\mathbb{H}_*(L_MK(\mathbb{Z}, 2); \K)$ for any $x\in H^*(M)$. 
Proposition \ref{prop:mor_alg} and Theorem \ref{thm:EMSSes} enable us to solve all extension problems. The answers are trivial.  
We thus have the result. 
\end{proof}

By making use of functors ${\mathbb H}_*(L_?M)$ and ${\mathbb H}_*(L_N?)$, 
we compute the (relative) loop homology of a homogeneous space. 

Let $G$ be a simply-connected Lie group containing $SU(2)$ as a subgroup and $\pi : G \to G/SU(2)$ the projection. 
Suppose that the cohomology $H^*(G; \K)$ is isomorphic to an exterior algebra on generators with odd degree, say $\wedge(V)$. 
Moreover, we introduce the following condition (P): 

\medskip
\noindent
The map $i_* : H_3(SU(2); \K) \to H_3(G; \K)$ induced by the inclusion $i : SU(2) \to G$ is a monomorphism. 

\begin{prop}\label{prop:cal2} With the same assumption on  a Lie group $G$ as above, suppose further that 
the condition {\em (P)} holds.  Then for some decomposition $\K\{y_1\}\oplus \K\{y_2, ...., y_l\}$ of $V$,  
one has a diagram 
$$
\xymatrix@C25pt@R15pt{
{\mathbb H}_*(L(G/SU(2))) \ar[d]_{\rho:={\mathbb H}_*(L_?(G/SU(2))))(\pi)}&  \ar[l]^(0.55){\cong} \wedge(y_2', ...., y_l')\otimes \K[\nu_2^*, ..., \nu_l^*] \\
{\mathbb H}_*(L_G(G/SU(2))) & \ar[l]^(0.55){\cong} \wedge(y_1', y_2', ...., y_l')\otimes \K[\nu_2^*, ..., \nu_l^*] \\
{\mathbb H}_*(LG) \ar[u]^{\rho':={\mathbb H}_*(L_G?)(\pi))}  & \ar[l]^(0.65){\cong} \wedge(y_1', y_2', ..., y_l')\otimes \K[\nu_1^*, \nu_2^*..., \nu_l^*] 
}
$$
in which the horizontal arrows are isomorphisms of algebras, 
$\rho(y_i') = y_i'$, $\rho(\nu_i^*) = \nu_i^*$, $\rho'(y_i') = y_i'$, $\rho'(\nu_1^*) = 0$ and $\rho'(\nu_i^*) = \nu_i^*$ for $i > 1$ up to isomorphism, where 
$\deg \nu_i = \deg y_i - 1$ and $\deg y_i' = -\deg y_i$.  
\end{prop}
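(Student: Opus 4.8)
The plan is to reduce the whole diagram to the one algebra that can be computed by hand, namely ${\mathbb H}_*(LG)$, and then to propagate the answer to the two relative loop homologies using the functoriality and the naturality of the spectral sequence recorded in Theorem \ref{thm:functors}. First I would pin down the cohomology of the base and the map $\pi^*$. Writing $SU(2)=S^3$, condition (P) says that $i^*:H^3(G)\to H^3(SU(2))=\K$ is onto, so there is an indecomposable $y_1\in H^3(G;\K)$ with $i^*(y_1)$ a generator. In the Serre spectral sequence of $S^3\to G\xrightarrow{\pi}G/SU(2)$ this means the fibre class is a permanent cycle, and Leray--Hirsch then exhibits $H^*(G;\K)$ as a free module over $\pi^*H^*(G/SU(2);\K)$ on $\{1,y_1\}$. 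Since $H^*(G;\K)=\wedge(V)$ is exterior on odd generators, this yields the decomposition $V=\K\{y_1\}\oplus\K\{y_2,\dots,y_l\}$ with $H^*(G/SU(2);\K)\cong\wedge(y_2,\dots,y_l)$ and $\pi^*$ the evident inclusion $y_i\mapsto y_i$ ($i\ge 2$). All three spaces are simply-connected Poincar\'e duality spaces (homogeneous spaces of compact Lie groups), so the EMSS of Theorem \ref{thm:loop_homology_ss} applies to each.

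Next I would compute the bottom row directly. As $G$ is an H-space whose cohomology is exterior on odd generators, the argument of Theorem \ref{thm:SO} (see Remark \ref{rem:H-space}(I)) gives the collapse of its EMSS together with the resolution of the extension problems, yielding ${\mathbb H}_*(LG)\cong\wedge(y_1',\dots,y_l')\otimes\K[\nu_1^*,\dots,\nu_l^*]$ as an algebra. Using \cite[Proposition 2.4]{Kuri2011} I would identify the three ${\mathbb E}_2$-terms with the Hochschild cohomologies $HH^*(H^*(G),H^*(G))$, $HH^*(H^*(G/SU(2)),H^*(G))$ and $HH^*(H^*(G/SU(2)),H^*(G/SU(2)))$, namely
$$
\wedge(y_1,\dots,y_l)\otimes\K[\nu_1^*,\dots,\nu_l^*],\quad \wedge(y_1,\dots,y_l)\otimes\K[\nu_2^*,\dots,\nu_l^*],\quad \wedge(y_2,\dots,y_l)\otimes\K[\nu_2^*,\dots,\nu_l^*]
$$
for the bottom, middle and top respectively. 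By Propositions \ref{prop:alg_maps}(ii) and \ref{prop:alg_maps3}, the maps induced by $\rho$ and $\rho'$ on these ${\mathbb E}_2$-terms are exactly the Hochschild maps coming from $\pi^*$: on generators $\rho$ sends $y_i\mapsto y_i,\ \nu_i^*\mapsto\nu_i^*$ (so $\rho_2=\pi^*\otimes 1$ is injective, $\pi^*$ being injective), while $\rho'$ sends $y_i\mapsto y_i$, $\nu_i^*\mapsto\nu_i^*$ for $i\ge 2$ and $\nu_1^*\mapsto 0$ (so $\rho_2'$ is surjective).

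The heart of the argument is the collapse of the two relative spectral sequences and the solution of their extension problems, which cannot be read off by dimension counting because $G/SU(2)$ need not be an H-space. Here I would use naturality. In the middle sequence the classes $y_i$ sit in filtration $0$ and are permanent cycles by Remark \ref{rem:cycles}; each $\nu_i^*$ ($i\ge 2$) is the $\rho_2'$-image of the permanent cycle $\nu_i^*$ of the collapsing bottom sequence, hence is itself a permanent cycle, so the middle sequence collapses. For the top sequence, $\rho$ is an injective morphism of spectral sequences whose target (the middle) collapses; an induction on pages, in which $\rho_r\circ d_r=d_r\circ\rho_r=0$ together with the injectivity of $\rho_r$ forces $d_r=0$ on the top, shows the top sequence collapses as well.

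Finally I would solve the extension problems by transport of structure from the known algebra ${\mathbb H}_*(LG)$. The map $\rho'$ is a surjective algebra map (Proposition \ref{prop:alg_maps3}(ii)) whose kernel on the associated graded is the ideal $(\nu_1^*)$, so a standard filtration argument identifies the middle algebra with $\wedge(y_1',\dots,y_l')\otimes\K[\nu_2^*,\dots,\nu_l^*]$; dually $\rho$ is an injective algebra map (Proposition \ref{prop:alg_maps2}) whose image on the associated graded is the subalgebra $\wedge(y_2',\dots,y_l')\otimes\K[\nu_2^*,\dots,\nu_l^*]$, identifying the top algebra accordingly. The stated behaviour of $\rho$ and $\rho'$ on generators is then precisely their behaviour on ${\mathbb E}_\infty={\mathbb E}_2$ computed above. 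I expect the main obstacle to be exactly this last point, that is, upgrading the ${\mathbb E}_\infty$-level data to honest algebra isomorphisms; it is overcome not by any internal degree estimate but by the functoriality of the EMSS, which lets the collapsing, directly computable sequence for $LG$ control both relative sequences.
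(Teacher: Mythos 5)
Your overall strategy coincides with the paper's: compute the bottom row via Remark \ref{rem:H-space}, identify the three ${\mathbb E}_2$-terms with Hochschild cohomologies via \cite[Proposition 2.4]{Kuri2011} and the functoriality results, use the epimorphism induced by $\rho'$ and the monomorphism induced by $\rho$ to force collapse of the two relative spectral sequences, and transport algebra structure along these maps. Your one harmless deviation is preliminary: you get $H^*(G/SU(2))\cong\wedge(y_2,\dots,y_l)$ and $\pi^*$ from the Serre spectral sequence of $S^3\to G\to G/SU(2)$ plus Leray--Hirsch, whereas the paper reads both off from the Eilenberg--Moore spectral sequences of the fibrations $G/SU(2)\to BSU(2)\to BG$ and $G\to EG\to BG$, using that (P) makes $(Bi)^*$ surjective on $H^4$. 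Both produce the required decomposition of $V$, and your treatment of the middle row (pushing relations forward along the surjection $\rho'$ from the honestly computed ${\mathbb H}_*(LG)$) matches the paper.

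The gap is in your last step, the top row. From injectivity of $\rho$ and the equality of $Gr(\operatorname{im}\rho)$ with $\wedge(y_2',\dots,y_l')\otimes\K[\nu_2^*,\dots,\nu_l^*]$ you conclude the top algebra is ``identified accordingly,'' but a filtered subalgebra is not determined up to isomorphism by its associated graded: precisely the relations at issue, e.g.\ $(\tilde y_i')^2=0$ for a lift $\tilde y_i'$ of $y_i'$, do not follow formally. Injectivity transports relations \emph{backwards}, so you would need to know that $\rho(\tilde y_i')=y_i'+(\text{higher filtration terms})$ squares to zero in the middle algebra; when ${\rm ch}(\K)=2$ this is not a consequence of graded commutativity, and nothing in your proposal addresses it --- note that this is exactly the kind of extension problem the proposition exists to solve. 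The paper closes this hole with a different tool: as in the proof of Theorem \ref{thm:SO}, it applies Theorem \ref{thm:EMSSes} together with Proposition \ref{prop:mor_alg} to the section $s:G/SU(2)\to L(G/SU(2))$, so that the filtration-zero classes $y_i'$ acquire the honest representatives $s_*(y_i)$, and $s_*(y_i)^2=s_*(y_i^2)=0$ because $s_*$ is an algebra map out of the exterior algebra ${\mathbb H}_*(G/SU(2))\cong H^*(G/SU(2))$. Your route can in fact be repaired without the section: once the middle algebra has been identified with $\wedge(y_1',\dots,y_l')\otimes\K[\nu_2^*,\dots,\nu_l^*]$, every homogeneous element of odd total degree there squares to zero in any characteristic (each of its monomials must contain some $y_j'$, the $\nu_j^*$ being of even degree), so $\rho(\tilde y_i')^2=0$ and injectivity yields $(\tilde y_i')^2=0$; but some such argument has to be supplied, and as written your proof omits it.
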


\begin{rem}
In general, for a simply-connected compact Lie group $G$, 
the homology $H_3(G; {\mathbb Z})$ with coefficients in ${\mathbb Z}$ is torsion free and its rank coincides with 
the number of simple factors of $G$; see \cite[Theorem 6.4.17]{Mimura-Toda} for example. 

The result \cite[Theorem 6.6.23]{Mimura-Toda} asserts that for any compact, simply-connected simple Lie group $G$ there exists an inclusion 
$SU(2) \to G$ such that the induced map $i_* : H_3(SU(2); {\mathbb Z}) \to H_3(G; {\mathbb Z} )$ is an isomorphism and then 
the condition (P) holds. 
On the other hand, as seen in \cite[p. 767]{Mccleary-Ziller}, there exist  Lie groups containing $SU(2)$ as a subgroup such that the induced map $i_* : H_3(SU(2); {\mathbb Z}) \to H_3(G; {\mathbb Z} )$ is multiplication 
by an integer greater than one.  
Thus we see that the condition (P) does not necessarily hold.   
\end{rem}

\begin{proof}[Proof of Proposition \ref{prop:cal2}]
Let $\{E_r, d_r\}$ and $\{E_r', d_r'\}$ be the EMSS's associated with the fibrations 
$G/SU(2) \to BSU(2) \to BG$ and $G \to EG \to BG$, respectively. We have a morphism of fibrations 
$$
\xymatrix@C25pt@R15pt{
G \ar[r] \ar[d]_{\pi} & EG \ar[r] \ar[d] &  BG \ar@{=}[d] \\
G/SU(2) \ar[r] & BSU(2) \ar[r]_{Bi} & BG . 
}
$$
This induces a morphism $\{f_r\}$ of spectral sequences from $\{E_r, d_r\}$ to $\{E_r', d_r'\}$.
Since the condition (P) holds, it follows that $(Bi)^* : H^4(BG; \K) \to H^4(BSU(2); \K)$ is an epimorphism.  
Therefore there exists a decomposition $\K\{y_1\}\oplus \K\{y_2, ...., y_l\}$ of $V$ such that, as bigraded algebras 
$$E_2^{*,*}\cong \text{Tor}_{H^*(BG)}(\K, H^*(BSU(2)))\cong \wedge (y_2, ...., y_l), $$ 
$${E_2'}^{*,*}\cong \text{Tor}_{H^*(BG)}(\K, \K)\cong \wedge (y_1, y_2, ...., y_l) $$
and $ f_2(y_i) = y_i$.  
The algebra generators of the $E_2$-term in both the spectral sequences are in the second line.  This implies that 
$$H^*(G/SU(2))\cong \wedge (y_2, ...., y_l), $$
$$H^*(G)\cong \wedge (y_1, y_2, ...., y_l)$$
and that $\pi^*(y_i) = y_i$. 
Let $\{{\mathbb E}_r, d_r\}$, $\{{\mathbb E}_r', d_r'\}$ and $\{{\mathbb E}_r'', d_r''\}$ be the spectral sequences converging to 
the loop homology ${\mathbb H}_*(L(G/SU(2)))$, ${\mathbb H}_*(LG)$ and ${\mathbb H}_*(L_G(G/SU(2)))$ in 
Theorem \ref{thm:loop_homology_ss}, respectively. Theorem \ref{thm:functors} (2) and the proof of \cite[Proposition 2.4]{Kuri2011} yield the commutative diagram 
$$
\xymatrix@C15pt@R15pt{
{\mathbb E}_2 \ar[d]_{g_2} &  \ar[l]^(0.85){\cong}  HH^*(H^*(G/SU(2)), H^*(G/SU(2)))\ar@{=}[r]  \ar[d]^{HH(1, \pi^*)} 
     &\wedge(y_2', ...., y_l')\otimes \K[\nu_2^*, ..., \nu_l^*]   \\
{\mathbb E}_2'' & \ar[l]^(0.8){\cong} HH^*(H^*(G/SU(2)), H^*(G))\ar@{=}[r]&\wedge(y_1', y_2', ...., y_l')\otimes \K[\nu_2^*, ..., \nu_l^*] 
   \\
{\mathbb E}_2' \ar[u]^{g_2'}  & \ar[l]^(0.8){\cong}  HH^*(H^*(G), H^*(G))\ar@{=}[r] \ar[u]_{HH(\pi^*, 1)} 
 &\wedge(y_1', y_2', ..., y_l')\otimes \K[\nu_1^*, \nu_2^*..., \nu_l^*] 
}
$$   
for which $g_2(y_i') = y_i'$, $g_2(\nu_i^*) = \nu_i^*$, $g_2'(y_i') = y_i'$, $g_2'(\nu_1^*) = 0$ and $g_2'(\nu_i^*) = \nu_i^*
$ for $i > 1$, 
where $\text{bideg} \ \nu_i = (1, -\deg y_i )$ and $\text{bideg} \ y_i' = (0, \deg y_i)$.  
It follows from Remark \ref{rem:H-space}  that $\{{\mathbb E}_r', d_r'\}$ collapses at the $E_2$-term 
and that there is no extension problem on the $E_\infty$-term. 
This implies that $\{{\mathbb E}_r'', d_r''\}$ collapses at the $E_2$-term and that $y_i^2=0$ in 
${\mathbb H}_*(L_G(G/SU(2)))$.   Moreover, we see that there is no extension problem for 
commutativity and relations between generators since $g_2'$ is an epimorphism.

Since the map $g_2$ is a monomorphism, it follows that  $\{{\mathbb E}_r, d_r\}$ collapses at the $E_2$-term.  
Moreover, the same argument as in the proof of Theorem  \ref{thm:SO} with Theorem \ref{thm:EMSSes} 
yields that there is no extension problem on the $E_\infty$-term of $\{{\mathbb E}_r, d_r\}$. 
This completes the proof.  
\end{proof}


We conclude this section with a comment on the relative loop homology.  
\begin{rem}
The result \cite[Proposition 6.1]{Naito} due to the third author asserts that the relative loop product is not graded commutative in general. 
On the other hand, the explicit calculation shows that 
the loop product on $\mathbb{H}_*(L_MK(\mathbb{Z}, 2); \K)$ is graded commutative for any 
map $f :  M \to K(\mathbb{Z}, 2)=BS^1$ from a simply-connected Poincar\'e duality space; 
see Proposition \ref{prop:cal}. 

For instance, consider the inclusion 
$\mathbb{C}P^n \to K(\mathbb{Z}, 2)$. The result above says that the algebra structure of the loop homology  
$\mathbb{H}_*(L_{\mathbb{C}P^n}K(\mathbb{Z}, 2))$ is comparatively simple than that of 
$L{\mathbb{C}P^n}$; see Theorems \ref{thm0.1}, \ref{thm0.2} and \ref{thm0.3}. 

Since $K(\mathbb{Z}, 2)=BS^1$ is a Gorenstein space of dimension $-1$, the shifted homology 
$\mathbb{H}_*(LK(\mathbb{Z}, 2))=H_{*-1}(LK(\mathbb{Z}, 2))$ 
is endowed with the loop product as mentioned above. However, results \cite[Theorem 4.5 (i)]{Tamanoi:stabletrivial} 
and \cite[Theorem 2.1]{K-L} assert that the loop product on $K(\mathbb{Z}, 2)$ is trivial. It seems that 
the homology invariant, the loop product captures notably variations of the spaces in which whole loops or 
their stating points move. 
\end{rem}


\end{document}